\newtheorem{theorem}{Theorem}[section]
\newtheorem{proposition}[theorem]{Proposition}
\newtheorem{lemma}[theorem]{Lemma}
\newtheorem{corollary}[theorem]{Corollary}
\newtheorem{conjecture}[theorem]{Conjecture}
\theoremstyle{definition}
\newtheorem{definition}[theorem]{Definition}
\newtheorem{remark}[theorem]{Remark}
\newtheorem{question}[theorem]{Question}
\newtheorem{property}[theorem]{Property}
\newtheorem*{property*}{Property}
\newtheorem*{problem}{Problem}
\newcommand\restr[2]{{
  \left.\kern-\nulldelimiterspace
  #1 
  \vphantom{\big|} 
  \right|_{#2}
  }}
\newcommand{\lcm}{\textup{lcm}}
\newcommand{\bigO}{\mathcal{O}}
\DeclareSymbolFont{AMSb}{U}{msb}{m}{n}
\title[Effective non-vanishing for WCI of low codimension]{Effective non-vanishing for weighted complete intersections of low codimension}
\author{Alessandro Passantino }
\date{\today}
\begin{document}

\begin{abstract}
    We show that on quasi-smooth weighted complete intersections of codimension at most 3, any ample Cartier divisor $H$ such that $H-K_X$ is ample admits a nontrivial global section. This is done by proving a generalisation of a numerical conjecture formulated by Pizzato, Sano and Tasin, which relates the existence of global sections of $H$ to the Frobenius number of the numerical semigroup generated by the weights of the ambient projective space.
\end{abstract}

\address{Università degli Studi di Pavia, Dip. di Matematica, Via Ferrata 5, 27100 Pavia, Italy.}
\email{a.passantino@campus.unimib.it}

\maketitle

\thispagestyle{empty}
\section{Introduction}
Throughout the paper, we work over the field $\mathbb{C}$ of complex numbers.

The main goal of this work is to investigate the following conjecture, in the particular case of weighted complete intersections (WCI for short) of general type.

       \begin{conjecture}[Ambro-Kawamata; {\cite[Conjecture 1]{kaw00}}]
       \label{conj:kawamata}
    Let $(X,\Delta)$ be a klt pair, $H$ an ample Cartier divisor such that $H-K_X-\Delta$ is ample. Then, $H^0(X,H) \neq 0$.
    \end{conjecture}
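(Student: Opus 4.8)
The statement in full generality is open, so the plan is to establish it in the setting of this paper: let $X = X_{d_1,\dots,d_c}\subseteq \mathbb{P}(a_0,\dots,a_n)$ be a quasi-smooth, well-formed weighted complete intersection of general type and of codimension $c\le 3$, take $\Delta=0$, and let $H$ be an ample Cartier divisor with $H-K_X$ ample. The first step is a purely geometric reduction. Since $X$ is quasi-smooth and well-formed it is projectively normal and arithmetically Gorenstein, so its homogeneous coordinate ring is $R/I$ with $R=\mathbb{C}[x_0,\dots,x_n]$, $\deg x_i=a_i$, and $I=(f_1,\dots,f_c)$; consequently $h^0(X,\mathcal{O}_X(m))=\dim_{\mathbb{C}}(R/I)_m=[t^m]P(t)$, where $P(t)=\frac{\prod_{j}(1-t^{d_j})}{\prod_i(1-t^{a_i})}$. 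Adjunction gives $K_X=\mathcal{O}_X(k)$ with $k=\sum_j d_j-\sum_i a_i$. Using Grothendieck–Lefschetz to pin down the Cartier divisors, I would write $H=\mathcal{O}_X(h)$ for a positive integer $h$ subject to the divisibility forced by the quotient singularities, and translate the hypothesis that $H-K_X$ is ample into the single inequality $h>k$. This reduces the conjecture to the numerical assertion that $[t^h]P(t)>0$ whenever $h>k$ and $\mathcal{O}_X(h)$ is Cartier. (Kawamata–Viehweg vanishing for the klt pair identifies this coefficient with $\chi(X,\mathcal{O}_X(h))$, which I would keep in reserve as an alternative bookkeeping device.)

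Next I would pass to the combinatorics of the denumerant. Writing $N_S(m)=[t^m]\prod_i(1-t^{a_i})^{-1}$ for the number of representations of $m$ as a non-negative combination of the weights, expansion of $P$ gives $[t^h]P(t)=\sum_{T\subseteq\{1,\dots,c\}}(-1)^{|T|}N_S\!\big(h-\sum_{j\in T}d_j\big)$, an alternating sum of $2^c\le 8$ terms. The link with the Frobenius number $g=g(a_0,\dots,a_n)$ of the semigroup $S=\langle a_0,\dots,a_n\rangle$ is that $N_S(m)\ge 1$ exactly when $m\in S$, so $N_S$ is supported above the largest gap $g$ and vanishes on the negative integers. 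The task then becomes the generalised Pizzato–Sano–Tasin statement: the even-cardinality (positive) contributions dominate the odd-cardinality ones once $h>k$, with the comparison governed by the position of each shifted argument $h-\sum_{j\in T}d_j$ relative to $g$.

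To run this for $c\le 3$ I would exploit three ingredients: the quasi-smoothness constraints, which force each weight to divide some degree or to appear in a suitable monomial and thereby bound the $d_j$ in terms of the $a_i$; the Gorenstein symmetry $[t^m]P(t)=[t^{k-m}]P(t)$ (equivalently Serre duality), which lets me reflect the problem to a range where one argument is provably large; and the quasi-polynomial structure of $N_S$, that is, control of its second differences across the shifts by the $d_j$. The argument then splits into cases according to how many of the eight arguments fall below $g$. \textbf{The main obstacle} is precisely this signed cancellation in codimension $3$: unlike the case $c=1$, where $N_S(h)\ge N_S(h-d_1)$ follows from a direct injection of representations, the eight-term sum requires genuinely controlling the fluctuations of $N_S$ about its polynomial average, and it is here that the sharpest input on the Frobenius number is needed. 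A secondary difficulty is the low-dimensional range ($\dim X\le 2$), where Grothendieck–Lefschetz no longer determines the Cartier divisors and the Cartier condition must be analysed directly.
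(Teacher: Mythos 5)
What you are asked to prove is stated in the paper only as a conjecture; the paper establishes exactly the special case you restrict to (well-formed quasi-smooth WCIs of codimension at most $3$, not linear cones, with $\Delta=0$), so your framing is reasonable. However, your route both misses the simplification on which the paper's proof rests and leaves the essential step unproved. The paper never computes $h^0(X,\mathcal{O}_X(m))=\dim A_m$ via the Hilbert series and the alternating sum of denumerants. It only needs $A_m\neq 0$, and for that it suffices to exhibit a single monomial of weighted degree $m$ whose restriction to $X$ is nonzero; by Property \ref{property:WCI}(iv) this reduces effective non-vanishing to the semigroup membership $m\in\langle a_0,\ldots,a_n\rangle$, i.e.\ to Conjecture \ref{conj:WCI}, and hence to the single inequality $\delta(d;a)\geq F^h(a_0,\ldots,a_n)$ for $h$-regular pairs (Corollary \ref{cor:maincor}). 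That inequality is the entire content of the paper and is proved by a case analysis on pairs, using the reduction to coprime degrees (Lemma \ref{lemma:primered}), the reduction from $h$-regular to regular pairs (Proposition \ref{prop:noasterisk}), and the recursive Frobenius bounds (Lemmas \ref{lemma:recfrob1g}--\ref{lemma:recfrob2g}). Your plan instead requires proving positivity of $\sum_{T}(-1)^{|T|}N_S\big(h-\sum_{j\in T}d_j\big)$, and you explicitly concede that you cannot control the cancellation when $c=3$; since that is precisely where all the difficulty lives, the proposal does not contain a proof. (It would also prove the strictly stronger statement that the Hilbert function is positive, which is more than is needed.)

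Two of your auxiliary ingredients are also flawed. The claimed ``Gorenstein symmetry'' $[t^m]P(t)=[t^{k-m}]P(t)$ is false for the full Hilbert series of a positive-dimensional graded ring: $[t^m]P(t)=h^0(X,\mathcal{O}_X(m))$ grows like $m^{\dim X}$ and cannot be symmetric about $k/2$; Serre duality relates $h^0(\mathcal{O}_X(m))$ to $h^{\dim X}(\mathcal{O}_X(\delta-m))$, not to $h^0(\mathcal{O}_X(\delta-m))$, so the reflection trick you propose is not available. Second, identifying Cartier divisors with $\mathcal{O}_X(h)$ for $h$ a multiple of the Picard generator, and ampleness of $H-K_X$ with $h>\delta$, is legitimate only for $\dim X\geq 3$ (Property \ref{property:WCI}(ii)); the paper handles this not by analysing low dimensions directly but by proving the purely numerical statement for all $h$-regular pairs with $c\leq n$, a strictly larger class than the pairs arising from WCIs of dimension at least $3$, while the surface case of Conjecture \ref{conj:kawamata} is already covered by Kawamata's original result.
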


    This conjecture originates from a work on Fano varieties of Ambro \cite{ambro99}; in \cite{kaw00}, where it is first stated, it is proved for varieties of dimension two. To date, the conjecture remains for the most part open, even though a few individual cases have been studied \cite{bh10,horing12,cj16,tasin,jy24}.

    For the present work, we are interested in the setting of \cite{tasin}: there, the authors are able to prove the Ambro-Kawamata conjecture for Fano or Calabi-Yau quasi-smooth weighted complete intersections. For the case of general type, it is proved only in codimension one; codimension two has recently been proved as well in \cite{jy24}.

    The main result of this work is that the conjecture holds for any quasi-smooth weighted complete intersection of codimension at most 3 which is not a linear cone.
    
    \begin{theorem}[=Theorem \ref{thm:main}]
    \label{thm:intromain}
        Let $X \subset \mathbb{P}(a_0,\ldots,a_n)$ be a well-formed quasi-smooth weighted complete intersection which is not a linear cone, such that $\mathrm{codim}X \leq 3$. Then, for any ample Cartier divisor $H$ on $X$ such that $H - K_X$ is ample, $H^0(X, H) \neq 0$.
    \end{theorem}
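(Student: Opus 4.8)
The plan is to convert the cohomological statement into a purely numerical assertion about the Hilbert series of $X$, and then to establish that assertion in codimension at most $3$.

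First I would record the standard structure of a quasi-smooth well-formed WCI $X = X_{d_1,\dots,d_c}\subset\mathbb{P}(a_0,\dots,a_n)$ with $c=\mathrm{codim}\,X\le 3$. Adjunction gives $K_X=\mathcal{O}_X(k)$ with $k=\sum_i d_i-\sum_j a_j$. For $\dim X\le 2$ the conjecture is already known by \cite{kaw00}, so I assume $\dim X\ge 3$; there the class group is generated by $\mathcal{O}_X(1)$, so every ample divisor is $H=\mathcal{O}_X(h)$ with $h>0$, and the hypothesis $H-K_X$ ample becomes simply $h>k$. Since a quasi-smooth WCI of positive dimension is projectively normal and satisfies $H^i(X,\mathcal{O}_X(m))=0$ for $0<i<\dim X$, one has that $h^0(X,\mathcal{O}_X(h))$ equals the coefficient of $t^h$ in the Hilbert series
\[
P_X(t)=\frac{\prod_{i=1}^{c}(1-t^{d_i})}{\prod_{j=0}^{n}(1-t^{a_j})}.
\]
Thus the theorem reduces to the inequality $[t^h]P_X(t)>0$ for every $h>k$ at which $\mathcal{O}_X(h)$ is Cartier.

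Next I would set up the numerical framework. Let $\Gamma=\langle a_0,\dots,a_n\rangle$ be the numerical semigroup generated by the weights, with denumerant $p(m)=\#\{\,(c_0,\dots,c_n)\in\mathbb{Z}_{\ge 0}^{\,n+1}:\sum_j c_j a_j=m\,\}$ and Frobenius number $g$. Expanding $1/\prod_j(1-t^{a_j})=\sum_m p(m)\,t^m$ and clearing the numerator by inclusion–exclusion gives
\[
[t^h]P_X(t)=\sum_{S\subseteq\{1,\dots,c\}}(-1)^{|S|}\,p\!\left(h-\textstyle\sum_{i\in S}d_i\right).
\]
The Cartier hypothesis, analysed locally along the singular strata of $X$ (well-formedness controls which coordinate strata $X$ meets), translates into divisibility constraints on $h$; these are precisely what will exclude the sporadic $h$ at which the alternating sum could vanish. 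I would then formulate the generalised Pizzato–Sano–Tasin statement as the claim that, under these constraints and for $h>k$, the displayed signed sum of denumerants is strictly positive, and note that its truth in codimension $c$ yields the theorem.

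I would prove the numerical statement by splitting into two regimes governed by the Frobenius number. When the most-subtracted argument is large, i.e.\ $h>g+\sum_i d_i$, the denumerant $p$ agrees with an explicit quasi-polynomial and the signed sum is dominated by its top-degree (volume) term, hence positive; the genuinely new input relative to codimension $\le 2$ is a monotonicity/convexity estimate for the second and third differences $p(m)-p(m-d_i)$, which I would obtain by induction on $c$, rewriting the codimension-$3$ sum as iterated differences of codimension-$\le 2$ expressions already under control. The complementary window $k<h\le g+\sum_i d_i$ is bounded in $h$ but ranges over unboundedly many weight systems; here I would argue uniformly, using the Cartier divisibility — which forces $h$ to sit deep inside $\Gamma$ — together with the quasi-smoothness and well-formedness inequalities among the $a_j$ and $d_i$ to exhibit an explicit monomial of degree $h$ surviving modulo the $F_i$, falling back on boundedness and a finite classification for the residual small-$k$ configurations close to the Calabi–Yau or Fano range.

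The main obstacle I anticipate is exactly this low-$k$ boundary: the asymptotic Frobenius estimate leaves no slack, so positivity becomes a delicate finite-type check in which the three interacting degrees $d_1,d_2,d_3$ must be treated simultaneously, and in which the Cartier divisibility is used in an essential way to rule out the finitely many $h$ where a bare WCI would genuinely have $[t^h]P_X(t)=0$. Making the combinatorial inequalities uniform across all of codimension $3$ — rather than verifying an unbounded list of weight systems one by one — is the real content of the argument, and is where I expect the bulk of the work to lie.
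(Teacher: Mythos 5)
Your reduction of the theorem to a numerical statement is sound and runs parallel to the paper's: the identification $H^0(X,\mathcal{O}_X(m))\simeq A_m$ (Property \ref{property:WCI}(iv)) means that nonvanishing is equivalent to positivity of the coefficient of $t^m$ in the Hilbert series, or more simply to membership of $m$ in the semigroup $\langle a_0,\dots,a_n\rangle$, and the paper formulates this as $\delta(d;a)\geq F^h(a_0,\dots,a_n)$ (Conjectures \ref{conj:WCI} and \ref{conjecture:h-frob}). Up to that point your proposal and the paper agree in substance.

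The genuine gap is that you do not prove the numerical statement; you only describe two regimes and defer the decisive one. Concretely: (1) Your large-$h$ regime ($h>g+\sum d_i$) is vacuous for this problem, since the claim is precisely that positivity already holds for all $h>k=\delta$, and $\delta$ can be far below $g+\sum d_i$; the entire difficulty lives in your ``complementary window,'' which you acknowledge ranges over unboundedly many weight systems and for which you offer only ``boundedness and a finite classification'' with no mechanism to make it finite. (2) You never invoke the structural consequence of quasi-smoothness that makes the bound provable at all, namely the divisibility constraints of Proposition \ref{prop:WCI} and Corollary \ref{corollary:h-regular}: for every subset of weights with $\gcd$ $a_I>1$, either $a_I$ divides as many distinct degrees as there are weights in $I$, or $a_I\mid h$. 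Without this ($h$-regularity, Definition \ref{def:pair}), there is no lower bound on $\delta$ in terms of $F^h(a)$ --- one can write down degree/weight systems violating the conclusion --- so any argument that uses only ``quasi-smoothness and well-formedness inequalities'' in passing cannot close. The paper's actual work consists of the reduction from $h$-regular to regular pairs (Lemma \ref{lemma:reduction}, Proposition \ref{prop:noasterisk}), the reduction to coprime degrees (Lemma \ref{lemma:primered}), the recursive Frobenius bounds (Lemmas \ref{lemma:recfrob1g}--\ref{lemma:recfrob2g}), and the case analysis of Theorems \ref{thm:codim1}--\ref{thm:codim3}; none of this is replaced by your proposed monotonicity/convexity estimates for differences of denumerants, which are themselves unproven and, for third differences of denumerants of general numerical semigroups, not true without further hypotheses.
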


    In order to prove the statement, we follow the ideas of \cite{tasin} and translate the original conjecture on algebraic varieties into a purely numerical problem. We briefly describe the reasoning behind this approach. Consider a quasi-smooth WCI $X=X_{d_1,\ldots,d_c} \subset \mathbb{P}(a_0,\ldots,a_n)$ of dimension at least 3, defined by polynomials of degrees $d_1,\ldots,d_c$. Many properties of $X$ are determined by the degrees $d_1,\ldots,d_c$ and weights $a_0,\ldots,a_n$: for example, the canonical class $K_X$ or the existence of global sections of a divisor $D\subset X$ depend solely on the degrees and weights of $X$ (a short list of the main properties is given in Section \ref{section:preliminaries}). Furthermore,  it is known that there is a relation between the degrees of $X$ and $a_0,\ldots,a_n$ (Proposition \ref{prop:WCI}). This motivates, in \cite{tasin}, the introduction of $h$-regular pairs (Definition \ref{def:pair}). A $h$-regular pair $(d;a)$ is a pair of tuples of positive integers $d=(d_1,\ldots,d_c)$ and $a=(a_0,\ldots,a_n)$, satisfying properties that generalise the relations between degrees and weights of Proposition \ref{prop:WCI}: in fact, for a quasi-smooth WCI $X$ as before, the degrees and weights give rise to a $h$-regular pair, where $\mathcal{O}_X(h)$ is a positive generator of $\text{Pic}X$. Together with the properties of WCIs, this gives a way to translate the Ambro-Kawamata conjecture on WCIs to a numerical problem on $h$-regular pairs. As an interesting consequence of this approach, we get a deep connection to a classical problem in the theory of numerical semigroups, called the Frobenius problem. When $h=1$ (corresponding, for example, to smooth WCIs), we investigate the following.\\

\begin{conjecture}[{\cite[Conjecture 4.8]{tasin}; Conjecture \ref{conjecture:frob}}]
\label{conj:introfrob}
Let $(d;a)$ be a regular pair, $d=(d_1,\ldots,d_c)$, $a=(a_0,\ldots,a_n)$ and $\delta(d;a)= \sum_{i=1}^c d_i - \sum_{j=0}^n a_j$. Suppose that $ c \leq n$ and $a_i \neq 1$ for any $i$. Then,
$$\delta(d;a) \geq F(a_0,\ldots,a_n),$$
where $F(a_0,\ldots,a_n)$ is the Frobenius number of $a_0,\ldots,a_n$.
\end{conjecture}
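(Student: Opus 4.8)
The plan is to recast the inequality in semigroup-theoretic terms and then induct on the codimension $c$. Write $S = \langle a_0, \ldots, a_n\rangle$ for the numerical semigroup generated by the weights. The hypothesis $a_i \neq 1$ guarantees that $S$ is a proper subsemigroup of $\mathbb{N}$, and the well-formedness built into a regular pair forces $\gcd(a_0,\ldots,a_n)=1$, so that $F(a_0,\ldots,a_n)$ is finite. The elementary but essential observation is that $\delta(d;a) \ge F(a_0,\ldots,a_n)$ is \emph{equivalent} to the assertion that every integer $m > \delta(d;a)$ lies in $S$: indeed $F$ is by definition the largest integer outside $S$, so $\delta \ge F$ holds if and only if no gap of $S$ exceeds $\delta$. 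I would therefore aim to produce, for each $m > \delta(d;a)$, an explicit nonnegative representation $m = \sum_j c_j a_j$, using the constraints that regularity imposes between the degrees and the weights.

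First I would exploit Proposition \ref{prop:WCI} together with Definition \ref{def:pair}: for a regular pair each weight $a_i$ is tied to at least one degree through a monomial relation of the shape $m_i a_i + a_{e(i)} = d_{k(i)}$ (the numerical shadow of quasi-smoothness, i.e.\ the presence of a monomial $x_i^{m_i}x_{e(i)}$ of degree $d_{k(i)}$ among the defining equations). Summing such relations, suitably organised, should yield a lower bound on $\sum_i d_i$ in terms of $\sum_j a_j$ together with products of weights, and it is precisely these products that control the size of the Frobenius number. In the base case $c=1$ this already gives the statement, since the single degree $d$ must simultaneously cover every weight, and the resulting relations force $d - \sum_j a_j$ above the conductor of $S$.

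For the inductive step I would reduce the number of generators via the classical gluing (Johnson) reduction for Frobenius numbers: if $e = \gcd(a_{i_1},\ldots,a_{i_r})$ is the common factor of a proper subset of weights and is coprime to a remaining weight $a_\ell$, then the Frobenius number of the full system equals $e$ times the Frobenius number of the contracted system plus an explicit correction $(e-1)a_\ell$. I would arrange the peeling so that discarding the largest degree $d_c$ produces a regular pair $(d';a')$ of codimension $c-1$ whose $\delta$ drops by a controlled amount, and then match that drop against the Johnson correction term to close the induction. It is here that the reordering furnished by Proposition \ref{prop:WCI} becomes indispensable, since it is what keeps the contracted pair regular.

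The main obstacle, and the reason the full conjecture is delicate, is exactly this bookkeeping in the inductive step: one must ensure simultaneously that the contracted pair $(d';a')$ still satisfies the regularity axioms, and that the exact Johnson correction is dominated by the decrease of $\delta$, with no slack lost when several weights share common factors. In low codimension ($c \le 3$, which is all that Theorem \ref{thm:intromain} requires) this can be forced through by a finite case analysis on how the at most three degrees distribute among the weights; the genuinely general case would need a uniform argument that the accumulated correction terms never overtake $\sum_i d_i - \sum_j a_j$, and I expect the non-coprime configurations of weights to be where the difficulty concentrates.
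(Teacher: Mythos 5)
Your overall shape --- induct on the codimension, split the weight set according to common divisors, and control the Frobenius number by a gcd-based recursion --- is the same as the paper's, but two of your load-bearing steps do not hold as stated. First, a regular pair carries \emph{only} the gcd conditions of Definition \ref{def:pair}: for every subset $I$ with $a_I=\gcd_{i\in I}(a_i)>1$ there must be $|I|$ distinct degrees divisible by $a_I$. It does \emph{not} carry the monomial relations $m_ia_i+a_{e(i)}=d_{k(i)}$ of Proposition \ref{prop:WCI}; those are conditions (Q1)/(Q2) for actual quasi-smooth WCIs, and the paper explicitly notes that regular pairs need not come from WCIs. So the "summing monomial relations" step has no input to work with; what regularity actually gives you (and what the paper uses) is that each weight divides some degree and that shared factors among $k$ weights are witnessed by $k$ degrees. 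Your base case survives because for $c=1$ these gcd conditions already force the weights to be pairwise coprime divisors of $d_1$, but the mechanism you describe is not the one available.

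Second, the inductive step as you set it up does not close. Discarding a degree $d_c$ generally destroys regularity: any subset $I$ whose $|I|$ witnessing degrees included $d_c$ loses its witness. The paper instead splits \emph{both} degrees and weights into sub-pairs (e.g.\ $(d_1;a_0,\ldots,a_k)$ and $(d_2,d_3;a_l,\ldots,a_n)$ in Theorem \ref{thm:codim3}), and --- crucially --- these sub-pairs are only $h$-regular for various $h>1$, so even to prove the $h=1$ statement the induction must run through Conjecture \ref{conjecture:h-frob} and the $\frac{1}{h}$-Frobenius numbers $F^h$, together with the reduction of Proposition \ref{prop:noasterisk}. Relatedly, the Johnson/gluing identity (Lemma \ref{lemma:degred}) applies only when all but one weight share a common factor; it is used in the paper only for the preliminary reduction of Lemma \ref{lemma:primered}. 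The recursions that actually drive the case analysis are the new splitting inequalities of Lemmas \ref{lemma:recfrob1g}--\ref{lemma:recfrob2g}, of the form $F(a)\le F^g(a_{I})+F^G(a_{I^c},g)+gG$, which handle several coprime blocks at once. Without these two ingredients (passage to $h$-regular sub-pairs, and the splitting bounds in place of the Johnson formula) the bookkeeping you flag as "the main obstacle" cannot be carried out, even for $c\le 3$.
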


Given natural numbers $m_1,\ldots,m_k$ such that $\gcd(m_1,\ldots,m_k)=1$, the Frobenius number $F(m_1,\ldots,m_k)$ of $m_1,\ldots,m_k$ is the largest integer that cannot be written as a non-negative integral combination of $m_1,\ldots,m_k$; the Frobenius problem aims to compute $F(m_1,\ldots,m_k)$. Despite its apparent simplicity, a close formula for $F(m_1,\ldots,m_k)$ only exists for $k=2$, as for $k>2$ it is known that there is no general polynomial relation between $m_1,\ldots,m_k$ and $F(m_1,\ldots,m_k)$ (Theorem \ref{thm:noformula}), and the problem is known to be computationally hard; as a consequence, rather than computing $F(m_1,\ldots,m_k)$, in many cases it is more feasible to consider upper bounds on $F(m_1,\ldots,m_k)$. Then, the interest in Conjecture \ref{conj:introfrob} (and its generalisation to $h$-regular pairs, Conjecture \ref{conjecture:h-frob}) is twofold: on one hand, a positive answer would complete the proof of Conjecture \ref{conj:kawamata} for all quasi-smooth WCIs; on the other, it would improve known bounds on $F(m_1,\ldots,m_k)$ such as the one due to Brauer (Proposition \ref{prop:br}; also see \cite[Section 4.1]{tasin}). Because of this, we focus on studying $h$-regular pairs and prove the following.

\begin{theorem}[=Corollary \ref{cor:maincor}]
\label{thm:mainpairs}
Let $(d;a)=(d_1,\ldots,d_c;a_0,\ldots,a_n)$ be a $h$-regular pair such that $c \leq 3$, $c \leq n$ and $a_i \nmid h$ for all $0 \leq i \leq n$. Then, $$\delta(d;a) \geq F^h(a_0,\ldots,a_n);$$ here, $F^h(a_0,\ldots,a_n)$ is the largest multiple of $h$ that cannot be written as a non-negative integral combination of $a_0,\ldots,a_n$.
\end{theorem}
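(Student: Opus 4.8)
The inequality $\delta(d;a)\ge F^h(a_0,\ldots,a_n)$ is equivalent, by definition of $F^h$, to the assertion that every multiple of $h$ strictly larger than $\delta(d;a)$ lies in the numerical semigroup $S=\langle a_0,\ldots,a_n\rangle$. So the plan is to fix an arbitrary multiple $N=mh>\delta(d;a)$ and produce an expression $N=\sum_j \lambda_j a_j$ with $\lambda_j\in\mathbb{Z}_{\ge 0}$. The structural input I would exploit is the numerical shadow of Iano--Fletcher quasi-smoothness recorded in Definition \ref{def:pair} and Proposition \ref{prop:WCI}: for each weight $a_j$ there is a degree $d_i$ with $a_j\mid d_i$, or a degree $d_i$ together with an index $l\neq j$ such that $a_j\mid (d_i-a_l)$. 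These congruences are exactly what allow one to move between residue classes while staying inside $S$, and they will also provide the lower bounds on $\sum_i d_i$ needed to beat the Frobenius number.

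Concretely, I would order $a_0\le\cdots\le a_n$ and analyse $S$ through its Apéry set with respect to $a_0$: bounding $F^h$ amounts to bounding, within each residue class modulo $a_0$ consisting of multiples of $h$, the smallest element of $S$. The regular-pair congruences above give, for each such residue, a representative assembled from a controlled number of the $d_i$ and $a_l$, and summing these contributions yields an upper bound for the largest non-representable multiple of $h$ in terms of $\sum_i d_i$ and $\sum_j a_j$. The target inequality then collapses to a lower bound on $\sum_i d_i$ coming again from Definition \ref{def:pair}. When $h=1$ this recovers the content of Conjecture \ref{conjecture:frob}, and comparison with Brauer's estimate (Proposition \ref{prop:br}) both explains why the inequality should hold and pinpoints where $\delta(d;a)$ refines the classical bound.

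I would then run the argument by cases on $c$. For $c=1$, regularity forces the single degree $d_1$ to be congruent modulo each $a_j$ to $0$ or to another weight, and the hypersurface bound of \cite{tasin} applies almost directly. For $c=2$ and $c=3$ I would argue by reduction to lower codimension: after reordering and selecting a weight $a_{j_0}$ dividing, say, $d_c$, one passes to the sub-pair $(d_1,\ldots,d_{c-1};a')$ obtained by deleting $d_c$ and suitably adjusting the weights, verifies that it is again $h$-regular, and feeds the resulting bound back in while accounting for the deleted degree and weight. Here the hypothesis $c\le n$ guarantees that enough weights survive each deletion for the sub-pair to be well-defined, and $a_i\nmid h$ guarantees that no weight is trivial with respect to the $h$-grading — the generalisation of excluding a linear cone.

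The hard part will be the case $c=3$. Three degrees must then jointly cover all the relevant residues, and the reduction ceases to be clean when the weights share common factors with one another and with $h$: one must treat separately the sub-cases where the deleted degree is divisible by several weights, where two degrees are forced onto the same residue class, and where $\gcd(a_j,h)>1$. Confining the entire argument to the multiples of $h$, rather than to all integers, is precisely where ordinary Frobenius bookkeeping breaks down, so throughout the congruence steps I would have to work with the refined quantity $F^h$ and invoke $a_i\nmid h$ to ensure that no step leaves the lattice of multiples of $h$. I expect the genuinely new content beyond \cite{tasin} and \cite{jy24} to be exactly this combinatorial case analysis, together with the verification that each reduced sub-pair remains $h$-regular.
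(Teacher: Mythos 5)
There is a genuine gap, in two places. First, the engine of your argument --- ``the regular-pair congruences give, for each residue, a representative assembled from a controlled number of the $d_i$ and $a_l$, and summing these contributions yields an upper bound for the largest non-representable multiple of $h$'' --- is asserted rather than constructed, and the natural way to carry it out runs into a known obstruction. Note that Definition \ref{def:pair} does not give you the relations $a_j \mid (d_i - a_l)$ from Proposition \ref{prop:WCI}(Q2): an $h$-regular pair need not come from a quasi-smooth WCI, so for abstract pairs the only per-weight information you have (using $a_i \nmid h$) is that each $a_j$ divides some $d_i$, plus the subset condition on common divisors. Bounding each Ap\'ery-set element by a sum of degrees and then adding up over residue classes is precisely the mechanism behind Brauer's estimate (Proposition \ref{prop:br}), and the paper exhibits a regular pair, $(6p,6q,pq;2p,3p,2q,3q)$, whose amplitude equals $F(a)$ while every ordering of the weights fails to be telescopic --- i.e.\ the Brauer-type bound obtained by summing residue-class contributions is strictly worse than $\delta(d;a)$ there. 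So your central step, as described, proves a weaker inequality than the one you need.

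Second, the inductive reduction ``delete $d_c$, adjust the weights, feed the bound back in'' does not close without a lemma relating $F^h(a_0,\ldots,a_n)$ to the Frobenius data of the sub-collections. Knowing $\delta(d';a') \geq F^h(a')$ for a sub-pair says nothing about $F^h(a)$ unless you can glue: the paper supplies exactly this glue in the recursive bounds of Lemmas \ref{lemma:recfrob1g}--\ref{lemma:recfrob2g}, e.g.\ $F(a_0,\ldots,a_n) \leq F^g(a_0,\ldots,a_k) + F^G(a_{k+1},\ldots,a_n,g) + gG$ with $g=\gcd(a_0,\ldots,a_k)$, and these (together with Lemma \ref{lemma:primered} to strip common factors from the degrees) are what drive the case analysis of Theorems \ref{thm:codim2} and \ref{thm:codim3}. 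You also keep $h$ in play throughout, whereas the paper first proves (Proposition \ref{prop:noasterisk}, Corollary \ref{cor:noasterisk}) that for $c \leq 3$ every $h$-regular pair satisfies one of the reduction conditions of Lemma \ref{lemma:reduction}, so that only the regular case $h=1$ needs the hard combinatorics; carrying $F^h$ through the residue-class bookkeeping is an additional difficulty your sketch acknowledges but does not resolve. To repair the proposal you would need to formulate and prove gluing inequalities of the above type and replace the per-residue summation by the gcd-driven decomposition of the weight set.
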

In particular, we deduce that both Conjecture \ref{conj:introfrob} and its generalisation to $h$-regular pairs (Conjecture \ref{conjecture:h-frob}) hold for pairs with $c\leq 3$, and as a corollary, Theorem \ref{thm:intromain} follows.

The proof of Theorem \ref{thm:mainpairs} is done case-by-case, but relies on a collection of new results that either give recursive bounds on Frobenius numbers, or reduce the number and structure of the pairs that need to be considered. For the most part, these are independent from the setting of Theorem \ref{thm:mainpairs}: for instance, the bounds on Frobenius numbers do not require the use of pairs, and the reductions are independent from the number of degrees of the pair being considered. The only exception is given by the following.

\begin{theorem}[=Corollary \ref{cor:noasterisk}]
Conjecture \ref{conj:introfrob} and its generalisation to $h$-regular pairs (Conjecture \ref{conjecture:h-frob}) are equivalent for pairs with at most 3 degrees.
\end{theorem}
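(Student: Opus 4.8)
The plan is to treat the two implications separately, the trivial one first. Conjecture~\ref{conjecture:h-frob} is a statement about $h$-regular pairs for arbitrary $h$, while Conjecture~\ref{conj:introfrob} concerns regular pairs; but a regular pair is exactly a $1$-regular pair, and $F^1(a_0,\ldots,a_n)=F(a_0,\ldots,a_n)$, so the case $h=1$ of the former is literally the latter. Hence the entire content is the converse: assuming Conjecture~\ref{conj:introfrob} for every regular pair with at most three degrees, I would deduce Conjecture~\ref{conjecture:h-frob} for every $h$-regular pair $(d;a)$ with $c\le 3$, $c\le n$ and $a_i\nmid h$ for all $i$.

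The bridge I would use to compare the two Frobenius quantities is an elementary remark about numerical semigroups. Writing $S=\langle a_0,\ldots,a_n\rangle$, the set
\[
T=\{\,m\ge 0 : mh\in S\,\}
\]
is again a numerical semigroup; it contains every $a_i$ (since $a_ih\in S$), so the $\gcd$ of its elements is $1$ and its complement is finite. By construction $mh\in S$ if and only if $m\in T$, whence $F^h(a_0,\ldots,a_n)=h\cdot F(T)$ as soon as $h\notin S$; and the hypothesis $a_i\nmid h$ is precisely what excludes the degenerate subcase $h\in S$ arising from a single weight, in which the desired inequality is vacuous. This rewrites the target $\delta(d;a)\ge F^h(a)$ as a statement about an \emph{ordinary} Frobenius number, i.e.\ a genuinely $h=1$ situation.

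The heart of the argument is then to manufacture, out of $(d;a)$, an honest regular pair $(d';a')$ to which Conjecture~\ref{conj:introfrob} applies. Since the $h$-regularity conditions force each $d_j$ to be a multiple of $h$ realised by a monomial, the rescaled degrees $d_j/h$ lie in $T$; I would take $d'=(d_1/h,\ldots,d_c/h)$ and for $a'$ a generating set of $T$ (discarding any generator equal to $1$, which lands us in the hypotheses of Conjecture~\ref{conj:introfrob}), after first applying the paper's reduction lemmas — which hold for any number of degrees — to pass to a \emph{reduced} pair for which these generators can be read off explicitly. Conjecture~\ref{conj:introfrob} applied to $(d';a')$ yields $\delta(d';a')\ge F(a')=F(T)=F^h(a)/h$, hence $h\,\delta(d';a')\ge F^h(a)$. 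Since $\delta(d;a)-h\,\delta(d';a')=h\sum_k a'_k-\sum_i a_i$, it then suffices to verify the weight comparison $\sum_i a_i\le h\sum_k a'_k$, which gives $\delta(d;a)\ge h\,\delta(d';a')\ge F^h(a)$, as wanted.

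The step I expect to be the main obstacle — and the reason the equivalence is claimed only for $c\le3$ — is guaranteeing that $(d';a')$ is again a \emph{valid regular pair with at most three degrees} and that the weight comparison above actually holds. Passing from $S$ to $T$ and replacing the weights by generators of $T$ need not respect the combinatorial regularity conditions, need not preserve $c\le n$ or $a'_k\neq1$, and can in principle enlarge the number of degrees or make $h\sum_k a'_k$ too small; none of this is automatic. I would therefore carry out an explicit case analysis in the number of degrees, showing that for $c\le 3$ the construction lands back among regular pairs with $c'\le 3$ and that the required inequality on the weights is forced by the reduced structure. It is exactly this $c$-dependent bookkeeping that fails in general, whereas the semigroup identity $F^h(a)=h\,F(T)$ and the reduction lemmas themselves are valid for all $c$.
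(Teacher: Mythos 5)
Your treatment of the easy direction and the identity $F^h(a_0,\ldots,a_n)=h\,F(T)$ for the quotient semigroup $T=\{m\ge 0: mh\in\langle a_0,\ldots,a_n\rangle\}$ are fine, but the construction at the heart of the converse does not get off the ground. First, $h$-regularity (Definition \ref{def:pair}) imposes no divisibility of the degrees by $h$: condition (ii) of that definition concerns the greatest common divisors $a_I$ of subsets of \emph{weights}, never the degrees, and for instance the pair $(6;2,3)$ is $5$-regular with $2,3\nmid 5$ while $5\nmid 6$. So the rescaled degrees $d_j/h$ need not be integers and your pair $(d';a')$ is simply not defined. Second, even where it is defined, the minimal generators of $T=S/h$ bear no controlled relation to the $a_i$: quotient semigroups can require many more generators than $S$, so regularity of $(d';a')$, the condition $c'\le n'$, the bound of $3$ on the number of degrees, and the weight comparison $\sum_i a_i\le h\sum_k a'_k$ can all fail or at least are nowhere established. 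You defer exactly these points to an unspecified case analysis, but that is where all the content of the statement lies, and the divisibility failure means the case analysis as described cannot be carried out.

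The paper's route is genuinely different: it never passes to the quotient semigroup in one step, but descends from $h$ to a proper divisor one prime at a time. Lemma \ref{lemma:reduction} shows that if one of three numerical conditions holds (comparing $\delta(d;a)$ with $\delta(p)$ or $\delta(g)$, and counting $|d(p)|$ against $|a(p)|$), then the $h$-regular statement follows from the $h'$-regular one for some $h'<h$, via Lemma \ref{lemma:h-prop} and inequalities of the form $pF^{h/p}(pa'_0,\ldots,pa'_n)\ge F^h(a_0,\ldots,a_n)$. Proposition \ref{prop:noasterisk} then checks, by a short combinatorial analysis of the possible values of $|d(g)|$ and $|a(g)|$, that every $h$-regular pair with at most three degrees satisfies at least one of those conditions, so descending induction on $h$ terminates at $h=1$, i.e.\ at Conjecture \ref{conj:introfrob}. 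To repair your argument you would either need an extra hypothesis forcing $h\mid d_j$ (not available) or to adopt a prime-by-prime descent of this kind.
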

While we are not able to prove that this happens in general, we show that it is the case for a wide range of pairs (Lemma \ref{lemma:reduction}), which motivates our expectation that this result should hold for pairs with any number of degrees. Another type of reduction that is shown to hold in many cases (though not used throughout the paper) can be found in the last section, where we investigate whether it is enough to prove Conjecture \ref{conj:introfrob} for pairs with maximal number of degrees.

Finally, we point out that the approach of this paper to proving Theorem \ref{thm:intromain} differs from both \cite{tasin} and \cite{jy24} in codimension 1 and 2, as neither of them uses $h$-regular pairs to prove their results in the general type case. So, Theorem \ref{thm:mainpairs} is the first result to provide evidence in support of the validity of Conjecture \ref{conj:introfrob} for pairs of any codimension. 
\\

\noindent \textbf{Acknowledgements.} The author would like to thank Luca Tasin for many important discussions on the content and exposition of the results, and Taro Sano for pointing out a number of improvements to the clarity of the paper.

\section{Preliminaries}
\label{section:preliminaries}
In the following, we will often mix sheaf and divisorial notation. First of all, we review some definitions in order to introduce quasi-smooth weighted complete intersections. An in-depth examination of these objects and their properties can be found in \cite{ianofl}, \cite{dolgachev}.

\subsection{Weighted projective spaces}

\begin{definition}
    Let $x_0,\ldots,x_n$ be affine coordinates on $\mathbb{A}^{n+1}$, $a_0,\ldots,a_n$ positive integers. Consider the action of $\mathbb{C}^*$ given by

    $$\lambda(x_0,\ldots,x_n)=(\lambda^{a_0}x_0,\ldots,\lambda^{a_n}x_n).$$
The quotient $\mathbb{P}(a_0,\ldots,a_n) \coloneqq (\mathbb{A}^{n+1}\setminus \{0\}) / \mathbb{C}^*$ is a projective variety called the \emph{weighted projective space} of weights $a_0,\ldots,a_n$.
\end{definition}

\begin{definition}
    A weighted projective space $\mathbb{P}(a_0,\ldots,a_n)$ is \emph{well-formed} if $$\gcd(a_0,\ldots,\hat{a_i},\ldots,a_n)=1$$ for any $0 \leq i \leq n$, that is, any set of $n$ weights does not share a common non-trivial factor.
\end{definition}

\begin{remark}
\label{rmk:wps}
\hfill
        \begin{enumerate}[label=(\roman*)]
        \item{Due to the $\mathbb{C}^*$-action, any well-formed weighted projective space which is not the standard projective space is singular. By construction, the singular locus of the weighted projective space $\mathbb{P}(a_0,\ldots,a_n)$ is the union of the strata 
        $$\Pi_I \coloneqq  \{x_i = 0 \mid i \notin I\},$$
        where $I = \{i_1,\ldots,i_k\} \subset \{a_0,\ldots,a_n\}$ is any subset such that $a_I \coloneqq \gcd_{i\in I} (a_i) >1$.}
        
    \item{For any weighted projective space $\mathbb{P}$, its class group $\text{Cl}(\mathbb{P})$ is cyclic; on the other hand, since $\mathbb{P}$ is singular, in general it does not coincide with $\text{Pic}(\mathbb{P})$.}
    \item{
    Generalising the standard projective space, the canonical sheaf of $\mathbb{P}$ is given by \cite[Corollary 6B.8]{br86}
    $$K_\mathbb{P} = \bigO_{\mathbb{P}}(-\sum_{i=0}^{n}a_i).$$
    }

\end{enumerate}

\end{remark}

\subsection{Weighted complete intersections}

\begin{definition}
\hfill
\begin{itemize}
    \item{
    Let $X$ be a variety in $\mathbb{P}(a_0,\ldots,a_n)$, and $I$ its homogeneous ideal. Suppose that $I$ is generated by a regular sequence $\{f_i\}$ of homogenous polynomials such that $\deg f_i = d_i$. Then, we say that $X$ is a \emph{weighted complete intersection} (WCI for short) of multidegree $(d_1,\ldots,d_c)$. Any WCI of multidegree $d_1,\ldots,d_c$ will be denoted by $X_{d_1,\ldots,d_c}$.
    }
    \item{
    A WCI $X_{d_1,\ldots,d_c}$ is called a \emph{linear cone} if $a_i=d_j$ for some $i$ and $j$.
    }
\end{itemize}
\end{definition}

Since weighted projective spaces are usually singular, it should be expected that subvarieties are rarely smooth. This leads to the following, weaker definition.

\begin{definition}[\cite{dimca}, Definition 1]
\hfill
\begin{itemize}
    \item{A subvariety $X$ of the weighted projective space $\mathbb{P}=\mathbb{P}(a_0,\ldots,a_n)$ is said to be  \emph{well-formed} if $\text{codim}_X(X \cap \text{Sing}(\mathbb{P}))\geq 2$.}
    \item{
    Let $\pi \colon \mathbb{A}^{n+1}\setminus \{0\} \rightarrow \mathbb{P}(a_0,\ldots,a_n)$ be the canonical projection. We say that $X$ is \emph{quasi-smooth} if the punctured affine cone $\pi^{-1}(X)$ is smooth.
    }
\end{itemize}
\end{definition}

Note that if $P$ is a singular point of $\pi^{-1}(X)$, all points in the same fiber of $\pi$ are also singular. As a consequence, any singularity on a quasi-smooth WCI $X$ only appears due to the $\mathbb{C}^*$-action. 

\begin{remark}
    In \cite{ianofl}, a different definition of well-formedness is used: there, a WCI $X \subset \mathbb{P}$ of codimension $c$ is well-formed if it does not contain any codimension $c+1$ singular stratum of $\mathbb{P}$. As pointed out in \cite{prz}, the two definitions are not equivalent (as the one in \cite{ianofl} is weaker), but they do coincide when $\dim{X}\geq 3$ and $X$ is not a linear cone.
\end{remark}

\begin{property}
\label{property:WCI}
    Let $X=X_{d_1,\ldots,d_c} \subset \mathbb{P}=\mathbb{P}(a_0,\ldots,a_n)$ be a well-formed quasi-smooth WCI. 
    \begin{enumerate}[label=(\roman*)]
        \item{$\text{Sing}(X) = X \cap \text{Sing}(\mathbb{P})$.
        }
        \item{If $\dim X >2$, then $\text{Cl}(X) \cong \mathbb{Z}$ and is generated by $\bigO_X(1) \coloneqq \restr{\bigO_\mathbb{P}(1)}{X}$.}
    
        \item{Adjunction holds: in particular, if $\dim X >2$ the canonical sheaf of $X$ is given by

        $$K_X = \bigO_X(\sum_{i=1}^{c} d_i - \sum_{j=0}^n a_i).$$
        The integer number $\delta = \sum_{i=1}^{c} d_i - \sum_{j=0}^n a_i$ is called the \emph{amplitude} of X.
        }
    
        \item{
        The space of global sections of $\bigO_X(k)$ can be computed from the homogenous coordinate ring of $X$. More precisely, let $A = \mathbb{C}[x_0,\ldots,x_n]/(f_1,\ldots,f_c)$ be the homogenenous coordinate ring of $X$, $A_k$ its $k$-graded part. Then,

        $$H^0(X,\bigO_X(k)) \simeq A_k.$$
        }

\end{enumerate}
\end{property}

\begin{remark}
    For the most part, we will suppose that the WCI we consider are \emph{not} linear cones: in fact, a general WCI $$X=X_{d_1,\ldots,d_c} \subset \mathbb{P}(a_0,\ldots,a_n)$$ with $d_1 = a_0$ is isomorphic to 
    $$X'=X'_{d_2,\ldots,d_c} \subset \mathbb{P}(a_1,\ldots,a_n).$$ 
\end{remark}

In \cite{tasin}, it was proved that there are necessary and sufficient numerical conditions for a general WCI $X$ which is not a linear cone to be quasi-smooth. While we will not use the result in its general form, it gives a complete generalisation of previous partial results such as \cite[Chapter 8]{ianofl} and \cite[Proposition 2.3]{chen15}.

\begin{proposition}[{\cite{tasin}, Proposition 3.1}]
\label{prop:WCI}
Let $X = X_{d_1,\ldots,d_c} \subset \mathbb{P}(a_0,\ldots,a_n)$ be a quasi-smooth WCI which is not a linear cone. For a subset $I=\{i_1,\ldots,i_k\} \subset \{0,\ldots,n\}$ let $\rho_I = \min \{c,k\}$, and for a $k$-tuple of natural numbers $m=(m_1,\ldots,m_k)$ write $m \cdot a_I \coloneqq \sum_{j=0}^k m_j a_{i_j}.$
Then, one of the following conditions holds.

\begin{itemize}
    \item[(Q1)]{
    There exist distinct integers $p_1,\ldots,p_{\rho_I} \in \{1,\ldots,c\}$ and $k$-tuples $M_1,\ldots,M_{\rho_I} \in \mathbb{N}^k$ such that $M_j \cdot a_I = d_{p_j}$ for $j=1,\ldots,\rho_I$.
    }
    \item[(Q2)]{
    Up to a permutation of the degrees, there exist:
    \begin{itemize}
        \item {
        an integer $l < \rho_I$,
        }
        \item{integers $e_{\mu,r}\in \{0,\ldots,n\}\setminus I$ for $\mu = 1,\ldots,k-l$ and $r=l+1,\ldots,c$,
        }
        \item{$k$-tuples $M_1,\ldots,M_l$ such that $M_j \cdot a_I = d_j$ for $j=1,\ldots,l$,
        }
        \item{for each $r$, $k$-tuples $M_{\mu,r}$, $\mu=1,\ldots,k-l$ such that $a_{e_{\mu,r}}+M_{\mu,r}\cdot a_I = d_r$,
        }
        \end{itemize}
        satisfying the following property: for any subset $J \subset \{l+1,\ldots,c\}$, 
        $$|\{e_{\mu,r} \colon r\in J, \mu=1,\ldots,k-l\}| \geq k-l+|J|-1.$$
    }
\end{itemize}

Conversely, if for all subsets $I \subset \{0,\ldots,n\}$ either $(Q1)$ or $(Q2)$ holds, then a general WCI $X_{d_1,\ldots,d_c} \subset \mathbb{P}(a_0\,\ldots,a_n)$ is quasi-smooth.

\end{proposition}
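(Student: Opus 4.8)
The plan is to use the Jacobian criterion on the affine cone together with a stratification of $\mathbb{A}^{n+1}\setminus\{0\}$ by coordinate support, and then to read off $(Q1)$ and $(Q2)$ as the exact combinatorial conditions under which a general choice of defining polynomials has no rank-deficient point on each stratum.

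First I would recall that $X$ is quasi-smooth precisely when the punctured affine cone $\pi^{-1}(X)$ is smooth, i.e. when at every point $p\in\pi^{-1}(X)$ the gradients $\nabla f_1(p),\dots,\nabla f_c(p)$ are linearly independent. Since this depends only on which coordinates of $p$ vanish, I would stratify by the support $I=\{i:x_i(p)\neq 0\}$ and treat each stratum $T_I=\{x_i\neq 0 \iff i\in I\}\cong(\mathbb{C}^*)^{|I|}$ separately, writing $k=|I|$. On $T_I$ the gradient $\nabla f_j(p)$ splits into a \emph{tangential} part, equal to the gradient of the restriction $\bar f_j:=f_j|_{x_i=0,\,i\notin I}$, and a \emph{normal} part whose $e$-th entry ($e\notin I$) is $\partial f_j/\partial x_e(p)$. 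The crucial observation is that $\bar f_j$ is governed exactly by the monomials $x^M$ with $M\cdot a_I=d_j$ (producing the $M_j$'s), while the normal entry at $e$ is governed by the monomials $x_e x^M$ with $a_e+M\cdot a_I=d_j$ (producing the conditions on the $e_{\mu,r}$'s); every other monomial of $f_j$ lies in $(x_i:i\notin I)^2$ and contributes nothing to $\nabla f_j|_{T_I}$.

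With this dictionary in place, the problem on each $T_I$ becomes one of generic rank. Using Bertini/generic-smoothness for the base-point-free parts and genericity of the coefficients, the independence of the $c$ gradients over $T_I$ reduces to the tangential block attaining rank $\rho_I=\min\{c,k\}$ after it is supplemented by independent normal directions. Condition $(Q1)$ is the case where $\rho_I$ of the degrees are representable as $M_j\cdot a_I$: then the corresponding $\bar f_{p_j}$ are general sections realizing the full tangential rank $\rho_I$, so the intersection with $T_I$ is cut out with the expected transversality (and, when $k<c$, the remaining equations generically force it to be empty), giving quasi-smoothness on $T_I$. Condition $(Q2)$ is the deficient case, where only $l<\rho_I$ pure monomials are available and the missing $\rho_I-l$ independent directions must be recovered from the normal block built from the monomials $x_{e_{\mu,r}}x^{M_{\mu,r}}$; here the inequality $|\{e_{\mu,r}:r\in J,\ \mu=1,\dots,k-l\}|\geq k-l+|J|-1$ is precisely the Hall-type transversal condition guaranteeing that, simultaneously for every sub-collection $J$ of the remaining equations, the normal vectors span enough independent directions to bring the total rank up to $c$.

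For the converse I would run the same analysis in the other direction: assuming $(Q1)$ or $(Q2)$ for every $I$, each stratum contributes a closed condition of positive codimension in the finite-dimensional space of admissible coefficient tuples on which a rank-deficient point could appear; since there are finitely many strata, a general $f$ avoids all of them and $X$ is quasi-smooth. The main obstacle is the combinatorial heart of $(Q2)$: showing that the counting inequality, quantified over all $J$, is equivalent to the normal Jacobian block generically achieving the required rank is a genuine matroid/Hall-theorem computation, and the ``$-1$'' together with the interaction between the tangential and normal blocks makes both the rank lower bound and the explicit construction of a singular point (when the conditions fail) the delicate steps.
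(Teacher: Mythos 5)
First, a point of reference: the paper does not prove this proposition at all --- it is imported verbatim from \cite{tasin} (Proposition 3.1 there), so there is no in-paper argument to compare against. Your strategy is the standard one and is essentially the strategy of the cited source: test smoothness of the punctured affine cone stratum by stratum via the Jacobian criterion, decompose each $f_j$ on the stratum with support $I$ into the ``pure'' part $\bar f_j$ (monomials $x^M$ with $M\cdot a_I=d_j$) and the ``normal'' part linear in the $x_e$, $e\notin I$ (monomials $x_ex^M$ with $a_e+M\cdot a_I=d_j$), and observe that everything else lies in the square of the ideal of the stratum and so does not affect the Jacobian there. That dictionary is correct, and it correctly locates where the tuples $M_j$ and the indices $e_{\mu,r}$ in (Q1)/(Q2) come from.

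The gap is that the proposal stops exactly where the proposition begins. The entire content of the statement is the claim that the generic-rank condition on the mixed tangential/normal Jacobian block is \emph{equivalent} to the specific combinatorial inequality $|\{e_{\mu,r}\colon r\in J,\ \mu=1,\dots,k-l\}|\geq k-l+|J|-1$ for all $J\subset\{l+1,\dots,c\}$, and you explicitly defer this (``a genuine matroid/Hall-theorem computation''). Two things in particular are not addressed. For sufficiency, one must show that this inequality forces the $(c-l)\times(n+1-k)$ normal block, whose entries are general sections of the relevant graded pieces, to supply rank $c-l$ \emph{simultaneously at every point} of the $k$-dimensional torus stratum --- this is a dimension count (rank-deficiency loci of the generic matrix versus $\dim T_I=k$), which is where the ``$-1$'' in the inequality comes from, and it is not a bare Hall/K\"onig argument. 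For necessity, one must produce a singular point on the cone for an \emph{arbitrary} quasi-smooth candidate when both (Q1) and (Q2) fail for some $I$; invoking genericity does not help in this direction, and the construction of such a point is the other delicate half you name but do not carry out. As written, the proposal is a correct roadmap to the proof in \cite{tasin}, not a proof.
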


\subsection{$h$-regular pairs}
\label{section:pairs}
The main takeaway from Proposition \ref{prop:WCI} is that quasi-smoothness of a general WCI $X$ can be checked solely on degrees, weights and their numerical relations. At the same time, quasi-smoothness implies strong constraints on the degrees and weights; the following corollary is a display of this fact, and leads to the definition of $h$-regular pairs.

\begin{corollary}[{\cite[Proposition 3.6]{tasin}}]
\label{corollary:h-regular}
Let $X = X_{d_1,\ldots,d_c} \subset \mathbb{P}(a_0,\ldots,a_n)$ be a quasi-smooth well-formed WCI which is not a linear cone. Suppose $\textup{Pic}(X)$ is generated by $\bigO(h)$, $h>0$. For any subset $I = \{i_1,\ldots,i_k\} \subset \{0,\ldots, n\}$ such that $a_I= \gcd(a_{i_1},\ldots,a_{i_k})>1$, one of the following holds.

\begin{enumerate}[label=(\roman*)]
    \item{There exist distinct integers $p_1,\ldots,p_k$ such that $a_I \mid d_{p_1},\ldots,d_{p_k}$.}
    \item{$a_I \mid h.$}
\end{enumerate}
    
\end{corollary}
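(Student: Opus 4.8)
The plan is to split on how many of the degrees are divisible by $a_I$, and to read off the two alternatives geometrically from the way $X$ meets the singular stratum $\Pi_I$. Write $s=|\{\,j : a_I \mid d_j\,\}|$. If $s\geq k$ we are immediately in case (i): choosing $k$ of the degrees divisible by $a_I$ supplies the required $p_1,\dots,p_k$. (This is exactly the output of alternative (Q1) of Proposition~\ref{prop:WCI} when $k\leq c$, since there $a_I$ divides each $d_{p_j}=M_j\cdot a_I$ because $a_I\mid a_i$ for every $i\in I$.) So I assume from now on that $s\leq k-1$, and I will prove that then $a_I\mid h$, i.e. case (ii).

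First I would show that $X$ meets $\Pi_I$. Pass to the affine cone and set $L_I=\{x_i=0:i\notin I\}\cong\mathbb{A}^k$, so that $\pi^{-1}(X)\cap L_I$ is the cone over $X\cap\Pi_I$. A defining equation $f_j$ restricts to a nonzero polynomial on $L_I$ only if $d_j$ is a nonnegative combination of $\{a_i:i\in I\}$, hence only if $a_I\mid d_j$; thus at most $s\leq k-1$ of the $f_j$ are not identically zero on $L_I$. Cutting $\mathbb{A}^k$ by at most $k-1$ equations leaves a locus of dimension $\geq 1$, and since these equations are homogeneous the locus is a cone, so $\pi^{-1}(X)\cap L_I\supsetneq\{0\}$ and therefore $X\cap\Pi_I\neq\emptyset$. (These are precisely the configurations covered by (Q2), and by (Q1) with $k>c$, in Proposition~\ref{prop:WCI}.)

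It remains to convert ``$X$ meets $\Pi_I$'' into ``$a_I\mid h$'', which is the geometric core. Pick $P\in X\cap\Pi_I$ and let $I'\subseteq I$ be the set of indices with $x_i(P)\neq 0$, so $P$ lies on the open part of $\Pi_{I'}$ and its stabiliser under the $\mathbb{C}^*$-action is $\mu_{a_{I'}}$ with $a_{I'}=\gcd_{i\in I'}a_i$. The key computation is that well-formedness of $\mathbb{P}(a_0,\dots,a_n)$ forces the local picture to be as nondegenerate as possible: the induced action of $\mu_{a_{I'}}$ on the normal directions $x_j$ ($j\notin I'$), with weights $a_j\bmod a_{I'}$, is faithful and has no quasi-reflections. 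Indeed, the subgroup acting trivially is $\mu_{\gcd(a_{I'},\,\gcd_{j\notin I'}a_j)}=\mu_{\gcd(a_0,\dots,a_n)}=\{1\}$, and a quasi-reflection would require the $\gcd$ of all the weights but one to exceed $1$; both are excluded because every $n$ of the weights are coprime. Hence the local class group of $\mathbb{P}$ at $P$ is $\mathbb{Z}/a_{I'}$, and since $X$ is quasi-smooth and well-formed --- so that, by Property~\ref{property:WCI}(ii), $\mathrm{Cl}(X)$ is generated by $\bigO_X(1)$ --- the local class group of $X$ at $P$ is again $\mathbb{Z}/a_{I'}$, generated by $\bigO_X(1)$. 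Therefore $\bigO_X(m)$ is Cartier at $P$ if and only if $a_{I'}\mid m$. As $\bigO_X(h)$ generates $\mathrm{Pic}(X)$ it is in particular Cartier, so $a_{I'}\mid h$; and since $I'\subseteq I$ gives $a_I\mid a_{I'}$, we conclude $a_I\mid h$, which is (ii).

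The step I expect to be the main obstacle is the identification of the local class group of the singular WCI $X$ at $P$ as \emph{exactly} $\mathbb{Z}/a_{I'}$ rather than a proper quotient: one must check that the $\mu_{a_{I'}}$-action on the Zariski tangent space $T_PX$ inherits the faithfulness and quasi-reflection-freeness established for the ambient normal action. This is precisely where well-formedness of $X$ (the condition $\mathrm{codim}_X(X\cap\mathrm{Sing}\,\mathbb{P})\geq 2$) is indispensable, as it prevents $X$ from running along the locus where the stabiliser action on $T_PX$ degenerates. A secondary, routine point is that $X\cap\Pi_I$ may meet only deeper strata $\Pi_{I'}$ with $I'\subsetneq I$; this causes no trouble, since $a_I\mid a_{I'}$ makes divisibility of $h$ by the larger number $a_{I'}$ stronger than what is required.
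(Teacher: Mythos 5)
The paper does not actually prove this statement---it is quoted from \cite[Proposition 3.6]{tasin}---but your argument is a correct reconstruction of the standard proof of that result: a dimension count showing that if fewer than $k$ of the degrees are divisible by $a_I$ then at most $k-1$ of the $f_j$ survive on the $k$-dimensional cone over $\Pi_I$, so $X$ meets $\Pi_I$, followed by the local computation that at a point of support $I'$ the sheaf $\mathcal{O}_X(m)$ is Cartier only when $a_{I'}\mid m$. The one soft spot is that in the main text you justify the identification of the local class group of $X$ at $P$ with $\mathbb{Z}/a_{I'}$ by appealing to Property \ref{property:WCI}(ii), which is a global statement (and requires $\dim X>2$); the correct justification is the one you sketch in your closing paragraph, namely that well-formedness of $X$ (codimension $\geq 2$ of $X\cap\mathrm{Sing}\,\mathbb{P}$) forces the $\mu_{a_{I'}}$-action on the smooth local cover of $X$ supplied by quasi-smoothness to be faithful and free in codimension one, so that $\mathrm{Cl}(\mathcal{O}_{X,P})\cong\mathbb{Z}/a_{I'}$ with $\mathcal{O}_X(1)$ mapping to a generator.
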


\begin{definition}
\label{def:pair}
Let $c, n \in \mathbb{N}$, $d_1, \ldots, d_c$, $a_0, \ldots, a_n \geq 1$ be natural numbers, and write $(d;a) = (d_1,\ldots,d_c ; a_0,\ldots,a_n)$. Let $\bar c = \{1,\ldots,c \}$, $\bar n = \{0, \ldots, n \}$. We say that the pair $(d;a)$ is \emph{h-regular} for a positive integer $h$ if, for any non-empty subset $I = \{i_1,\ldots, i_k\} \subset \bar n$ such that $a_I \coloneqq \gcd(a_{i_1},\ldots, a_{i_k})>1$, at least one of the following holds.

\begin{enumerate}[label=(\roman*)]
    \item{There exist $k$ distinct integers $p_1,\ldots,p_k \in \bar c$ such that $a_I \mid d_{p_1},\ldots,d_{p_k}$.}
    \item{$a_I \mid h$.}
\end{enumerate}

If $h=1$, we say the pair is \emph{regular}. In analogy to the geometrical setting, we call the integers $d_i$ the \emph{degrees} of the pair and $c$ the \emph{codimension}, the integers $a_j$ the \emph{weights} and $n$ the \emph{dimension}; also, we say that $h$ is the \emph{regularity index} of the pair.
\end{definition}

Note that the regularity index is not unique, as if a pair $(d;a)$ is $h$-regular, it is also $h'$-regular for any $h'>h$ such that $h \mid h'$. While a minimal index exists, in some cases it is useful to allow $h$ to be not minimal (for example, see Lemma \ref{lemma:h-prop}).

\begin{remark}
    By Property \ref{property:WCI} and Corollary \ref{corollary:h-regular}, if $(d;a)$ are degrees and weights of a well-formed quasi-smooth WCI $X$ which is not a linear cone, and $\dim X > 2$, then $(d;a)$ is a $h$-regular pair, where $\bigO(h)$ is a positive generator of $\text{Pic}X$. On the other hand, it is very easy to find $h$-regular pairs which do not come from a well-formed quasi-smooth WCI.
\end{remark}

We now fix some of the notation we use to work with $h$-regular pairs.

Let $(d;a)$ be a $h$-regular pair.
\begin{itemize}
\item{We write $|d|=c$ (resp. $|a|=n$) if $d\in \mathbb{N}^c$ (resp. $a \in \mathbb{N}^n$). For integers $d_i$ and $a_j$, we write $d_i \in d$ (resp. $a_j \in a)$ if $d_i$ appears in $d$ (resp. $a_j$ appears in $a$).}
\item{For a pair $(d;a)$ with $|d|=c$, $|a|=n$, we define $$\delta(d;a) \coloneqq \sum_{i=0}^c d_i - \sum_{j=0}^n a_j,$$ and call it the amplitude of the pair. If the pair comes from a well-formed quasi-smooth WCI $X$ of dimension $>2$, then $K_X \simeq \bigO_X(\delta(d;a))$ by Property \ref{property:WCI}(iv). When the pair is clear from the context, we will simply write $\delta$ for $\delta(d;a)$.}
\item{Let $g$ be a positive integer. Write $I_g = \{i \in \bar n \colon g \mid a_i \}$, $J_g = \{j \in \bar c \colon g \mid d_j \}$. We can construct two new types of pairs from $(d;a)$:
\begin{itemize}
\item{$(d^g; a^g)$, where $d^g = ((d_j / g)_{j \in J_g}, (d_j)_{j \in \bar c \setminus J_g})$, $a^g = ((a_i / g)_{i \in I_g}$, $(a_i)_{i \in \bar n \setminus I_g})$, is obtained by dividing all divisible degrees and weights by $g$;}
\item{$(d(g), a(g))$, where $d(g) = (d_j)_{j \in J_g}$, $a(g) = (a_i)_{i \in I_g}$, is obtained by only considering degrees and weights divisible by $g$.}
\end{itemize}
When the pair is clear from the context, we will write $\delta(g) = \delta(d(g);a(g))$ and $\delta^g = \delta(d^g;a^g)$.
}

\end{itemize}

\begin{lemma}[{\cite[Lemmas 4.5 and 4.6]{tasin}}]
\label{lemma:h-prop}
\hfill
\begin{itemize}
\item{Let $(d;a)$ be a $h$-regular pair and $p$ a prime not dividing $h$. Then the pairs $(d(p);a(p))$, $(d(p)/p; a(p)/p)$ and $(d^p;a^p)$ are $h$-regular.}
\item{Let $(d;a)$ be a $h$-regular pair and $p$ a prime dividing $h$. Then $(d(p);a(p))$ is $h$-regular, while $(d(p)/p; a(p)/p)$ and $(d^p;a^p)$ are $h/p$-regular.}
\end{itemize}
\end{lemma}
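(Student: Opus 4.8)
The plan is to verify each of the six assertions directly from Definition \ref{def:pair}: I fix a non-empty subset $I$ of the weight-indices of the pair being constructed, assume its associated gcd is $>1$, and show that condition (i) or (ii) holds by transferring the corresponding information from the $h$-regularity of $(d;a)$. Throughout I write $a_I=\gcd_{i\in I}a_i$ for the original pair, and for the new pair I write $a'_i$ for its weights and $a'_I=\gcd_{i\in I}a'_i$. Two elementary facts will do most of the work: first, $\gcd_i(p\,x_i)=p\gcd_i x_i$, so dividing $p$-divisible weights by $p$ divides their gcd by $p$; second, if $v$ is coprime to $p$ with $v\mid h$ and $p\mid h$, then $v\mid h/p$. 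The three constructions come in two flavours — the restricted pairs $(d(p);a(p))$ and $(d(p)/p;a(p)/p)$, where every retained weight lies over $p$, and the pair $(d^p;a^p)$, where divided and undivided weights are mixed — and it is only the latter that requires care.

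For the restricted pairs every admissible $I$ satisfies $I\subseteq I_p$, so $p\mid a_I$. In $(d(p);a(p))$ the entries are unchanged, so if (ii) holds for $(d;a)$ it holds verbatim, while if (i) holds the $|I|$ witnessing degrees are divisible by $a_I$, hence by $p$, hence survive into $d(p)$; this gives $h$-regularity in both parts. In $(d(p)/p;a(p)/p)$ one has $a'_I=a_I/p$, i.e.\ $a_I=p\,a'_I$, so applying the $h$-regularity of $(d;a)$ to $I$ either yields $|I|$ distinct degrees divisible by $p\,a'_I$ (which, divided by $p$, witness (i) for $a'_I$) or yields $p\,a'_I\mid h$. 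In the second case, when $p\nmid h$ this is impossible, forcing (i) and $h$-regularity; when $p\mid h$ it gives $a'_I\mid h/p$, i.e.\ $(h/p)$-regularity.

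The hard case is $(d^p;a^p)$, which I would split according to whether $I\subseteq I_p$. If $I\subseteq I_p$ then $a'_I=a_I/p$ and the argument is identical to the one for $(d(p)/p;a(p)/p)$. If instead $I\not\subseteq I_p$, choose $i_0\in I$ with $p\nmid a_{i_0}$; then $a'_{i_0}=a_{i_0}$ is coprime to $p$, so $p\nmid a'_I$. Since $a'_i\mid a_i$ for every $i\in I$, we get $a'_I\mid a_I$, so in particular $a_I>1$ and the $h$-regularity of $(d;a)$ applies to $I$. If (i) holds, the witnessing degrees $d_{p_j}$ satisfy $a'_I\mid a_I\mid d_{p_j}$, and because $\gcd(a'_I,p)=1$ this divisibility passes to the possibly-divided degree $d'_{p_j}\in d^p$, giving (i) for $a'_I$; note the indices $p_j$ stay distinct, so the new degrees are genuinely distinct. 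If (ii) holds, then $a'_I\mid a_I\mid h$, which is (ii) unchanged when $p\nmid h$, and which upgrades to $a'_I\mid h/p$ when $p\mid h$ by coprimality of $a'_I$ with $p$.

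The main obstacle is precisely this mixed case $I\not\subseteq I_p$ of $(d^p;a^p)$: there $a'_I$ is no longer a clean rescaling of $a_I$, and the whole argument hinges on establishing the two structural facts $a'_I\mid a_I$ and $p\nmid a'_I$, which are what let me transfer condition (i) through the (partial) division of degrees and, when $p\mid h$, sharpen $a'_I\mid h$ to $a'_I\mid h/p$. Everything else reduces to the identity $\gcd_i(p\,x_i)=p\gcd_i x_i$ and to the bookkeeping that the degrees witnessing (i) remain distinct and survive, suitably divided, into the new degree tuple.
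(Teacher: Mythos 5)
Your proof is correct. Note that the paper gives no proof of this lemma at all---it is imported verbatim from \cite{tasin} (Lemmas 4.5 and 4.6)---so there is no in-paper argument to compare against; your direct verification from Definition \ref{def:pair} is the natural one. The two structural facts you isolate in the mixed case of $(d^p;a^p)$, namely $a'_I\mid a_I$ (which follows since $a'_I\mid a'_i\mid a_i$ for every $i\in I$) and $p\nmid a'_I$, are precisely what is needed to push condition (i) through the partially divided degrees and to sharpen $a'_I\mid h$ to $a'_I\mid h/p$ when $p\mid h$, and your handling of the restricted pairs via $a_I=p\,a'_I$ is likewise sound.
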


\begin{corollary}
\label{cor:h-prop}
    Let $(d;a)$ be a $h$-regular pair, and $g=\gcd(a_{i_1},\ldots,a_{i_m})>1$ for some weights $a_{i_1},\ldots,a_{i_m}$. Then, $(d(g);a(g))$ is $h$-regular and $(d(g)/g;a(g)/g)$ is $h/g$-regular.
\end{corollary}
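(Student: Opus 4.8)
The plan is to reduce the statement about a general gcd $g>1$ to the prime-power case already handled by Lemma~\ref{lemma:h-prop}, by factoring $g$ into primes and iterating. First I would observe that the two assertions to prove are: (a) $(d(g);a(g))$ is $h$-regular, and (b) $(d(g)/g; a(g)/g)$ is $(h/g)$-regular, where $g=\gcd(a_{i_1},\ldots,a_{i_m})$. The subtlety is that Lemma~\ref{lemma:h-prop} is stated only for a \emph{prime} $p$, whereas here $g$ may be an arbitrary composite divisor of the weights, so the main work is bridging that gap.

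For assertion (a), I would argue directly from the definition of $h$-regularity rather than via the lemma. Let $I=\{i_1,\ldots,i_m\}$, so $a_I=g$, and note that $(d(g);a(g))$ consists precisely of the degrees and weights divisible by $g$. The key point is that any subset of $a(g)$ with gcd $>1$ is also a subset of $a$, and its gcd computed inside $a(g)$ equals its gcd inside $a$; moreover every weight in $a(g)$ is a multiple of $g$, so condition (i) or (ii) inherited from $(d;a)$ must be checked to still involve only degrees lying in $d(g)$. Concretely, for a subset $I'\subset a(g)$ with $a_{I'}>1$, $h$-regularity of $(d;a)$ gives either $a_{I'}\mid h$ (which transfers verbatim) or $k'$ distinct degrees divisible by $a_{I'}$; since $a_{I'}$ is a multiple of a divisor shared by these weights, I would need to confirm those degrees can be chosen among $d(g)$, i.e.\ are divisible by $g$. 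Here one should be careful: $a_{I'}$ need not be divisible by $g$, so this inheritance is not automatic and is the genuine content of the claim. I expect the cleanest route is to invoke the first bullet of Lemma~\ref{lemma:h-prop} repeatedly: writing $g=p_1\cdots p_r$ as a product of (not necessarily distinct) primes, the primes dividing $h$ and those not dividing $h$ behave differently, so I would split the factorisation accordingly and peel off one prime at a time.

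For assertion (b), I would factor $g$ as $g = g_0 \cdot g_1$, where $g_0$ collects the prime factors of $g$ (with multiplicity) that divide $h$ and $g_1$ collects those that do not, so that $g_0 = \gcd(g,h^\infty)$ in the appropriate sense and $h/g = (h/g_0)$ after dividing by $g_1$ leaves the regularity index unchanged. Applying the second bullet of Lemma~\ref{lemma:h-prop} once for each prime factor $p \mid g$ with $p\mid h$ lowers the regularity index by a factor of $p$ each time, producing an $(h/g_0)$-regular pair; applying the first bullet for each prime factor $p\mid g$ with $p\nmid h$ preserves the index. The only delicate bookkeeping is that after dividing by a prime $p$, the remaining primes dividing $g/p$ must still be seen as gcds of the \emph{current} weight tuple, which holds because dividing all entries of $a(g)$ by $p$ sends the common factor $g$ to $g/p$ and preserves divisibility by the other primes.

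The main obstacle will be making the induction on the prime factorisation of $g$ rigorous while tracking how the regularity index changes and how the two constructions $(d(\cdot);a(\cdot))$ and $(d(\cdot)/(\cdot); a(\cdot)/(\cdot))$ compose under successive prime divisions. In particular one must verify that $(d(g);a(g))$ agrees with iteratively extracting $(d(p);a(p))$ for the primes $p\mid g$, and that $(d(g)/g; a(g)/g)$ agrees with iteratively dividing, so that the prime-by-prime application of Lemma~\ref{lemma:h-prop} indeed assembles into the composite statement. Once this compatibility of the constructions with the prime factorisation is established, assertions (a) and (b) follow immediately by summing the effects on the regularity index. I would therefore state and briefly justify this compatibility as the first step, then conclude by the straightforward product over primes.
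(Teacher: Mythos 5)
Your overall strategy --- reduce to the prime case of Lemma~\ref{lemma:h-prop} by induction on the prime factorisation of $g$, peeling off one prime at a time --- is exactly the one the paper uses. But the compatibility statement you single out as the key step to verify, namely that ``$(d(g);a(g))$ agrees with iteratively extracting $(d(p);a(p))$ for the primes $p\mid g$'', is false whenever $g$ is not squarefree: extracting the multiples of $p$ twice in a row still only yields the multiples of $p$, not of $p^2$, so the iteration never sees the higher powers of $p$ dividing $g$. Since your argument for assertion (a) rests entirely on this iteration (you correctly observe earlier that the direct check from the definition does not go through, because a gcd $a_{I'}$ of weights in $a(g)$ need not be divisible by $g$, so the witnessing degrees need not lie in $d(g)$), assertion (a) is not established for general $g$.

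The repair, which is what the paper actually does, is to interleave the division into the extraction rather than keeping the two constructions separate. Pass to $(d';a')=(d(p_1)/p_1;a(p_1)/p_1)$, which by Lemma~\ref{lemma:h-prop} is $h$- or $h/p_1$-regular according to whether $p_1\mid h$; observe that the weights $a_{i_1}/p_1,\ldots,a_{i_m}/p_1$ lie in $a'$ with greatest common divisor $g'=g/p_1$, so that repeated prime factors are tracked correctly; apply the inductive hypothesis to $(d'(g');a'(g'))$; and finally note that $(d(g);a(g))=p_1\,(d'(g');a'(g'))$, so multiplying back by $p_1$ recovers the $h$-regularity of $(d(g);a(g))$. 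Your plan for assertion (b) is essentially this divide-one-prime-at-a-time recursion and is sound modulo the same bookkeeping; just note that the exponent of a prime $p$ in $h$ may be smaller than its exponent in $g$, so the index actually drops by $\gcd(g,h)$ rather than by your $g_0$ (a caveat already implicit in the phrase ``$h/g$-regular'' in the statement itself).
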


\begin{proof}
    Let $g=\prod p_i^{k_i}$, with $p_i>1$ prime numbers. We prove the statement about $(d(g);a(g))$ by induction on $k=\sum k_i$; the base case $k=1$ is Lemma \ref{lemma:h-prop}. Now suppose the statement is true up to $\sum k_i=k-1$. By the same Lemma, the pair $(d';a')=(d(p_1)/p_1;a(p_1)/p_1)$ is $h$-regular if $p_1 \nmid h$, or $h/p_1$-regular if $p_1 \mid h$. Since $p_1 \mid g$, $a_{i_1}/p_1,\ldots,a_{i_m}/p_1 \in a'$ and $g'=\gcd(a_{i_1}/p_1,\ldots,a_{i_m}/p_1)=g/p_1$. By induction, $(d'(g');a'(g'))$ is either $h$-regular or $h/p_1$ regular as before. Since $(d(g);a(g))=p_1(d'(g');a'(g'))$, we get that $(d(g);a(g))$ is $h$-regular.

    The statement on $(d(g)/g;a(g)/g)$, follows directly from the fact that $(d(g);a(g))$ is $h$-regular, and repeatedly applying Lemma \ref{lemma:h-prop} for every prime $p$ dividing $h$.
\end{proof}

\section{Effective non-vanishing and the Frobenius problem}
We can now introduce the ideas to approach Conjecture \ref{conj:kawamata} for a quasi-smooth WCI from the point of view of $h$-regular pairs. The properties described in Section \ref{section:preliminaries} show that Conjecture \ref{conj:kawamata} for quasi-smooth weighted complete intersections is equivalent to the following.

\begin{conjecture}
\label{conj:WCI}
Let $(d;a)=(d_1,\ldots,d_c; a_0,\ldots,a_n)$ be the pair of degrees and weights of a well-formed quasi-smooth WCI $X$ which is not a linear cone with $\dim X > 2$, and $\bigO(h)$ a positive generator of $\textup{Pic}X$. Let $\delta(d;a)$ be the amplitude of $X$. Then, for any positive integer $k$ such that $h \mid k$ and $k > \delta(d;a)$ there exist natural numbers $x_0,\ldots,x_n$ such that 
$$\sum_{i=0}^n x_i a_i = k.$$
\end{conjecture}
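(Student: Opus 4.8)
The plan is to convert the representability assertion into a single inequality between the amplitude and a Frobenius number, and then to prove that inequality using the $h$-regular structure forced by quasi-smoothness.

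First I would make the translation precise and dispose of the degenerate cases. If some weight $a_i$ divides $h$ (in particular if $a_i=1$), then $h\mid k$ forces $a_i\mid k$, so $k=(k/a_i)a_i$ is already a non-negative combination of the weights and there is nothing to prove; hence I may assume $a_i\nmid h$ for every $i$. By well-formedness $\gcd(a_0,\dots,a_n)=1$, so only finitely many integers fail to be representable, and $F^h(a_0,\dots,a_n)$, the largest multiple of $h$ that is not representable, is well defined and finite (with the claim vacuously true if no such multiple exists). Now the assertion ``every multiple of $h$ exceeding $\delta$ is representable'' is literally equivalent to $\delta \geq F^h(a_0,\dots,a_n)$: if $\delta\geq F^h$ then any multiple of $h$ larger than $\delta$ is larger than the largest non-representable multiple of $h$, hence representable; conversely, if $\delta<F^h$ then $F^h$ is itself a multiple of $h$ exceeding $\delta$ that is not representable. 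Thus Conjecture \ref{conj:WCI} is equivalent to the purely numerical inequality $\delta(d;a)\geq F^h(a_0,\dots,a_n)$.

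Next I would discard the geometry entirely. By Property \ref{property:WCI} together with Corollary \ref{corollary:h-regular}, the degrees and weights of $X$ form an $h$-regular pair in the sense of Definition \ref{def:pair}, so it suffices to prove $\delta(d;a)\geq F^h(a_0,\dots,a_n)$ for an arbitrary $h$-regular pair; this is exactly Conjecture \ref{conjecture:h-frob} (and Conjecture \ref{conjecture:frob} when $h=1$). The gain is that the $h$-regular conditions are stable under the natural combinatorial operations on pairs: for an integer $g$ arising as the gcd of a set of weights, passing to the sub-pair $(d(g);a(g))$ and to the quotient pairs $(d(g)/g;a(g)/g)$ and $(d^g;a^g)$ preserves regularity with a controlled regularity index (Lemma \ref{lemma:h-prop}, Corollary \ref{cor:h-prop}). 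The heart of the argument is then a recursive estimate on $F^h$: for each prime $p$ dividing the gcd of some weights, $h$-regularity dictates that either $p\mid h$ or $p$ divides at least the required number of degrees, and I would use this dichotomy to split the weights according to their common factors and bound $F^h(a_0,\dots,a_n)$ in terms of the Frobenius data of the sub- and quotient pairs, combined with classical recursive upper bounds of Brauer type (Proposition \ref{prop:br}). Since $\delta$ decomposes additively under exactly the same operations, these estimates can be matched term by term, yielding the inequality by induction on the codimension $c$ and on the number of weights; for $c\leq 3$ the induction terminates in a finite list of configurations checked directly, which is Theorem \ref{thm:mainpairs}.

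The main obstacle is precisely this inductive bound on the Frobenius number. Because no closed formula for $F(m_1,\dots,m_k)$ exists once $k\geq 3$ (Theorem \ref{thm:noformula}), one cannot simply compute $F^h$; everything must be extracted from the structural relations between weights and degrees imposed by $h$-regularity, and the delicate point is to guarantee that the divisibility constraints always compensate for the growth of the Frobenius number as the recursion proceeds. It is the difficulty of controlling this interaction uniformly in $c$ that confines a complete proof to small codimension.
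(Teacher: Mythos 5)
Your proposal is correct and follows essentially the same route as the paper: after disposing of weights dividing $h$ (which, via Proposition \ref{prop:delta>0}, also absorbs the Fano and Calabi--Yau cases where $\delta\leq 0$), you translate the statement into the inequality $\delta(d;a)\geq F^h(a_0,\ldots,a_n)$ for the $h$-regular pair furnished by Property \ref{property:WCI} and Corollary \ref{corollary:h-regular} --- exactly Conjecture \ref{conjecture:h-frob} --- and then argue for codimension at most $3$ by the same reduction-plus-recursion scheme (Lemma \ref{lemma:reduction} and Proposition \ref{prop:noasterisk} to pass to the regular case, then the recursive bounds, where the paper's proof in fact relies on its own Lemmas \ref{lemma:recfrob1g}--\ref{lemma:recfrob2g} rather than the Brauer-type estimate of Proposition \ref{prop:br}, which it cites only for comparison). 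Like the paper, this yields the conjecture only for $\mathrm{codim}\,X\leq 3$ (Theorem \ref{thm:main}), and your closing caveat about the lack of uniformity in $c$ matches the paper's own assessment of why the general statement remains open.
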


In \cite{tasin}, Conjecture \ref{conj:WCI} was proved for Fano and Calabi-Yau well-formed quasi-smooth WCIs which are not linear cones. In these cases, since $H-K_X$ is automatically ample, the conjecture is the same as stating that $h$ belongs to the semigroup generated by $a_0,\ldots,a_n$. This is done in the more general setting of $h$-regular pairs, for which the following stronger fact is proved.

\begin{proposition}[{\cite[Proposition 5.12, Corollary 5.13]{tasin}}]
\label{prop:delta>0}
Let $(d;a)$ be a $h$-regular pair. if $a_i \nmid h$ for any $i=0,\ldots,n$, then $\delta(d;a)>0$. Equivalently, if $\delta(d;a)\leq 0$ then there exists a weight $a_i$ such that $a_i \mid h$.
\end{proposition}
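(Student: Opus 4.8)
The plan is to prove the stated form directly: assuming $a_i \nmid h$ for every $i$, I will show $\delta(d;a) > 0$. The engine is the elementary identity, valid for any prime $p$ dividing at least one weight,
$$\delta(d;a) = \delta(d^p;a^p) + (p-1)\,\delta\!\left(d(p)/p;\,a(p)/p\right),$$
which follows by a direct computation from the definitions of $(d^p;a^p)$ and $(d(p);a(p))$ in Section \ref{section:pairs}, together with $\delta(d(p);a(p)) = p\,\delta(d(p)/p;a(p)/p)$. I would run the whole argument by induction on the number $\Omega(h)$ of prime factors of $h$ counted with multiplicity, proving the sharper two-part statement ``$\delta(d;a)\ge 0$ always, and $\delta(d;a)>0$ whenever the codimension does not exceed the dimension''.

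For the inductive step I would pick a prime $p \mid h$ dividing some weight (if no prime dividing $h$ divides a weight, then $(d;a)$ is already $(h/p)$-regular and one simply lowers the index). By Lemma \ref{lemma:h-prop}, both $(d^p;a^p)$ and $(d(p)/p;a(p)/p)$ are $(h/p)$-regular, and — this is the point that makes the induction close — the hypothesis ``no weight divides the index'' is \emph{preserved}: if $p \mid a_i$ then $a_i/p \mid h/p$ would force $a_i \mid h$, while if $p \nmid a_i$ then $a_i \mid h/p$ would likewise force $a_i \mid h$; so every weight of each reduced pair still fails to divide $h/p$. Since $(d^p;a^p)$ has the same codimension and dimension as $(d;a)$, the induction hypothesis gives $\delta(d^p;a^p)>0$, while it gives $\delta(d(p)/p;a(p)/p)\ge 0$ regardless of the shape of that pair; as $p-1>0$, the identity yields $\delta(d;a)>0$. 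This reduces everything to the base case $h=1$.

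For the base case $h=1$ (so every weight is $\ge 2$) I would first extract from $h$-regularity the divisibility-counting inequalities: for every prime power $p^m$, applying Definition \ref{def:pair} to $I=\{i : p^m \mid a_i\}$ — whose gcd is divisible by $p^m>1$ and hence cannot divide $h=1$, so alternative (i) must hold — gives $\#\{j : p^m \mid d_j\} \ge \#\{i : p^m \mid a_i\}$. I would then build an assignment $\phi$ sending each weight $a_i$ to a degree $d_{\phi(i)}$ divisible by it, arranged so that the weights lying over any fixed degree are pairwise coprime. For such a fibre the product of its weights divides $d_{\phi(i)}$, and for integers $\ge 2$ a product of pairwise coprime numbers dominates their sum with a surplus; summing over degrees gives $\sum_j d_j \ge \sum_i a_i$, hence $\delta\ge 0$, and tracking the surplus yields the sharp bound $\delta\ge (n+1)-c$. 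When there are strictly more weights than degrees, i.e. $c\le n$, some fibre must contain at least two coprime weights, producing strict surplus and thus $\delta>0$.

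The main obstacle is exactly the construction of $\phi$ in the base case. The counting inequalities furnish, for each individual prime $p$, a valuation-respecting injection of the $p$-divisible weights into the $p$-divisible degrees, but these must be amalgamated into one global assignment that is simultaneously injective on the $p$-divisible weights for \emph{every} prime $p$ at once — a system-of-distinct-representatives problem across all primes. I expect to settle its solvability by a Hall-type matching argument assembled from the inequalities $\#\{j : p^m \mid d_j\} \ge \#\{i : p^m \mid a_i\}$, the delicate case being when a single degree must absorb several weights (forced precisely when $c\le n$), where one must check that the required coprimality of the absorbed weights is compatible with per-prime injectivity. A secondary subtlety, already visible above, is that reducing at a prime $p$ not dividing $h$ can turn a weight equal to $p$ into $1$, i.e. into a weight dividing the index; organising the induction on $\Omega(h)$ as described is what prevents this degeneration and confines all such difficulties to the single base case $h=1$.
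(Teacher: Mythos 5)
First, note that the paper does not actually prove this proposition: it is imported verbatim from \cite[Proposition 5.12, Corollary 5.13]{tasin}, so there is no internal argument to compare yours against. Your reduction to the case $h=1$ is sound as far as it goes: the identity $\delta(d;a)=\delta(d^p;a^p)+(p-1)\,\delta(d(p)/p;a(p)/p)$ is a correct computation, Lemma \ref{lemma:h-prop} gives the $(h/p)$-regularity of both reduced pairs, and your check that no weight of either reduced pair divides $h/p$ is right. (A minor mismatch: your induction only yields $\delta>0$ under the extra hypothesis $c\le n$, which the proposition does not state; some such restriction is needed anyway, since e.g.\ $(6;6)$ is a $1$-regular pair in the sense of Definition \ref{def:pair} with $a_0\nmid 1$ and $\delta=0$, so this is more a defect of the statement than of your plan.)

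The genuine gap is in the base case: the assignment $\phi$ you want to construct does not exist in general, so no Hall-type argument can produce it. Consider the pair $(d;a)=(30,30;6,10,15)$. It is regular: each of $6,10,15$ divides $30$, and each pairwise gcd ($2$, $3$, $5$) divides both degrees; all weights are $\ge 2$ and $c=2\le n=2$. Any map $\phi$ from the three weights to the two degrees must send two weights to the same degree, but every pair among $6,10,15$ has a nontrivial common factor, so no assignment with pairwise coprime fibres exists. All your counting inequalities $\#\{j: p^m\mid d_j\}\ge \#\{i: p^m\mid a_i\}$ hold for this pair, so they cannot be assembled into the matching you need: the obstruction is not a failure of Hall's condition but the nonexistence of the target object. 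The obvious weakening --- requiring only that the $\lcm$ of a fibre divide its degree --- does not rescue the product-versus-sum step either, since $\lcm(6,10,15)=30<6+10+15$. The pair $(30,30;6,10,15)$ does satisfy the conclusion ($\delta=29>0$), but your accounting cannot see it; the base case needs a different argument, e.g.\ an induction on the number of weights using the subpairs $(d(p);a(p))$ and $(d^p;a^p)$ as in \cite{tasin}, rather than a global assignment of weights to degrees.
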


Similarly, we would like to prove a statement on $h$-regular pairs that implies Conjecture \ref{conj:WCI} for weighted complete intersections of general type. In order to precisely state the problem we will consider, we need two definitions.

\begin{definition}
\label{def:frob}
Let $m_1,\ldots,m_k>1$ be coprime natural numbers and $S=\langle m_1,\ldots,m_k \rangle$ the monoid generated by $m_1,\ldots,m_k$ by addition. Then, the set $G(S) = \mathbb{N} \setminus S$ is finite, so we can define the \emph{Frobenius number}
of $m_1,\ldots,m_k$ (or equivalently, of $S$) as
$$F(S)=F(m_1,\ldots,m_k)=\max (\mathbb{Z}\setminus S).$$
\end{definition}
In other words, the Frobenius number of $m_1,\ldots,m_k$ is the largest integer which cannot be written as a non-negative linear combination of $m_1,\ldots,m_k$. When $m_i \neq 1$ for every $i$, $F(S) \in G(S)$; otherwise, $F(S) = -1$.

\begin{definition}
Let $m_1,\ldots,m_k$ as in Definition \ref{def:frob}. The \emph{$\frac{1}{h}$-Frobenius number}\footnote{As far as the author is aware, in the literature this concept usually does not have a distinct name. Our choice has two reasons: first, this notion is closely related to quotients of numerical semigroups by an integer \cite[Chapter 5]{rosales}, usually written $\frac{S}{k}$ for a numerical semigroup $S$ and positive integer $k$; in this setting, the literature simply calls $F^k(a_0,\ldots,a_n)/k$ the Frobenius number of such quotients (and writes $F(S/k)$), as it is unambiguous. In our case, we work at the same time with different values of $h$ so we want to make the notation clearer. Secondly, the more natural name of $h$-Frobenius number is already common in literature to refer to the generalised Frobenius number (written $F_h(a_0,\ldots,a_n)$), which is an unrelated concept.}
of $m_1,\ldots,m_k$ (or of $S$) is defined as
$$F^h(S) = F^h(m_1,\ldots,m_k) = \max (h\mathbb{Z} \setminus (h\mathbb{Z}\cap S)).$$
\end{definition}
Not much differently from before, the $\frac{1}{h}$-Frobenius number is the largest multiple of $h$ which cannot be written as a combination of $m_1,\ldots,m_k$, and if $m_1,\ldots,m_k \nmid h$ then $F^h(S) \in G(S)$, otherwise $F^h(S) = -h$.

By abuse of notation, we still talk about the Frobenius number (or $\frac{1}{h}$-Frobenius number) when $g=\gcd(m_1,\ldots,m_k)>1$, even though it is, in general, not well defined; in that case, we mean the following: if $\gcd(g,h)=1$, then
$$F^h(m_1,\ldots,m_k) = gF^h(m_1/g,\ldots,m_k/g).$$ while if $g \mid h$, 
$$F^h(m_1,\ldots,m_k) = g F^{h/g}(m_1/g,\ldots,m_k/g).$$
In general, let $G=\gcd(g,h)$, then
$$F^h(m_1,\ldots,m_k) = gF^{h/G}(m_1/g,\ldots,m_k/g).$$

Given these definitions, we are ready to generalise Conjecture \ref{conj:WCI} to $h$-regular pairs. This was first stated in \cite{tasin}, but only the regular case ($h$=1) was considered. Here, we study the $h$-regular case as well, and remove the redundant or unnecessary hypotheses used in the original statement.

\begin{conjecture}[{\cite[Conjecture 4.8]{tasin}}]
\label{conjecture:frob}
Let $(d_1,\ldots,d_c; a_0,\ldots,a_n)$ be a regular pair such that $ c \leq n$ and $a_i \neq 1$ for any $i$. Then,
$$\delta(d;a) \geq F(a_0,\ldots,a_n).$$
\end{conjecture}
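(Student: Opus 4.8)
The plan is to recast the inequality as a representability statement and then induct on the codimension $c$. Since $F(a_0,\ldots,a_n)$ is by definition the largest integer missing from the numerical semigroup $S=\langle a_0,\ldots,a_n\rangle$, the inequality $\delta(d;a)\ge F(a_0,\ldots,a_n)$ is equivalent to the assertion that every integer $k>\delta(d;a)$ lies in $S$. Here the hypothesis $c\le n$ is exactly what makes $F$ finite: taking $I=\bar n$ in the regularity condition, if $\gcd(a_0,\ldots,a_n)>1$ we would need $n+1$ distinct degrees divisible by this gcd, which is impossible since there are only $c\le n$ of them, so $\gcd(a_0,\ldots,a_n)=1$. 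The goal thus becomes: under regularity and $a_i\ge 2$, the semigroup $S$ contains every integer above $\delta$.

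First I would isolate the structural consequence of regularity that drives everything. For any prime $p$, writing $I_p=\{i:p\mid a_i\}$ and $J_p=\{j:p\mid d_j\}$ and applying condition (i) of regularity to $I=I_p$ (condition (ii) is vacuous since $h=1$), one gets $|J_p|\ge|I_p|$: each prime divides at least as many degrees as weights. In the base case $c=1$ this is decisive. No two weights can share a prime factor, so $a_0,\ldots,a_n$ are pairwise coprime; applying (i) to each singleton $\{i\}$ forces $a_i\mid d_1$, whence $d_1\ge\prod_i a_i$. Then $\delta=d_1-\sum_i a_i\ge\prod_i a_i-\sum_i a_i$, and since $F(a_0,\ldots,a_n)\le F(a_0,a_1)=a_0a_1-a_0-a_1\le\prod_i a_i-\sum_i a_i$ (monotonicity of $F$ under enlarging the semigroup, plus the two-generator formula), the case $c=1$ follows.

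For the inductive step I would choose a prime $p$ dividing one of the weights and pass to the smaller regular pairs furnished by Lemma \ref{lemma:h-prop} and Corollary \ref{cor:h-prop}, namely $(d^p;a^p)$ and $(d(p)/p;a(p)/p)$, both again regular because $p\nmid h=1$. The amplitude splits cleanly: a direct computation gives $\delta(d;a)=\delta^p+\tfrac{p-1}{p}\,\delta(p)$, separating off the contribution of the $p$-divisible part. In parallel I would feed this into a Brauer--Johnson style reduction of the Frobenius number along the gcd of the weights divisible by $p$ (cf. Proposition \ref{prop:br}), which expresses or bounds $F(a_0,\ldots,a_n)$ in terms of the Frobenius numbers of $a(p)/p$ and of the remaining weights. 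Matching the two decompositions and invoking the inductive hypothesis on the smaller pairs should then yield $\delta\ge F$.

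The main obstacle is the Frobenius side of this matching. For three or more generators there is no closed formula for $F$ (Theorem \ref{thm:noformula}), so the reduction along $p$ is only an inequality whose shape depends sensitively on the gcd configuration of the weights, in particular on how many weights are divisible by $p$ and whether the complementary weights are coprime. This is exactly where a uniform argument fractures into cases, and where the combinatorial inequality in the quasi-smoothness condition (Q2) of Proposition \ref{prop:WCI} must be used to guarantee that enough degrees are divisible by the relevant gcds to keep $\sum_j d_j$ large. Controlling these cases, together with the degenerate possibilities where dividing by $p$ produces a weight equal to $1$ or pushes the reduced pair out of the range $c\le n$, is the crux; it is presumably why one establishes the statement only up to $c\le 3$ rather than in full generality.
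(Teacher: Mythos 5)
The statement you set out to prove is stated in the paper as a \emph{conjecture}, and the paper itself establishes it only for $c\le 3$ (Theorems \ref{thm:codim1}, \ref{thm:codim2}, \ref{thm:codim3} and Corollary \ref{cor:maincor}); it is open for $c\ge 4$. Measured against that, what your proposal actually proves is the case $c=1$: the observations that $\gcd(a_0,\ldots,a_n)=1$, that each prime divides at least as many degrees as weights, and that in codimension one the weights are pairwise coprime so $d_1\ge\prod_i a_i$ and $\delta\ge F(a_i,a_j)$, are all correct and coincide with the paper's argument for Theorem \ref{thm:codim1}. The identity $\delta(d;a)=\delta^p+\tfrac{p-1}{p}\,\delta(p)$ is also correct (it is the paper's $\delta=p\delta^p-(p-1)\bar\delta(p)$ in disguise).

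The genuine gap is the inductive step, which you describe but do not carry out and yourself concede ``fractures into cases.'' Matching the amplitude decomposition against a Brauer-style reduction of $F(a_0,\ldots,a_n)$ is exactly the hard part: Lemma \ref{lemma:degred} applies only when the non-$p$-divisible part of the weights is a single element, and to handle general gcd configurations the paper must prove new recursive inequalities (Lemmas \ref{lemma:recfrob1g} and \ref{lemma:recfrob2g}), a reduction to coprime degrees (Lemma \ref{lemma:primered}), and then run a lengthy case analysis on the partition of the weights according to which degree they divide --- and even then only reaches $c=3$. None of these inequalities is established in your proposal, so the induction does not close even for $c=2$. A smaller but real misconception: condition (Q2) of Proposition \ref{prop:WCI} plays no role here, since regular pairs are defined only by the divisibility condition of Definition \ref{def:pair}, which is strictly weaker than quasi-smoothness; you cannot invoke (Q2) to ``keep $\sum_j d_j$ large.'' To complete the argument within the paper's framework you would need to prove the recursive Frobenius bounds and then treat $c=2$ and $c=3$ separately along the lines of Lemma \ref{lemma:primered} and Theorems \ref{thm:codim2}--\ref{thm:codim3}.
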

Note that by definition, in this case we have that $\gcd(a_0,\ldots,a_n)=1$.

\begin{conjecture}
\label{conjecture:h-frob}
Let $(d_1,\ldots,d_c; a_0,\ldots,a_n)$ be a $h$-regular pair, such that $ c \leq n$ and $a_i \nmid h$ for any $i$. Then, 
$$\delta(d;a) \geq F^h(a_0,\ldots,a_n).$$
\end{conjecture}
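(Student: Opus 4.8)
The plan is to prove the equivalent inequality $\sum_j d_j \ge \sum_i a_i + F^h(a_0,\ldots,a_n)$ by exploiting the fact that $h$-regularity forces the degrees to be divisible by the weights. The key structural input is the singleton and small-subset instances of Definition \ref{def:pair}: for a prime $p$ with $p\nmid h$, applying the definition to $I_p=\{i:p\mid a_i\}$ shows that at least $|I_p|$ of the degrees are divisible by $p$, i.e.\ $|J_p|\ge|I_p|$; taking $I$ to be a singleton $\{i\}$ with $a_i\nmid h$ shows that each such weight divides some degree. Thus the degrees ``dominate'' the weights in a divisibility sense, and this surplus is what must be converted into the Frobenius bound. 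I would first dispose of the dependence on $h$: either invoke the equivalence of the regular and $h$-regular conjectures (for $c\le 3$ this is Corollary \ref{cor:noasterisk}, and more generally Lemma \ref{lemma:reduction}), or induct on the number of primes dividing $h$, repeatedly dividing by a prime $p\mid h$ via Lemma \ref{lemma:h-prop}, so that the main induction runs in the regular setting $h=1$.

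The heart of the argument is an induction on the weights, say on $\sum_i a_i$, peeling off one prime $p$ at a time. The arithmetic backbone is the exact identity
$$\delta(d;a) = \delta^p + \frac{p-1}{p}\,\delta(p),$$
obtained by comparing $\delta$ with the reduced pairs $(d^p;a^p)$ and $(d(p);a(p))$; here $\delta(p)>0$ by Proposition \ref{prop:delta>0} applied to the $h$-regular pair $(d(p);a(p))$ of Corollary \ref{cor:h-prop}, whose weights still satisfy $a_i\nmid h$. Since $(d^p;a^p)$ is again $h$-regular (Lemma \ref{lemma:h-prop}) with $c\le n$ preserved and strictly smaller weights, the inductive hypothesis gives $\delta^p \ge F^{h'}(a^p)$. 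Combining this with the identity, the statement $\delta\ge F^h(a)$ reduces to the recursive Frobenius inequality
$$F^h(a_0,\ldots,a_n) \le F^{h'}(a^p) + \frac{p-1}{p}\,\delta(p).$$
This is exactly the point where the surplus $|J_p|\ge|I_p|$, hence the lower bound on $\delta(p)=\sum_{j\in J_p} d_j - \sum_{i\in I_p} a_i$, must be fed in. I would prove this inequality by an Apéry-set computation: representing a multiple of $h$ through its residue modulo $p$ separates the contribution of the $p$-divisible weights (generating $p\langle a(p)/p\rangle$) from the rest, and bounding the resulting Apéry elements yields a Brauer-type estimate whose correction term is controlled by $\tfrac{p-1}{p}\delta(p)$.

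A sanity check that this is the right target: for two coprime weights $a_0,a_1$ with $p\mid a_0$, one computes $F^h(a)-F(a^p)=\tfrac{p-1}{p}a_0(a_1-1)=\tfrac{p-1}{p}\delta(p)$, so the recursion is an \emph{equality} in the simplest case. The base case of the induction, when the weights are pairwise coprime, would then be handled directly: singleton regularity forces each weight to divide a degree, and since $c\le n$ there are strictly fewer degrees than weights, so the pigeonhole principle yields a degree divisible by a product of two coprime weights; comparing with the two-generator formula $F(a_0,a_1)=a_0a_1-a_0-a_1$ together with Brauer's bound (Proposition \ref{prop:br}) closes the case.

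The main obstacle is the recursive Frobenius inequality itself for many weights. Unlike $\delta$, the Frobenius number does not split additively, so the Apéry/Brauer correction term must be matched against $\tfrac{p-1}{p}\delta(p)$ essentially term-by-term, and this is where the \emph{distinctness} of the degrees in condition (i) of Definition \ref{def:pair} is both essential and hardest to use, since several primes may have to share the same degrees. A further difficulty is that the reduction need not preserve all hypotheses: after dividing by $p$ a reduced weight may become $1$ (equivalently, divide $h'$), or the inequality $c\le n$ may become slack in a way that weakens the inductive bound, forcing a case analysis. This is presumably why a clean uniform argument is out of reach and the theorem is established only for $c\le 3$, case-by-case, using recursive Frobenius bounds sharper than Brauer's.
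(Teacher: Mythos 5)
There is a genuine gap at the heart of your induction: the ``recursive Frobenius inequality'' $F^h(a_0,\ldots,a_n) \le F^{h'}(a^p) + \tfrac{p-1}{p}\,\delta(p)$ is false in general, not merely hard to prove. The quantity $\delta(p)$ only sees the degrees divisible by $p$, and $h$-regularity allows those degrees to be as small as the weights that must divide them, so $\delta(p)$ can vanish while $F^h(a)-F^{h'}(a^p)$ stays strictly positive. Concretely, $(d;a)=(6,5;2,3,5)$ is a regular pair with $c=2\le n=2$ and all $a_i\neq 1$ (pairs with $d_i=a_j$ are explicitly allowed in the paper's conventions); taking $p=5$ gives $J_5=\{2\}$, $\delta(5)=5-5=0$ and $a^5=(2,3,1)$, so your inequality would require $F(2,3,5)=1\le F(2,3,1)=-1$. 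Your sanity check succeeds only because in codimension $1$ every weight divides the unique degree, forcing $d_1\ge a_0a_1$; once $c\ge 2$ the weight divisible by $p$ may sit on a different degree from the others, the surplus $|J_p|\ge|I_p|$ gives no useful lower bound on $\delta(p)$, and (as the example also shows) a reduced weight equal to $1$ makes $F(a^p)=-1$, so the inductive hypothesis carries no information back up. So the inductive engine you propose would have to be replaced, or at least supplemented by a separate argument precisely in the cases where it breaks.

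It is also worth noting that the paper's actual argument (which establishes the statement only for $c\le 3$; the general conjecture remains open) is organised differently from your plan. The operation $(d;a)\mapsto(d^p;a^p)$ is applied only when $p$ divides \emph{every} degree (Lemma \ref{lemma:primered}), exactly the situation where $J_p=\bar c$ makes $\delta(p)$ large and where the exact classical identity of Lemma \ref{lemma:degred} is available. The core of the proof instead partitions the weights according to which degree they divide (the sets $A_j$ in Theorem \ref{thm:codim3}), inducts on $k=\min_j\{|A_j|:|A_j|>1\}$, and feeds in ``splitting'' bounds on Frobenius numbers (Lemmas \ref{lemma:recfrob1g}, \ref{lemma:recfrob1} and \ref{lemma:recfrob2g}) of the shape $F(a)\le F^{g}(a_{I})+F^{G}(a_{I'},g)+gG$, which decompose the set of weights rather than dividing individual weights by a prime. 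Your reduction to the regular case via Corollary \ref{cor:noasterisk} and your observation that pairwise-coprime weights are the natural base case do match the paper; the middle of the argument does not.
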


Before moving to a brief examination of the problem of computing $F(m_1,\ldots,m_s)$, it is worth remarking three facts about Conjectures \ref{conjecture:frob} and \ref{conjecture:h-frob}.
\hfill
\begin{itemize} 
\item{By Proposition \ref{prop:delta>0}, under the hypotheses of Conjectures \ref{conjecture:frob} and \ref{conjecture:h-frob}, $\delta(d;a)>0$ so we are only taking the general type case into consideration.}
\item{For both conjectures, we ask the condition $c \leq n$ as in the original statement of \cite{tasin}; note that, since the description of $\text{Pic}X$ as a cyclic group in Property \ref{property:WCI} only holds when $\dim X \geq 3$, Conjectures \ref{conjecture:frob} and \ref{conjecture:h-frob} correctly generalise Conjecture \ref{conj:WCI} only when $\dim X \leq n-3$, that is $c \leq n-3$. Still, we expect that using this weaker assumption on $c$ does not change the validity of the conjecture.}
\item{On a similar note, we usually allow pairs with $d_i=a_j$ for some $i,j$ even if the problem on weighted complete intersections avoids the linear cone case. In fact, if for a pair $(d;a)$ we have $\delta(d;a) \geq F^h(a)$, then adding a pair of identical weights and degrees does not change the inequality: the amplitude of the pair remains the same and the $\frac{1}{h}$-Frobenius number does not increase.}
\end{itemize}
\subsection{About the Frobenius problem}

Given a set of coprime natural numbers $a_1,\ldots,a_n$, the problem of computing its Frobenius number is known in literature as the \emph{Frobenius problem}.

\begin{problem}[Frobenius problem]
Let $S = \langle a_1,\ldots,a_n \rangle$ be a numerical semigroup. Compute $F(S)$, the Frobenius number of $S$.
\end{problem}

Despite the apparent simplicity, it turns out that the computation of $F(S)$ is hard under many points of view. On the positive side, an explicit answer is known for $n=2$: in fact, $F(a_1,a_2)=[a_1,a_2]-a_1-a_2$, where $[a_1,a_2]=\lcm(a_1,a_2)$; interestingly, for larger $n$ it is known that there is no general polynomial relation between a set of coprime natural numbers and their Frobenius number.

\begin{theorem}[{\cite{cur90}}]
\label{thm:noformula}
    Let $$\text{$A=\{(a_1,a_2,a_3) \in \mathbb{N}^3 \mid a_1 < a_2 < a_3,a_1,a_2$ are prime and $a_1,a_2 \nmid a_3\}$}.$$
    Then, there is no nontrivial polynomial $P \in \mathbb{C}[X_1,X_2,X_3,Y]$ such that $$P(a_1,a_2,a_3,F(a_1,a_2,a_3))=0$$ for all $(a_1,a_2,a_3) \in A$. In other words, it is not possible to find a polynomial relation between $a_1,a_2,a_3$ and $F(a_1,a_2,a_3)$ holding for all semigroups of given embedding dimension $n$ (that is, the cardinality of the minimal set of generators). 
\end{theorem}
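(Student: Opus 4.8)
The plan is to rephrase the statement geometrically. Let $\Gamma = \{(a_1,a_2,a_3,F(a_1,a_2,a_3)) : (a_1,a_2,a_3)\in A\} \subset \mathbb{A}^4$ be the graph of the Frobenius function over $A$; the assertion is precisely that $\Gamma$ is Zariski dense in $\mathbb{A}^4$. This is a genuinely arithmetic statement: although $\Gamma$ is the graph of a function, that function is only defined on the integer points of $A$, so the Zariski closure of $\Gamma$ need not have dimension $3$. I would therefore argue by contradiction: suppose $P\neq 0$ vanishes on all of $\Gamma$, and aim to show that $P$ must then be divisible by infinitely many distinct factors, contradicting $\deg P<\infty$.

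The basic tool is a \emph{dense family forces a factor} principle. First one checks that $A$ is Zariski dense in $\mathbb{A}^3$: a polynomial vanishing on all admissible triples must, for each fixed prime $p$, vanish as a polynomial in the remaining two coordinates (using that there are infinitely many primes $q$ and, for fixed $p,q$, infinitely many admissible $a_3$), and iterating forces it to be $0$. Now suppose a subfamily of $A$ lies on a hypersurface $\{Y = G(X_1,X_2,X_3)\}$, i.e.\ $F=G$ there, and that this subfamily is itself Zariski dense in $\mathbb{A}^3$. Substituting $Y=G$ into $P$ yields a polynomial in $X_1,X_2,X_3$ vanishing on a dense set, hence identically zero; therefore the irreducible polynomial $Y-G$ divides $P$.

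This already produces one factor for free. If $a_3 > a_1a_2 - a_1 - a_2$ then $a_3\in\langle a_1,a_2\rangle$, so $a_3$ is redundant and $F(a_1,a_2,a_3)=a_1a_2-a_1-a_2$. The triples $(p,q,c)$ with $p<q$ prime, $p,q\nmid c$ and $c$ large form a Zariski-dense subfamily lying on $\{Y = X_1X_2 - X_1 - X_2\}$, so $(Y - X_1X_2 + X_1 + X_2)$ divides $P$. The difficulty is that, after removing this factor, the remaining part of $\Gamma$ — the \emph{essential} triples, where $a_3$ is a gap of $\langle a_1,a_2\rangle$ and all three generators genuinely matter — is exactly the regime in which $F(a_1,a_2,a_3)$ has no closed form.

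To finish, I would produce further dense subfamilies carrying \emph{different} relations $Y=G$, forcing new linear factors $Y-G$, and exhibit infinitely many distinct such $G$. Concretely I would analyse $F(p,q,c)$ through its Ap\'ery set modulo the prime $p$: writing $w_i$ for the least element of $\langle p,q,c\rangle$ congruent to $i \bmod p$, one has $F = \max_i w_i - p$, and each $w_i$ is an explicit piecewise-linear (``sawtooth'') function of $c$ whose breakpoints and slopes are governed by the residue $c \bmod p$ and by the continued-fraction expansion of $q/p$ (the classical description of the Ap\'ery set of a three-generator semigroup). The freely varying, genuinely $p$-dependent period of this sawtooth is what no fixed polynomial relation can absorb. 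Making this quantitative — selecting, for a hypothetical $P$ of degree $N$, admissible triples along which $F$ realises $N+1$ incompatible linear behaviours — is the crux and the step I expect to be hardest, since it requires controlling the Frobenius number of honest three-generator semigroups under the primality constraints defining $A$. These constraints fix the parities of $p$ and $q$, so one cannot simply invoke the elementary parity-based non-polynomiality of the arithmetic-progression formula of Roberts, and the obstruction must be located in the residue-dependent structure of the Ap\'ery set itself.
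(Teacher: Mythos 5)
The paper does not prove this statement; it is imported verbatim from Curtis [Cur90], so there is no internal proof to compare against. Judged on its own terms, your proposal is a correct setup followed by a genuine gap at the decisive step. The reformulation (density of the graph $\Gamma$ in $\mathbb{A}^4$), the density of $A$ in $\mathbb{A}^3$, the principle that a Zariski-dense subfamily on which $F=G$ forces $(Y-G)\mid P$, and the first factor $Y-X_1X_2+X_1+X_2$ coming from triples with $a_3>a_1a_2-a_1-a_2$ are all fine. But everything up to that point only shows that $P$ is a multiple of $Y-X_1X_2+X_1+X_2$; it does not rule out $P$ being exactly that polynomial. The entire content of the theorem is concentrated in the step you defer: exhibiting, for a hypothetical $P$ with $\deg_Y P=N$, at least $N+1$ pairwise distinct polynomials $G_0,\dots,G_N$ such that each $G_i$ agrees with $F$ on a subfamily that is Zariski dense in $\mathbb{A}^3$ \emph{and} lies in the locus where $F\neq G_0,\dots,G_{i-1}$ (so that the quotient after dividing out the earlier factors still vanishes there). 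Without producing these families the argument proves nothing beyond the trivial divisibility.

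Moreover, the route you sketch for producing them is exactly where the difficulty hides. The Ap\'ery-set description gives, for \emph{fixed} $(p,q)$, a piecewise-linear expression for $F(p,q,c)$ in $c$ whose pieces are selected by $c\bmod p$ and by data depending on $p$ and $q$; a family defined by such residue conditions is not uniform in $(p,q)$, and a family cut out by an exact algebraic relation among $a_1,a_2,a_3$ is a hypersurface, hence \emph{not} Zariski dense in $\mathbb{A}^3$ and useless for forcing a factor. So one must find inequality-type (hence dense) regimes, uniform over all large prime pairs, in which $F$ is given by a single closed polynomial formula, and infinitely many inequivalent such regimes. That is precisely the construction Curtis supplies and your proposal does not; as written, the proof is incomplete.
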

Even from a computational point of view, the Frobenius problem is known to be hard, since it is NP-hard (the state of the art in the known computational aspects and algorithms can be found in \cite{alfonsin}).

Since neither formulas nor algorithms easily give the answer to the problem, bounds on $F(S)$ are of large interest. For instance, the following result is one of the best known upper bounds. It also computes exactly the Frobenius number of a semigroup if the generators form a sequence with a particular structure.

\begin{proposition}[\cite{br42}, \cite{br54}]
\label{prop:br}
Let $(a_1,\ldots,a_n)$ be coprime positive numbers, and let $g_j=\gcd(a_1,\ldots,a_j).$ Then,
 $$F(a_1,\ldots,a_n) \leq \sum_{j=2}^n \frac{g_{j-1}}{g_j}a_j - \sum_{i=1}^n a_i.$$
Equality holds if and only if $a_1,\ldots,a_n$ form a \emph{telescopic sequence}: if we write $S_{i-1}$ for the semigroup generated by $a_1/g_{i-1},\ldots,a_{i-1}/g_{i-1}$, then $a_i/g_i \in S_{i-1}$ for all $i=2,\ldots,n$.

\end{proposition}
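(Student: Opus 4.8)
The plan is to prove the inequality by induction on the number of generators $n$, peeling off the last weight $a_n$. Set $g = g_{n-1} = \gcd(a_1,\ldots,a_{n-1})$; since $g_n=1$ we have $\gcd(g,a_n)=1$, and the truncated tuple $(a_1/g,\ldots,a_{n-1}/g)$ is again coprime, with Frobenius number $F_0 \coloneqq F(a_1/g,\ldots,a_{n-1}/g)$. The heart of the argument is the reduction inequality
$$F(a_1,\ldots,a_n) \leq g\,F_0 + (g-1)a_n,$$
whose right-hand side I call $M$. I would prove it directly by a congruence argument: given any integer $N>M$, choose $t\in\{0,\ldots,g-1\}$ with $t a_n \equiv N \pmod g$ (possible since $a_n$ is invertible modulo $g$), and write $N - t a_n = g N'$. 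Because $t\le g-1$ we get $gN' = N - t a_n > M-(g-1)a_n = gF_0$, so $N' > F_0$ and hence $N'$ is a non-negative combination of $a_1/g,\ldots,a_{n-1}/g$. Multiplying by $g$ and adding $t a_n$ exhibits $N$ as a non-negative combination of $a_1,\ldots,a_n$, proving $N \in \langle a_1,\ldots,a_n\rangle$.

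Next I would feed the inductive hypothesis into $M$. Writing the nested gcds of the truncated tuple as $g_j/g$, the induction bound reads $gF_0 \le \sum_{j=2}^{n-1}\frac{g_{j-1}}{g_j}a_j - \sum_{i=1}^{n-1}a_i$, and combining with the reduction inequality gives
$$F(a_1,\ldots,a_n) \le \sum_{j=2}^{n-1}\frac{g_{j-1}}{g_j}a_j - \sum_{i=1}^{n-1}a_i + (g-1)a_n.$$
Since $g_n=1$, the missing Brauer term is $\frac{g_{n-1}}{g_n}a_n = g\,a_n$, and $g a_n - a_n = (g-1)a_n$ shows the right-hand side equals $\sum_{j=2}^{n}\frac{g_{j-1}}{g_j}a_j - \sum_{i=1}^{n}a_i$, as required. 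The base case $n=2$ is the classical $F(a_1,a_2)=\lcm(a_1,a_2)-a_1-a_2$ (here $g_1=a_1$, $g_2=1$), which equals the claimed bound; the degenerate cases $g=1$ or $F_0=-1$ are absorbed transparently by the same computation.

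For the equality statement I would run the induction so that it proves the bound together with its equality characterization, and then track when each of the two inequalities above is tight. The inductive inequality $gF_0 \le \sum_{j=2}^{n-1}\frac{g_{j-1}}{g_j}a_j - \sum_{i=1}^{n-1}a_i$ is an equality exactly when the truncated tuple is telescopic, by the induction hypothesis. The reduction inequality $F(a_1,\ldots,a_n)\le M$ is an equality iff $M\notin\langle a_1,\ldots,a_n\rangle$; unwinding the congruence analysis, $M\equiv -a_n \pmod g$ forces any representation to use $x_n\equiv g-1 \pmod g$, and one checks that $M$ is representable iff $F_0 - k a_n \in T \coloneqq \langle a_1/g,\ldots,a_{n-1}/g\rangle$ for some $k\ge 1$. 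Thus the reduction step is tight iff $F_0 - k a_n\notin T$ for all $k\ge 1$. I would then reconcile ``both inequalities tight'' with the telescopic condition for the full tuple, which is precisely ``truncated tuple telescopic'' together with ``$a_n\in T$'' (matching $a_n/g_n = a_n$ against $S_{n-1}=T$): if $a_n\in T$ then $F_0-ka_n\in T$ would force $F_0\in T$, which is impossible, yielding the forward direction; conversely, when the truncated tuple is telescopic the semigroup $T$ is symmetric, so $F_0 - a_n \notin T$ is equivalent to $a_n\in T$, yielding the converse.

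The one external input is that telescopic (equivalently, free) numerical semigroups are symmetric, i.e.\ satisfy $x\in T \iff F(T)-x\notin T$; this is standard \cite{rosales}, and I would either cite it or reprove it inside the same induction. I expect the main obstacle to be the bookkeeping in the equality case: carefully verifying that the telescopic condition for $(a_1,\ldots,a_n)$ splits exactly as ``$(a_1/g,\ldots,a_{n-1}/g)$ telescopic'' plus ``$a_n\in T$'', and noticing that symmetry is needed only at $k=1$, whereas the forward implication must rule out representability of $M$ for all $k\ge 1$. The inequality itself, by contrast, is a short and self-contained congruence computation.
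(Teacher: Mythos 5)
The paper does not prove Proposition~\ref{prop:br}; it is quoted as a classical result of Brauer and Brauer--Seelbinder, so there is no internal proof to compare against. Your argument is, as far as I can check, correct, and it is worth noting that it reassembles ingredients the paper does develop elsewhere: your reduction inequality $F(a_1,\ldots,a_n)\le gF_0+(g-1)a_n$ is exactly the $k=n-1$ instance of Lemma~\ref{lemma:recfrob1} (equivalently a weakening of the equality in Lemma~\ref{lemma:degred}), and your congruence-class proof of it is the same technique the paper uses to prove Lemmas~\ref{lemma:recfrob1g} and~\ref{lemma:recfrob2g}. The induction then telescopes cleanly because $\gcd$ and the Brauer sum both scale by $g$ under the truncation, and the base case $n=2$ is Sylvester's formula. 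For the equality statement, your bookkeeping is right: equality forces tightness both in the inductive step (truncated tuple telescopic, by the inductive characterization) and in the reduction step ($M\notin\langle a_1,\ldots,a_n\rangle$, which via the residue of $x_n$ modulo $g$ is equivalent to $F_0-ka_n\notin T$ for all $k\ge 1$, the case $k=0$ being excluded since $F_0\notin T$); the splitting of ``telescopic'' into ``truncated telescopic plus $a_n\in T$'' matches the definition since $g_n=1$. The one genuinely external input is that a telescopic (free) numerical semigroup is symmetric, needed only for the implication ``$F_0-a_n\notin T\Rightarrow a_n\in T$''; this is standard and available in \cite{rosales}, but it should be cited explicitly rather than left implicit, since without it the converse direction of the equality characterization does not close. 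Minor degenerate cases ($g=1$, or $F_0=-1$ when some $a_i=g$) are indeed absorbed by the same computation as you claim.
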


As pointed out in \cite[Section 4.1]{tasin}, this bound has a natural relation with regular pairs: in fact, write $d_j=\frac{g_{j-1}}{g_{j}}a_j$, then the pair $(d_2,\ldots,d_{n};a_1,\ldots,a_n),$ is regular and its amplitude is precisely the bound of Proposition \ref{prop:br}. In general, this pair does not achieve the minimal value of $\delta$ for a given set of weights. For example, the pair $$(d;a)=(6p,6q,pq;2p,3p,2q,3q)$$ with $p,q$ primes large enough satisfies $$\delta(d;a)=F(2p,3p,2q,3q)=pq+p+q,$$ but the sequence $2p,3p,2q,3q$ is not telescopic (independently from how it is ordered), hence $\delta(d;a) \neq F(2p,3p,2q,3q)$. Thus, in general, for a given set of weights $a_0,\ldots,a_n$, Conjecture \ref{conjecture:frob} gives a better bound on $F(a_0,\ldots,a_n)$ than Proposition \ref{prop:br}.

\section{Properties of $h$-regular pairs and Frobenius numbers}
\subsection{Properties of Frobenius numbers}
The following is a simple observation; for the sake of readability, even though it will be used multiple times, we will not explicitly refer to it.

\begin{lemma}
\label{lemma:frobineq}
Let $a_0,\ldots,a_n$ and $a_0'$ be positive integers such that $a_0 \mid a_0'$. Suppose that $g\coloneqq\gcd(a_0,\ldots,a_n)=\gcd(a_0',a_1,\ldots,a_n)$. Then, for any $h>0$,
    $$F^h(a_0',a_1,\ldots,a_n) \geq F^h(a_0,\ldots,a_n).$$.
\end{lemma}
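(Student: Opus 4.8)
The plan is to reduce the whole statement to the elementary observation that replacing $a_0$ by a multiple $a_0'$ can only shrink the monoid it generates. First I would set $S = \langle a_0,a_1,\ldots,a_n\rangle$ and $S' = \langle a_0',a_1,\ldots,a_n\rangle$. Since $a_0 \mid a_0'$ we have $a_0' \in S$, and the remaining generators $a_1,\ldots,a_n$ are shared, so every generator of $S'$ lies in $S$; hence $S' \subseteq S$. This inclusion is the entire content of the lemma, and everything else is bookkeeping around the definition of $F^h$.

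Next I would dispose of the coprime case $g = 1$. Here both $S$ and $S'$ are numerical semigroups with cofinite complement, so $F^h(S) = \max(h\mathbb{Z}\setminus S)$ and $F^h(S') = \max(h\mathbb{Z}\setminus S')$ are genuinely defined. From $S' \subseteq S$ we obtain $\mathbb{Z}\setminus S \subseteq \mathbb{Z}\setminus S'$, so if $x = F^h(S)$ is the largest multiple of $h$ not lying in $S$, then $x \notin S'$ as well. Thus $x$ is a multiple of $h$ avoided by $S'$, which forces $F^h(S') \geq x = F^h(S)$, giving the desired inequality when $g = 1$.

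Finally, for $g > 1$ I would invoke the abuse-of-notation definition of the $\frac{1}{h}$-Frobenius number recalled just before the statement. Writing $G = \gcd(g,h)$, both sides are rescalings of a coprime instance, $F^h(a_0,\ldots,a_n) = g\,F^{h/G}(a_0/g,\ldots,a_n/g)$ and likewise for the primed tuple, where $\gcd(a_0/g,\ldots,a_n/g) = 1 = \gcd(a_0'/g,a_1/g,\ldots,a_n/g)$ precisely because the two gcd's in the hypothesis agree. Since $g$ divides both $a_0$ and $a_0'$ and $a_0 \mid a_0'$, we still have $a_0/g \mid a_0'/g$, so the coprime case applies to the divided tuples with $h/G$ in place of $h$; multiplying the resulting inequality by $g$ yields the claim. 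The only point requiring genuine care is this last reduction: one must verify that the common-gcd hypothesis is exactly what guarantees that the two applications of the abuse-of-notation formula use the same scaling factor $g$ and the same rescaled index $h/G$, so that the inequality is preserved under rescaling. Apart from that verification, the proof is immediate.
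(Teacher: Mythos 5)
Your proposal is correct and follows essentially the same route as the paper: establish the monoid inclusion $\langle a_0',a_1,\ldots,a_n\rangle \subseteq \langle a_0,\ldots,a_n\rangle$ from $a_0\mid a_0'$, conclude the $g=1$ case directly from the definition of $F^h$, and handle $g>1$ by rescaling through the formula $F^h(\cdot)=gF^{h/G}(\cdot/g)$ with $G=\gcd(g,h)$, noting that the hypothesis on the gcd's makes both sides use the same $g$ and $G$. No further comment is needed.
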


\begin{proof}
First, consider the case $g=1$.
Since $a_0 \mid a_0'$,
    $$\langle a_0',\ldots,a_n \rangle \subset \langle a_0,\ldots,a_n \rangle$$
    and the statement follows from the definition of $\frac{1}{h}$-Frobenius number.

For the general case, write $G=\gcd(g,h)$. Since by definition
$$F^h(a_0',\ldots,a_n) = gF^{h/G}(a_0'/g,\ldots,a_n/g)$$
and
$$F^h(a_0,\ldots,a_n) = gF^{h/G}(a_0/g,\ldots,a_n/g),$$
as $\gcd(a_0'/g,\ldots,a_n/g)=\gcd(a_0/g,\ldots,a_n/g)=1$, the statement follows as before.

\end{proof}

\begin{corollary}
\label{cor:multiples}
If $a_0,\ldots,a_n$ are positive integers, then for any positive integer $m>0$ such that $\gcd(a_0,\ldots,a_k, ma_{k+1},\ldots,ma_n)=\gcd(a_0,\ldots,a_k, a_{k+1},\ldots,a_n)$ for some $k\geq 0$, we have that for any $h >0$,
$$F^h(a_0,\ldots,a_k,ma_{k+1},\ldots,ma_n) \geq F^h(a_0,\ldots,a_n).$$
\end{corollary}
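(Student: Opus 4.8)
The plan is to reduce the inequality to an $(n-k)$-fold application of Lemma \ref{lemma:frobineq}, multiplying the weights $a_{k+1},\ldots,a_n$ by $m$ one at a time. For $0 \le j \le n-k$ I introduce the intermediate tuple
$$b^{(j)} = (a_0,\ldots,a_k,\, ma_{k+1},\ldots,ma_{k+j},\, a_{k+j+1},\ldots,a_n),$$
so that $b^{(0)} = (a_0,\ldots,a_n)$ and $b^{(n-k)} = (a_0,\ldots,a_k,ma_{k+1},\ldots,ma_n)$. Passing from $b^{(j)}$ to $b^{(j+1)}$ replaces the single entry $a_{k+j+1}$ by its multiple $ma_{k+j+1}$, and since the $\tfrac1h$-Frobenius number is insensitive to the ordering of the weights, Lemma \ref{lemma:frobineq} applies at this step as long as the gcd is preserved, i.e.\ $\gcd(b^{(j)}) = \gcd(b^{(j+1)})$.

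The crux of the argument is therefore to establish this gcd equality at \emph{every} intermediate step, rather than only between the two endpoints as furnished by the hypothesis. Writing $g_j = \gcd(b^{(j)})$, note that going from $b^{(j)}$ to $b^{(j+1)}$ only multiplies one entry by $m$: hence $g_j$ divides every entry of $b^{(j+1)}$ (it already divides the unchanged entries, and it divides $a_{k+j+1}$, so it divides $ma_{k+j+1}$ too), giving $g_j \mid g_{j+1}$ and thus the chain of divisibilities $g_0 \mid g_1 \mid \cdots \mid g_{n-k}$. The hypothesis of the corollary is precisely $g_0 = g_{n-k}$, so this chain must collapse: $g_0 = g_1 = \cdots = g_{n-k}$. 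In particular $\gcd(b^{(j)}) = \gcd(b^{(j+1)})$ for every $j$, exactly as needed.

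With the gcd condition secured at each step, Lemma \ref{lemma:frobineq} yields $F^h(b^{(j+1)}) \ge F^h(b^{(j)})$ for all $0 \le j < n-k$, and composing these inequalities gives
$$F^h(a_0,\ldots,a_k,ma_{k+1},\ldots,ma_n) = F^h(b^{(n-k)}) \ge F^h(b^{(0)}) = F^h(a_0,\ldots,a_n),$$
which is the assertion. I do not anticipate a genuine obstacle here: the only substantive point is the elementary divisibility-chain observation, which upgrades the endpoint equality $g_0 = g_{n-k}$ from the hypothesis to equality at each intermediate stage and thereby licenses the repeated use of Lemma \ref{lemma:frobineq}.
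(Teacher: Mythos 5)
Your proof is correct and follows the route the paper intends (the corollary is stated without proof, as an immediate consequence of Lemma \ref{lemma:frobineq} applied once per modified weight). The divisibility-chain observation $g_0 \mid g_1 \mid \cdots \mid g_{n-k}$ together with $g_0 = g_{n-k}$ is exactly the right way to secure the gcd hypothesis at each intermediate step, so the iterated application of the lemma is fully justified.
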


Next, we have a classical result that, in some cases, relates the Frobenius number of a set of weights to the Frobenius number of a set of smaller weights.

\begin{lemma}[{\cite[Lemma 3.1.7]{alfonsin}}]
\label{lemma:degred}
Let $a_0,\ldots,a_n$ be positive integers such that $\gcd(a_0,\ldots,a_n)=1$, and let $g=\gcd(a_0,\ldots,a_{n-1})$. Then,

$$F(a_0,\ldots,a_n) = g F(\frac{a_0}{g},\ldots,\frac{a_{n-1}}{g},a_n) + (g-1) a_n.$$

\end{lemma}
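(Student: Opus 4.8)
The plan is to reduce membership in the large semigroup $S=\langle a_0,\ldots,a_n\rangle$ to membership in the smaller semigroup $T=\langle a_0/g,\ldots,a_{n-1}/g,\,a_n\rangle$, exploiting that $\gcd(g,a_n)=1$ (which is forced by $\gcd(a_0,\ldots,a_n)=1$, since $g\mid a_i$ for all $i<n$). Writing $b_i=a_i/g$ for $i<n$, every element of $S$ has the shape $g\bigl(\sum_{i<n}y_ib_i\bigr)+y_na_n$ with $y_i\ge 0$, so the part coming from $a_0,\ldots,a_{n-1}$ is always a multiple of $g$, while $a_n$ is coprime to $g$. Note also $\gcd(b_0,\ldots,b_{n-1})=1$, so $T$ is a genuine numerical semigroup and $F(T)=F(a_0/g,\ldots,a_{n-1}/g,a_n)$ is well defined.

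The key step is the following characterisation. For an integer $M$, let $x\in\{0,\ldots,g-1\}$ be the unique residue with $M\equiv x a_n\pmod g$ (unique because $\gcd(a_n,g)=1$); then I claim $M\in S$ if and only if $(M-xa_n)/g\in T$. For the forward direction, if $M=g w+y_na_n$ with $w=\sum_{i<n}y_ib_i\in\langle b_0,\ldots,b_{n-1}\rangle$, then reducing modulo $g$ forces $y_n\equiv x\pmod g$, so $y_n=x+gs$ with $s\ge 0$ and $(M-xa_n)/g=w+sa_n\in T$. Conversely, a representation $(M-xa_n)/g=\sum_{i<n}y_ib_i+sa_n$ in $T$ rescales to $M=\sum_{i<n}y_ia_i+(gs+x)a_n\in S$. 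The content of this equivalence is that raising the $a_n$-coefficient by multiples of $g$ in $S$ corresponds exactly to using the generator $a_n$ inside $T$, which is what turns the rigid condition $w\in\langle b_0,\ldots,b_{n-1}\rangle$ into the flexible condition $(M-xa_n)/g\in T$.

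With the characterisation in hand the formula is immediate. Set $N_0=gF(T)+(g-1)a_n$. For $M=N_0$ one computes $N_0\equiv -a_n\pmod g$, forcing $x=g-1$ and $(N_0-xa_n)/g=F(T)\notin T$, so $N_0\notin S$; hence $F(S)\ge N_0$. For any integer $M>N_0$ we have $x\le g-1$, so $M-xa_n>N_0-(g-1)a_n=gF(T)$ and thus $(M-xa_n)/g$ is an integer strictly greater than $F(T)$, hence lies in $T$; the characterisation then gives $M\in S$. Therefore $F(S)=N_0=gF(T)+(g-1)a_n$, which is the claim. The only delicate point is keeping track of $\langle b_0,\ldots,b_{n-1}\rangle$ versus $T$ in the characterisation; once that is set up correctly everything else is bookkeeping with the residue $x$. (Alternatively, the same result drops out of Selmer's formula $F=\max\mathrm{Ap}(\,\cdot\,,a_n)-a_n$ together with the identity $\mathrm{Ap}(S,a_n)=g\,\mathrm{Ap}(T,a_n)$, which follows from the same coprimality and rescaling, but the direct argument above avoids invoking Apéry sets.)
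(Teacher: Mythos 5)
Your proof is correct. Note, however, that the paper does not prove this lemma at all: it is quoted verbatim from the literature (Alfons\'{\i}n, \emph{The Diophantine Frobenius Problem}, Lemma 3.1.7, going back to Johnson and Brauer--Shockley), so there is no in-paper argument to compare against. Your argument is the standard one for this reduction: you sort integers by their residue class modulo $g$, use $\gcd(g,a_n)=1$ to pin down the coefficient of $a_n$ modulo $g$, and show that membership of $M$ in $S=\langle a_0,\ldots,a_n\rangle$ is equivalent to membership of $(M-xa_n)/g$ in $T=\langle a_0/g,\ldots,a_{n-1}/g,a_n\rangle$; this is exactly the Ap\'ery-set computation $\mathrm{Ap}(S,a_n)=g\,\mathrm{Ap}(T,a_n)$ that you mention in your closing remark, written out without that vocabulary. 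All the steps check out, including the key point that $y_n\equiv x\pmod g$ together with $y_n\ge 0$ and $0\le x\le g-1$ forces $y_n=x+gs$ with $s\ge 0$. One small thing worth making explicit: if some $a_i/g=1$ then $F(T)=-1$ under the paper's convention (Definition 4.1 and the remark following it), and your characterisation and maximality argument still go through verbatim in that degenerate case, so the formula holds there too.
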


\subsection{Reduction to degrees with no common factors}
We first need a lower bound on $\delta(d;a)$ for regular pairs satisfying the hypotheses of Conjecture \ref{conjecture:frob}.

\begin{lemma}[{\cite[Proposition 5.2]{tasin}}]
\label{lemma:deltabound}
Let $(d;a)=(d_1,\ldots,d_c;a_0,\ldots,a_n)$ be a regular pair such that $a_i \neq 1$ for any $i$. Then, $\delta(d;a) \geq c$.
\end{lemma}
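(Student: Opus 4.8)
The goal is the bound $\sum_{i=1}^{c}(d_i-1)\ge\sum_{j=0}^{n}a_j$, which is exactly $\delta(d;a)\ge c$. Since we work throughout with pairs that are not linear cones, whenever a weight $a_j$ divides a degree $d_i$ the divisibility is strict, so $d_i\ge 2a_j$; this is the extra input beyond mere divisibility that rules out the degenerate pairs for which $\delta\ge c$ would otherwise fail. The plan is to run an induction that peels off one prime at a time, using the two recursive pairs $(d^p;a^p)$ and $(d(p);a(p))$ attached to a prime $p$ in Definition \ref{def:pair}, together with the behaviour of $\delta$ under this operation.

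First I would record the structural consequence of regularity I intend to use. Fix a prime $p$ dividing at least one weight, and set $I_p=\{j:p\mid a_j\}$ and $J_p=\{i:p\mid d_i\}$. Then $a_{I_p}=\gcd_{j\in I_p}a_j>1$, so regularity produces $|I_p|$ distinct degrees divisible by $a_{I_p}$, and in particular the pair $(d(p);a(p))$ of $p$-divisible degrees and weights is again regular by Lemma \ref{lemma:h-prop}. Because the ambient pair is not a linear cone, no degree equals a weight, so $(d(p);a(p))$ is not a linear cone either, and all its weights are $\ge 2$; Proposition \ref{prop:delta>0} then gives $\delta(d(p);a(p))>0$.

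The engine of the proof is the identity
$$\delta(d;a)=\delta(d^p;a^p)+\frac{p-1}{p}\,\delta(d(p);a(p)),$$
obtained by comparing term by term: dividing the $p$-divisible degrees and weights by $p$ lowers $\sum_i d_i$ by $\tfrac{p-1}{p}\sum_{i\in J_p}d_i$ and $\sum_j a_j$ by $\tfrac{p-1}{p}\sum_{j\in I_p}a_j$. By the previous paragraph the second summand is strictly positive, while $(d^p;a^p)$ is regular (again Lemma \ref{lemma:h-prop}) with strictly smaller total weight $\sum_j a_j^p$. Inducting on $\sum_j a_j$, the inductive hypothesis applied to $(d^p;a^p)$ gives $\delta(d^p;a^p)\ge c$ --- crucially $c$ is unchanged under this operation --- and adding the positive term yields $\delta(d;a)>c$, even better than required. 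The base case consists of pairs of minimal weight-sum, or those admitting no reduction preserving the hypotheses; these are handled directly from $d_i\ge 2a_j$ together with the fact that every weight divides some degree (via the singleton subsets $\{j\}$).

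The step I expect to be the \textbf{main obstacle} is guaranteeing that $(d^p;a^p)$ still satisfies the running hypotheses, so that the inductive hypothesis genuinely applies. Two things can go wrong: a weight equal to $p$ collapses to $1$, and a collision $a_j/p=d_i$ can turn $(d^p;a^p)$ into a linear cone even though $(d;a)$ is not. I would handle these by choosing the prime $p$ to avoid such collisions whenever possible, and, in the remaining case where a weight is itself a prime $p$, by arguing directly: the subset $\{j\}$ forces a degree that is a proper multiple of $p$, hence $\ge 2p$, and one then removes this weight--degree pair while tracking the count $c$ so that the target surplus $+c$ is preserved. Making this bookkeeping uniform --- ensuring the removed degree contributions always dominate the weight being deleted without destroying regularity --- is the technical heart of the argument.
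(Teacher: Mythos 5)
First, a point of orientation: the paper offers no proof of this lemma at all --- it is imported as a black box from \cite[Proposition 5.2]{tasin} --- so there is no in-house argument to compare yours against, and your proposal has to stand on its own. Your opening observation is a good and necessary one: as printed, the statement is false without the extra hypothesis $d_i\neq a_j$ (the pair $(2;2)$ is regular, has no weight equal to $1$, and satisfies $\delta=0<1=c$), so the non-linear-cone assumption you add, and the resulting inequality $d_i\geq 2a_j$ whenever $a_j\mid d_i$, are genuinely part of the correct statement. The identity $\delta(d;a)=\delta(d^p;a^p)+\tfrac{p-1}{p}\,\delta(d(p);a(p))$ is correct, and the strict positivity of the second term does follow from Lemma \ref{lemma:h-prop} together with Proposition \ref{prop:delta>0} as you say.

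The gap is exactly where you place it, and it is not closed. The induction on $\sum_j a_j$ needs $(d^p;a^p)$ to satisfy the running hypotheses, and neither of your escape routes is an argument yet. ``Choose $p$ to avoid such collisions whenever possible'' says nothing when no such $p$ exists, and for many regular pairs every admissible prime is bad: for $(6,10;2,3,5)$ all three weights are prime, so every choice of $p$ collapses a weight to $1$. Worse, your patch for that situation --- delete the weight $a_j=p$ together with a degree divisible by $p$ and induct with $c-1$ degrees --- can destroy regularity: in the same example with $p=2$, removing the weight $2$ together with $6$ leaves $(10;3,5)$, which is not regular since $3\nmid 10$, and removing it together with $10$ leaves $(6;3,5)$, which is not regular since $5\nmid 6$. (Reducing along $p=3$ or $p=5$ does work there, but nothing in your write-up selects a good prime or proves one always exists.) Since you yourself identify this bookkeeping as the technical heart of the argument and then do not carry it out, what you have is a plausible strategy rather than a proof; the case analysis that would make the induction close is precisely the content of \cite[Proposition 5.2]{tasin}.
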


We can now show that in the regular case of Conjecture \ref{conjecture:frob}, we can suppose that there is no non-trivial factor dividing all the degrees.

\begin{lemma}
\label{lemma:primered}

Let $(d;a)=(d_1,\ldots,d_c; a_0,\ldots,a_n)$ be a regular pair such that $c \leq n$. Let $g \coloneqq \gcd(d_1,\ldots,d_c) > 1$ and $p \mid g$  a prime dividing $g$. Suppose $p \mid a_0,\ldots,a_k$, $p \nmid a_{k+1},\ldots,a_n$. If 
$$\delta^p=\delta(d^p;a^p) \geq F(a^p)=F(\frac{a_0}{p},\ldots,\frac{a_k}{p},a_{k+1},\ldots,a_n),$$ then 
$$\delta(d;a) \geq F(a_0,\ldots,a_n)$$ also holds.
\end{lemma}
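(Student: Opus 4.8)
The plan is to isolate two independent ingredients and then combine them: an \emph{exact} identity for how the amplitude transforms under $(d;a)\mapsto(d^p;a^p)$, and a \emph{purely semigroup-theoretic} inequality comparing $F(a)$ with $F(a^p)$, with no degrees involved. First I would record the amplitude identity. Since $p\mid g\mid d_i$ for every $i$, passing to $(d^p;a^p)$ divides \emph{all} degrees by $p$ and exactly the weights $a_0,\dots,a_k$ by $p$, so $\delta^p=\tfrac1p\sum_i d_i-\tfrac1p\sum_{p\mid a_j}a_j-\sum_{p\nmid a_j}a_j$. Multiplying by $p$ and adding $(p-1)\sum_{p\nmid a_j}a_j$ collapses to $\sum_i d_i-\sum_j a_j$, giving
$$\delta(d;a)=p\,\delta^p+(p-1)\sum_{j:\,p\nmid a_j}a_j.$$

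The heart of the argument is the Frobenius inequality
$$F(a_0,\dots,a_n)\ \le\ p\,F(a^p)+(p-1)\sum_{j:\,p\nmid a_j}a_j.\qquad(\star)$$
To prove it, write $S=\langle a_0,\dots,a_n\rangle$ and $S'=\langle a^p\rangle$. One checks that $\gcd(a^p)=1$ (using $\gcd(a_0,\dots,a_n)=1$, which holds for a regular pair with $c\le n$), so $S'$ is a genuine numerical semigroup and every integer exceeding $F(a^p)$ lies in $S'$. Setting $B=\langle a_i/p:\,p\mid a_i\rangle$ and $C=\langle a_j:\,p\nmid a_j\rangle$, we have $S=pB+C$ and $S'=B+C$, and crucially $pS'=pB+pC\subseteq pB+C=S$ since $C$ is closed under addition. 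Now let $N$ exceed the right-hand side of $(\star)$. Choose any weight $a_{j_0}$ with $p\nmid a_{j_0}$ (one exists, else $\gcd(a)\ge p$); as $p$ is prime, $a_{j_0}$ is invertible mod $p$, so there is $m\in\{0,\dots,p-1\}$ with $\gamma:=m\,a_{j_0}\equiv N\pmod p$ and $\gamma\le(p-1)a_{j_0}\le(p-1)\sum_{p\nmid a_j}a_j$. Then $N-\gamma$ is a nonnegative multiple of $p$ with $(N-\gamma)/p>F(a^p)$, hence $(N-\gamma)/p\in S'$; therefore $N-\gamma\in pS'\subseteq S$, and since $\gamma\in C\subseteq S$ we get $N\in S$. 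This yields $(\star)$.

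Combining the two ingredients finishes the proof: using the hypothesis $\delta^p\ge F(a^p)$ and then $(\star)$,
$$\delta(d;a)=p\,\delta^p+(p-1)\!\!\sum_{j:\,p\nmid a_j}\!\!a_j\ \ge\ p\,F(a^p)+(p-1)\!\!\sum_{j:\,p\nmid a_j}\!\!a_j\ \ge\ F(a_0,\dots,a_n).$$
I expect the main obstacle to be $(\star)$, and specifically the case of \emph{several} non-$p$-divisible weights: when only one such weight is present and $\gcd$ of the rest equals $p$, $(\star)$ is an equality and is exactly Lemma \ref{lemma:degred}, so the work lies in controlling the residue correction $\gamma$ in general. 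The point is that adjusting $N$ to a multiple of $p$ costs at most $(p-1)a_{j_0}\le(p-1)\sum_{p\nmid a_j}a_j$, which is precisely the surplus built into the amplitude identity; keeping the correction within this budget (rather than, say, needing several generators of $C$) is what makes the two displays line up. A couple of degenerate situations — no weight divisible by $p$, or some $a_j=p$ forcing $F(a^p)=-1$ — should be checked separately, but in each the inequalities above hold trivially by the same computation.
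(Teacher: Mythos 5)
Your proof is correct, and it shares the paper's overall skeleton: the exact amplitude identity $\delta(d;a)=p\,\delta^p+(p-1)\sum_{p\nmid a_j}a_j$ followed by an upper bound on $F(a_0,\ldots,a_n)$ in terms of $F(a^p)$. Where you diverge is in how that Frobenius comparison is obtained. The paper does not prove your inequality $(\star)$; it spends only $(p-1)a_n$ of the surplus, invoking the classical reduction formula (Lemma~\ref{lemma:degred}) to get the exact identity $pF(a^p)+(p-1)a_n=F(a_0,\ldots,a_k,pa_{k+1},\ldots,pa_{n-1},a_n)$ and then Corollary~\ref{cor:multiples} to remove the factors of $p$. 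Your $(\star)$ is a weaker bound (it consumes the full budget $(p-1)\sum_{p\nmid a_j}a_j$), but it suffices because the amplitude identity supplies exactly that sum, and your proof of it --- writing $N=\gamma+(N-\gamma)$ with $\gamma=ma_{j_0}$ chosen to fix the residue mod $p$, then using $pS'\subseteq S$ --- is self-contained and avoids both cited lemmas. So the trade-off is: the paper's route gives the sharper intermediate estimate via known results, while yours is more elementary and makes the mechanism transparent. The points you flag as needing care ($\gcd(a^p)=1$, which indeed follows from regularity with $c\le n$; the degenerate cases $F(a^p)=-1$ and the existence of a weight $a_{j_0}$ with $p\nmid a_{j_0}$) are exactly the right ones, and your treatment of them is correct.
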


\begin{proof}
Note that by regularity, $k+1 \leq c \leq n$. Then, the statement follows from Corollary \ref{cor:multiples} and Lemma \ref{lemma:degred}:

\begin{equation*}
\begin{split}
\delta(d;a)& = p\delta^p+(p-1)\sum_{i=k+1}^n a_i \geq p\delta^p+(p-1)a_n \\ 
&\geq pF(\frac{a_0}{p},\ldots,\frac{a_{k}}{p},a_{k+1},\ldots,a_n)+(p-1)a_n \\
&=F(a_0,\ldots,a_{k-1},pa_k,\ldots,pa_{n-1},a_n)\geq F(a_0,\ldots,a_n).
\end{split}
\end{equation*}
\end{proof}

\begin{proposition}
    Suppose that Conjecture \ref{conjecture:frob} holds for any regular pair $(d^*;a^*)$ such that $\gcd(d_1^*,\ldots,d_c^*)=1$, then it also holds for any pair $(d;a)$ such that $\gcd(d_1,\ldots,d_c)>1$.
\end{proposition}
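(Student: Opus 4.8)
The plan is to prove the statement by induction on the number $t$ of prime factors of $g=\gcd(d_1,\ldots,d_c)$, counted with multiplicity, using Lemma \ref{lemma:primered} as the inductive engine and the assumed validity of Conjecture \ref{conjecture:frob} for $\gcd$-one pairs as the base case $t=0$. Fix a regular pair $(d;a)$ with $c\leq n$, all $a_i\neq 1$, and $g>1$, and choose a prime $p\mid g$. Since $p\nmid h=1$, Lemma \ref{lemma:h-prop} guarantees that $(d^p;a^p)$ is again regular; it retains the same number of degrees and weights, so $c\leq n$ is preserved, and since $p$ divides every degree, the $\gcd$ of the degrees of $(d^p;a^p)$ equals $g/p$, which has one fewer prime factor. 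By Lemma \ref{lemma:primered}, in order to conclude $\delta(d;a)\geq F(a)$ it suffices to establish $\delta^p\geq F(a^p)$, so the whole argument reduces to verifying this single inequality.

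First I would dispose of the generic case, in which every weight of $a^p$ is still $>1$. Then $(d^p;a^p)$ satisfies all the hypotheses of Conjecture \ref{conjecture:frob}: if $g/p=1$ the desired inequality $\delta^p\geq F(a^p)$ is precisely the base-case assumption, while if $g/p>1$ it follows from the inductive hypothesis (the reduced pair has $t-1$ prime factors). Either way Lemma \ref{lemma:primered} closes this case.

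The hard part is the degenerate case, where dividing by $p$ turns one or more weights into $1$; this happens exactly for the weights of $(d;a)$ equal to $p$, and it means $(d^p;a^p)$ no longer satisfies the hypotheses of Conjecture \ref{conjecture:frob}, so neither the base case nor the inductive hypothesis applies to it. To handle this I would argue directly. Let $s\geq 1$ be the number of weights of $(d;a)$ equal to $p$; these form a subset whose $\gcd$ equals $p>1$, so regularity of $(d;a)$ forces the existence of $s$ distinct degrees divisible by $p$, giving $s\leq c$. Moreover, since $a^p$ now contains the weight $1$, its semigroup is all of $\mathbb{N}$ and hence $F(a^p)=-1$. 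Deleting from $(d^p;a^p)$ the $s$ weights equal to $1$ leaves a regular pair with the same $c$ degrees and all weights $>1$ (removing units cannot affect any subset with nontrivial common factor, so regularity is unchanged), whence Lemma \ref{lemma:deltabound} bounds its amplitude below by $c$; reinstating the $s$ deleted units lowers the amplitude by exactly $s$, so $\delta^p\geq c-s\geq 0\geq -1=F(a^p)$. Thus $\delta^p\geq F(a^p)$ holds in this case as well, Lemma \ref{lemma:primered} again yields $\delta(d;a)\geq F(a)$, and the induction is complete.
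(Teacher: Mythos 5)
Your proof is correct and follows essentially the same route as the paper's: induction on the number of prime factors of $\gcd(d_1,\ldots,d_c)$ counted with multiplicity, reduction via Lemma \ref{lemma:primered} to the inequality $\delta^p\geq F(a^p)$, and, in the case where some weights of $a^p$ become $1$, the use of Lemma \ref{lemma:deltabound} on the subpair with units removed together with $F(a^p)=-1$. The only differences are cosmetic (you make explicit the regularity count $s\leq c$ and the fact that deleting units preserves regularity, both of which the paper uses implicitly).
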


\begin{proof}
    Let $g=\gcd(d_1,\ldots,d_c)>1$, and write $g = \prod p_i^{k_i}$ for prime numbers $p_i$ and integers $k_i>0$. We show the statement by induction on $k=\sum k_i$; the base case $k=0$ is given by hypothesis, so suppose that the statement holds for any pair $(d^{**};a^{**})$ such that $\gcd(d_1^{**},\ldots,d_c^{**})=\prod p_i^{k_i'}$ and $\sum k_i' \leq k-1$. 

Consider the pair $(d^p;a^p)=(d_1',\ldots,d_c';a_0',\ldots,a_n')$ where $p$ is a prime dividing $g$, obtained by dividing all divisible degrees and weights of $(d;a)$ by $p$; then, $g'=\gcd(d_1',\ldots,d_c')=g/p$. 
    \begin{itemize}
        \item{If $a_i' \neq 1$ for any $i$, $(d^p;a^p)$ satisfies the hypotheses of Conjecture \ref{conjecture:frob}, hence $$\delta^p=\delta(d^p;a^p) \geq F(a_0',\ldots,a_n')=F(a^p)$$ by the induction step. Then, $\delta(d;a) \geq F(a_0,\ldots,a_n)$ by Lemma \ref{lemma:primered}.}
        \item{If $ a_i'=1$ for some $i$, then we may assume that $a_0,\ldots,a_m=p$ and $a_{m+1},\ldots,a_n \neq p$, where $m \leq c-1$ by regularity. Now, the subpair $(d^p;a^{p,[m]})=(d_1',\ldots,d_c'; a_{m+1}',\ldots,a_n')$ (obtained by only considering the weights of $(d^p;a^p)$ which are not equal to $1$) satisfies the hypotheses of Lemma \ref{lemma:deltabound}, therefore $\delta(d^p;a^{p,[m]})\geq c$. Since $a_0'=\ldots=a_m'=1$ and $m\leq c-1$, this implies that $\delta(d^p;a^p)=\delta(d';a')\geq 0$; on the other hand, $F(a_0',\ldots,a_n')=-1$ because some weight is equal to 1, hence $\delta(d^p;a^p) > F(a_0',\ldots,a_n')$, and the statement follows from Lemma \ref{lemma:primered}.}
        \end{itemize}
\end{proof}

\subsection{Recursive bounds on Frobenius numbers}

We now move to proving a family of recursive bounds on Frobenius numbers, which will be useful in taking an inductive approach on the number of weights of pairs.

\begin{lemma}
\label{lemma:recfrob1g}
    Let $a_0,\ldots,a_n$ be coprime positive integers, $g=\gcd(a_0,\ldots,a_k)$, $G$ a positive integer coprime with $g$. Then,
    $$F(a_0,\ldots,a_n) \leq F^g(a_0,\ldots,a_k)+F^G(a_{k+1},\ldots,a_n,g)+gG.$$
\end{lemma}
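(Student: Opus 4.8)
The plan is to show directly that every integer $N$ with $N > F^g(a_0,\ldots,a_k) + F^G(a_{k+1},\ldots,a_n,g) + gG$ lies in the semigroup $S=\langle a_0,\ldots,a_n\rangle$; the bound on $F(a_0,\ldots,a_n)$ then follows immediately. Write $\Phi = F^g(a_0,\ldots,a_k)$ and $\Psi = F^G(a_{k+1},\ldots,a_n,g)$, and split the generators into the two groups $\{a_0,\ldots,a_k\}$ (all divisible by $g$, with $\gcd = g$) and $\{a_{k+1},\ldots,a_n\}$. Note that $\gcd(a_{k+1},\ldots,a_n,g)=\gcd(a_0,\ldots,a_n)=1$, so $\Psi$ is well defined, while $\Phi$ is read off via the paper's convention as $gF(a_0/g,\ldots,a_k/g)$. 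The idea is to realise $N = A + B$, where $A$ is a non-negative combination of $a_0,\ldots,a_k$ (hence automatically a multiple of $g$) and $B$ is a non-negative combination of $a_{k+1},\ldots,a_n$. Since $A\equiv 0 \pmod g$, this forces $B\equiv N \pmod g$, and conversely, once $B$ has the right residue and $A=N-B$ is a multiple of $g$ exceeding $\Phi$, the defining property of the $\tfrac1g$-Frobenius number guarantees $A\in\langle a_0,\ldots,a_k\rangle$. So the whole problem reduces to producing a suitably \emph{small} combination $B$ of $a_{k+1},\ldots,a_n$ with $B\equiv N\pmod g$.

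To build such a $B$ I would bring in $G$ and the second group. First I use $\gcd(g,G)=1$: by CRT the integers that are $\equiv 0 \pmod G$ and $\equiv N \pmod g$ form a single residue class modulo $\lcm(g,G)=gG$, so the half-open window $(\Psi,\Psi+gG]$ of length $gG$ contains exactly one such integer $C$. Then $C$ is a multiple of $G$ with $C>\Psi=F^G(a_{k+1},\ldots,a_n,g)$, so by definition of the $\tfrac1G$-Frobenius number $C\in\langle a_{k+1},\ldots,a_n,g\rangle$. Writing any representation $C=\sum_{i>k}x_i a_i + yg$ with $x_i,y\ge 0$ and setting $B=\sum_{i>k}x_i a_i=C-yg$, I get a non-negative combination of $a_{k+1},\ldots,a_n$ with $B\equiv C\equiv N\pmod g$ and, crucially, $B\le C\le \Psi+gG$.

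With $B$ in hand the conclusion is a short estimate. The integer $N-B$ is a multiple of $g$, and
\[
N-B \;\ge\; N-C \;\ge\; N-(\Psi+gG) \;>\; \big(\Phi+\Psi+gG\big)-(\Psi+gG) \;=\; \Phi .
\]
Thus $N-B$ is a multiple of $g$ strictly larger than $F^g(a_0,\ldots,a_k)$, hence $N-B\in\langle a_0,\ldots,a_k\rangle$, and $N=(N-B)+B\in S$, as required. (The degenerate cases, e.g.\ some $a_i=g$ giving $\Phi=-g$, are covered by the same chain together with the convention $F^g=-g$, since then $N-B>-g$ and a multiple of $g$ is $\ge 0$.)

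I expect the only delicate point to be the size control on $B$: everything hinges on squeezing a multiple of $G$ into the length-$gG$ window just above $\Psi$ in the correct residue class mod $g$ (which is exactly where coprimality of $g$ and $G$ is used and where the additive term $gG$ in the statement comes from), and on the fact that discarding the $g$-part of the representation can only \emph{decrease} $B$ so that $B\le C$. The remaining arguments are bookkeeping with the definitions of the two $\tfrac1h$-Frobenius numbers.
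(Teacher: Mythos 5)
Your proof is correct and follows essentially the same route as the paper's: both arguments locate a suitable multiple of $G$ exceeding $F^G(a_{k+1},\ldots,a_n,g)$ in the residue class of $N$ modulo $g$, represent it in $\langle a_{k+1},\ldots,a_n,g\rangle$, absorb the spare $g$-part into the first block, and finish with the defining property of $F^g(a_0,\ldots,a_k)$. The only cosmetic difference is that you produce that multiple by a direct CRT argument in a window of length $gG$, whereas the paper invokes $F(g,G)=gG-g-G$ and writes $N-F^g(a_0,\ldots,a_k)-F^G(a_{k+1},\ldots,a_n,g)-g-G$ as $y_1g+y_2G$ --- two interchangeable phrasings of the same fact.
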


\begin{proof}
First of all, note that under the assumptions, $\gcd(g,a_{k+1},\ldots,a_n)=1$, so that $F^G(a_{k+1},\ldots,a_n,g)$ is well defined. Let $N > F(a_0,\ldots,a_k) + F^G(a_{k+1},\ldots,a_n,g)+gG$, then
$$N - F(a_0,\ldots,a_k) - F^G(a_{k+1},\ldots,a_n,g)-g-G > gG-g-G.$$
Since $F(g,G)=gG-g-G$, by definition we get that there exist $y_1, y_2 \geq 0$ such that
$$N-F(a_0\ldots,a_k)-F^G(a_{k+1},\ldots,a_n,g)-g-G = y_1g+y_2G,$$
and reordering the terms,
$$N - (F(a_0,\ldots,a_k)+(y_1+1)g) = F^G(a_{k+1},\ldots,a_n,g)+(y_2+1)G.$$
Again by definition, since 
$$F^G(a_{k+1},\ldots,a_n,g)+(y_2+1)G=\sum_{i=k+1}^n x_ia_i + yg$$
with $x_i, y \geq 0$, we get
$$N=F(a_0,\ldots,a_k)+(y+y_1+1)g+\sum_{i=k+1}^n x_ia_i,$$
hence by definition,
$$N = \sum_{j=0}^k x_j a_j +  \sum_{i=k+1}^n x_i a_i$$
for some $x_j \geq 0$.
\end{proof}

\begin{lemma}
\label{lemma:recfrob1}
    Let $a_0,\ldots,a_n$ be coprime positive integers, $g=\gcd(a_0,\ldots,a_k)$ for $k \geq 0$. Then,
    $$F(a_0,\ldots,a_n) \leq F^g(a_0,\ldots,a_k)+F(a_{k+1},\ldots,a_n,g)+g.$$
\end{lemma}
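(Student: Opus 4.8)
The plan is to obtain this statement directly as the specialization $G=1$ of the preceding Lemma \ref{lemma:recfrob1g}. First I would check that the hypotheses of that lemma are satisfied with this choice: the integers $a_0,\ldots,a_n$ are coprime and $g=\gcd(a_0,\ldots,a_k)$ by assumption, while $G=1$ is trivially coprime with $g$, so no additional condition needs to be imposed.

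Next I would unwind the two quantities that this choice affects in the conclusion of Lemma \ref{lemma:recfrob1g}. By the definition of the $\frac{1}{h}$-Frobenius number, taking $h=1$ gives $1\mathbb{Z}=\mathbb{Z}$, hence $F^1(S)=\max(\mathbb{Z}\setminus S)=F(S)$ for any numerical semigroup $S$; in particular $F^G(a_{k+1},\ldots,a_n,g)=F(a_{k+1},\ldots,a_n,g)$. Its well-definedness is guaranteed exactly as in the proof of Lemma \ref{lemma:recfrob1g}: the coprimality of $a_0,\ldots,a_n$ together with $g=\gcd(a_0,\ldots,a_k)$ forces $\gcd(g,a_{k+1},\ldots,a_n)=1$. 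Moreover $gG=g$. Substituting both into the inequality of Lemma \ref{lemma:recfrob1g} yields
$$F(a_0,\ldots,a_n)\leq F^g(a_0,\ldots,a_k)+F(a_{k+1},\ldots,a_n,g)+g,$$
which is precisely the desired bound.

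Because the argument is a pure specialization, I do not expect any genuine obstacle. The only point requiring a moment's care is the bookkeeping at $G=1$: one should confirm that the classical two-variable identity $F(g,G)=gG-g-G$, invoked in the proof of Lemma \ref{lemma:recfrob1g}, remains valid here (where it correctly reduces to $F(g,1)=-1$, consistent with $1$ generating all of $\mathbb{N}$), so that no hypothesis of the base lemma is secretly being violated. Once this is noted, the proof reduces to a one-line appeal to Lemma \ref{lemma:recfrob1g}.
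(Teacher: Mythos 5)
Your proposal is correct and is exactly the paper's own proof: the paper derives Lemma \ref{lemma:recfrob1} by setting $G=1$ in Lemma \ref{lemma:recfrob1g}, and your careful unwinding of $F^1=F$ and $gG=g$ just makes that specialization explicit.
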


\begin{proof}
This is Lemma \ref{lemma:recfrob1g} when $G=1$.

\end{proof}

\begin{lemma}
\label{lemma:recfrob2g}
Let $a_0,\ldots,a_n$ be coprime positive integers, and consider (not necessarily disjoint) non-empty subsets $I_1,\ldots, I_k \subset \{0,\ldots,n\}$. Let $g_j=\gcd_{i \in I_j}(a_i)$ and write $a_{I_j}$ for the set of weights indexed by $I_j$. Suppose the $g_j$ are coprime. Then,
$$F(a_0,\ldots,a_n) \leq F(g_1,\ldots,g_k) + \sum_{j=1}^k g_j +\sum_{j=1}^k F^{g_j}(a_{I_j}).$$
\end{lemma}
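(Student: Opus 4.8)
The plan is to follow the template of the proof of Lemma \ref{lemma:recfrob1g}, replacing the two-term split by a $k$-term split governed by the Frobenius number of the $g_j$. Write $F_0 = F(g_1,\ldots,g_k)$ and $F_j = F^{g_j}(a_{I_j})$ for $j = 1,\ldots,k$. Since $\gcd(a_{I_j}) = g_j$, the conventions for the $\frac{1}{h}$-Frobenius number give $F^{g_j}(a_{I_j}) = g_j F(a_{I_j}/g_j)$, so each $F_j$ is a multiple of $g_j$; by definition it is the largest multiple of $g_j$ not lying in the semigroup $S_j = \langle a_{I_j}\rangle$. The hypothesis that the $g_j$ are coprime guarantees that $F(g_1,\ldots,g_k)$ is well-defined.

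First I would take any integer $N > F_0 + \sum_{j=1}^k g_j + \sum_{j=1}^k F_j$ and aim to show that $N$ is a non-negative integral combination of $a_0,\ldots,a_n$; establishing this for all such $N$ immediately yields the bound on $F(a_0,\ldots,a_n)$. Subtracting the appropriate constants gives $N - \sum_{j=1}^k (F_j + g_j) > F(g_1,\ldots,g_k)$, so by definition of the Frobenius number there exist $c_1,\ldots,c_k \geq 0$ with $N - \sum_{j=1}^k (F_j + g_j) = \sum_{j=1}^k c_j g_j$. Rearranging yields $N = \sum_{j=1}^k \bigl(F_j + (c_j+1)g_j\bigr)$.

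The key step is then to upgrade each summand. For each $j$, the quantity $F_j + (c_j+1)g_j$ is a multiple of $g_j$ strictly larger than $F_j = F^{g_j}(a_{I_j})$; since $F_j$ is the largest multiple of $g_j$ outside $S_j$, this quantity lies in $S_j$, i.e. $F_j + (c_j+1)g_j = \sum_{i \in I_j} x_i^{(j)} a_i$ for some $x_i^{(j)} \geq 0$. Summing over $j$ expresses $N$ as a non-negative combination of the weights indexed by $\bigcup_j I_j \subseteq \{0,\ldots,n\}$, hence as a non-negative combination of $a_0,\ldots,a_n$ (taking zero coefficients for the remaining weights), as required.

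I expect the only delicate point to be the bookkeeping around the conventions for $F^{g_j}(a_{I_j})$ when $a_{I_j}$ is not coprime: one must confirm both that $F_j$ is a genuine multiple of $g_j$ and that "a multiple of $g_j$ exceeding $F_j$'' correctly translates into membership in $S_j$. Both follow from the definition $F^{g_j}(S_j) = \max\bigl(g_j\mathbb{Z} \setminus (g_j\mathbb{Z} \cap S_j)\bigr)$ together with the observation that every element of $S_j$ is already a multiple of $g_j$. Notably, no part of the argument requires the $I_j$ to be disjoint, which is exactly why the statement permits overlapping subsets.
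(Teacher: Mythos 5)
Your proposal is correct and follows essentially the same argument as the paper: subtract $\sum_j(F^{g_j}(a_{I_j})+g_j)$ from $N$, apply the definition of $F(g_1,\ldots,g_k)$ to write the difference as $\sum_j c_j g_j$, and absorb each $F^{g_j}(a_{I_j})+(c_j+1)g_j$ into the semigroup $\langle a_{I_j}\rangle$. Your extra care with the convention for $F^{g_j}$ when $\gcd(a_{I_j})=g_j>1$ is a welcome clarification but does not change the route.
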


\begin{proof}
Let
$$N>F(g_1,\ldots,g_k) + \sum_{j=1}^k g_j +\sum_{j=1}^k F^{g_j}(a_{I_j}),$$
then,
$$N-\sum_{j=1}^k g_j -\sum_{j=1}^k F^{g_j}(a_{I_j})>F(g_1,\ldots,g_k)$$
and by definition,
$$N-\sum_{j=1}^k g_j -\sum_{j=1}^k F^{g_j}(a_{I_j})=\sum_{i=1}^k y_ig_i.$$
We can rewrite this as
$$N= \sum_{j=1}^k (F^{g_j}(a_{I_j})+(y_j+1)g_j).$$
By definition for each Frobenius number, we get
$$N= \sum_{j=1}^k \sum_{l \in I_j} x_l a_l,$$
for $x_l \geq 0$.
\end{proof}

\subsection{Reduction to the regular case}
\label{section:reduction}
While at first glance Conjectures \ref{conjecture:frob} and \ref{conjecture:h-frob} are not equally strong statements, it turns out that in most cases the $h$-regular conjecture can be reduced to the regular case. In the following, we make heavy use of the notation introduced in Section \ref{section:pairs} to streamline the statements in the results and proofs.

\begin{lemma}
\label{lemma:reduction}
Suppose Conjecture \ref{conjecture:h-frob} holds for $h'$-regular pairs of codimension at most $c$, such that $h' < h$. Then, the conjecture also holds for any $h$-regular pair $(d;a)$ such that $|d|=c$, satisfying at least one of the following conditions.
\begin{enumerate}[label=(\roman*)]
\item{\label{lemma:deltabarp<0}
There is a prime $p$ dividing $h$ such that $\bar \delta(p) \leq 0$, where $\bar \delta(p)=\delta(d;a)-\delta(p)$.}

\item{\label{lemma:deltabarp>0}
There is a prime $p$ dividing $h$ such that $\bar \delta(p) \geq 0$ and $|d(p)| < |a(p)|$.}

\item{\label{lemma:deltabarg<0}
There is a greatest common divisor $g = \gcd(a_{i_1},\ldots,a_{i_k})>1$ such that $\delta(d;a)\geq \delta(d(g);a(g))$ and $|d(g)|<|a(g)|$.}

\end{enumerate}
\end{lemma}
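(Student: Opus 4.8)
The plan is to prove the inequality $\delta(d;a) \geq F^h(a_0,\ldots,a_n)$ by reducing it to the same inequality for a sub-pair of strictly smaller regularity index, to which the induction hypothesis applies. Two elementary facts drive every case. First, if all weights of a family are divisible by an integer $m$ with $m \mid h$, then factoring out $m$ gives the scaling $F^h(\cdot) = m\,F^{h/m}(\cdot/m)$ and divides the amplitude by $m$. Second, passing to a sub-multiset of the weights enlarges the generated semigroup, so $F^h$ cannot increase. I would also record at the outset that dividing a weight preserves the weight hypothesis of Conjecture \ref{conjecture:h-frob}: for $m \mid a_i$ one has $a_i/m \mid h/m \iff a_i \mid h$, so $a_i \nmid h$ forces $a_i/m \nmid h/m$ (in particular $a_i/m \neq 1$). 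Throughout I write $\delta(p)=\delta(d(p);a(p))$ and $\delta^p=\delta(d^p;a^p)$.

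For cases \ref{lemma:deltabarp>0} and \ref{lemma:deltabarg<0} I would run the single chain
\[
\delta(d;a) \;\geq\; \delta(d(g);a(g)) \;\geq\; F^h(a(g)) \;\geq\; F^h(a_0,\ldots,a_n),
\]
taking $g=p$ in case \ref{lemma:deltabarp>0}. The outer inequalities are immediate: the first is the hypothesis (either $\bar\delta(p)\geq 0$, i.e. $\delta(d;a)\geq\delta(p)$, or $\delta(d;a)\geq\delta(d(g);a(g))$ directly), and the third holds because $a(g)$ is a sub-multiset of the full weight vector. For the middle inequality I apply the induction hypothesis to the divided sub-pair $(d(g)/g;a(g)/g)$ and scale by $g$, using $\delta(d(g);a(g))=g\,\delta(d(g)/g;a(g)/g)$ and $F^h(a(g))=g\,F^{h/g}(a(g)/g)$. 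For this the divided pair must have regularity index $<h$. In case \ref{lemma:deltabarp>0} this is automatic, since $p\mid h$ makes $(d(p)/p;a(p)/p)$ an $h/p$-regular pair by Lemma \ref{lemma:h-prop}. In case \ref{lemma:deltabarg<0} I first observe that $g\mid h$: clause (i) of Definition \ref{def:pair} applied to $I_g$ (where $a_{I_g}=\gcd(a(g))=g$) would require $|d(g)|\geq|a(g)|$ distinct degrees divisible by $g$, contradicting the hypothesis $|d(g)|<|a(g)|$, so clause (ii) must hold; then $(d(g)/g;a(g)/g)$ is $h/g$-regular by Corollary \ref{cor:h-prop}. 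In both cases the codimension bound $|d(g)|<|a(g)|$ together with the preserved weight hypothesis give exactly the hypotheses of Conjecture \ref{conjecture:h-frob} for the divided pair.

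Case \ref{lemma:deltabarp<0} is the one I expect to be the main obstacle, because the hypothesis $\bar\delta(p)\leq 0$ reverses the amplitude inequality ($\delta(d;a)\leq\delta(p)$), so the divisible sub-pair cannot be used directly. Instead I would reduce to the fully divided pair $(d^p;a^p)$, which by Lemma \ref{lemma:h-prop} is $h/p$-regular, retains the codimension $c\leq n$, and has all weights avoiding $h/p$; the induction hypothesis then gives $\delta^p\geq F^{h/p}(a^p)$. The transfer back rests on two computations. The amplitude identity $\delta^p=\tfrac1p\delta(p)+\bar\delta(p)$ yields $p\,\delta^p=\delta(p)+p\,\bar\delta(p)\leq \delta(p)+\bar\delta(p)=\delta(d;a)$, the inequality being precisely where $\bar\delta(p)\leq 0$ is used. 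The second is the Frobenius comparison $F^h(a_0,\ldots,a_n)\leq p\,F^{h/p}(a^p)$, which I would prove directly: if $N$ is a multiple of $h$ with $N>p\,F^{h/p}(a^p)$, then $N/p$ is a multiple of $h/p$ exceeding $F^{h/p}(a^p)$, hence representable by $a^p$; multiplying such a representation by $p$ turns each divided weight back into the original weight and multiplies the coefficients of the undivided weights by $p$, exhibiting $N$ as a non-negative combination of $a_0,\ldots,a_n$. Chaining $F^h(a)\leq p\,F^{h/p}(a^p)\leq p\,\delta^p\leq\delta(d;a)$ closes this case.

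The genuinely delicate points, and where I would concentrate the verification, are thus: the combinatorial step in case \ref{lemma:deltabarg<0} where $h$-regularity is invoked through the count $|d(g)|<|a(g)|$ to force $g\mid h$ and hence an honest drop of the index; and keeping the bookkeeping of $\tfrac1h$-Frobenius numbers consistent under the abuse-of-notation for non-coprime sub-families, in particular checking at each reduction that the weight condition and the codimension bound of Conjecture \ref{conjecture:h-frob} survive. Everything else reduces to the two monotonicity facts above together with the amplitude identities $\delta(d;a)=\delta(p)+\bar\delta(p)$ and $\delta^p=\tfrac1p\delta(p)+\bar\delta(p)$.
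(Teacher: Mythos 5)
Your proof is correct and follows essentially the same route as the paper: reduce to the divided pair $(d^p;a^p)$ in case \ref{lemma:deltabarp<0} and to the sub-pairs $(d(p)/p;a(p)/p)$, $(d(g)/g;a(g)/g)$ in cases \ref{lemma:deltabarp>0} and \ref{lemma:deltabarg<0}, apply the induction hypothesis on the regularity index, and transfer back via the amplitude identities and the monotonicity/scaling of $\tfrac{1}{h}$-Frobenius numbers. Your explicit verification in case \ref{lemma:deltabarg<0} that $|d(g)|<|a(g)|$ forces $g\mid h$ (so that the regularity index genuinely drops) is a detail the paper leaves implicit behind its citation of Corollary \ref{cor:h-prop}, and is a worthwhile addition.
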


\begin{proof}
\hfill
\begin{enumerate}[label=(\roman*)]
\item{First, note that $\delta(d;a)=p\delta^p - (p-1)\bar \delta(p)$; since $\bar \delta(p)\leq 0$, then $$\delta(d;a)\geq p\delta^p.$$ By Lemma \ref{lemma:h-prop}, $(d^p;a^p)$ is $h/p$-regular; $(d^p;a^p)$ still satisfies the conditions of Conjecture \ref{conjecture:h-frob}, hence by hypothesis $p\delta^p \geq pF^{h/p}(a'_0,\ldots,a'_n)$, where $a'_i$ are the weights of $(d^p;a^p)$. But $$pF^{h/p}(a'_0,\ldots,a'_n) = F^h(pa'_0,\ldots,pa'_n) \geq F^h(a_0,\ldots,a_n)$$ by construction, hence
$$\delta(d;a) \geq p\delta^p \geq pF^{h/p}(a_0',\ldots,a_n')\geq F^h(a_0,\ldots,a_n).$$}

\item{
By Lemma \ref{lemma:h-prop}, the pair $(d(p)/p;a(p)/p)$ is $h/p$-regular, and by hypothesis $|a(p)| > |d(p)|$, so it satisfies the assumptions of Conjecture \ref{conjecture:h-frob}. Then, $\delta(p)/p \geq F^{h/p}(a(p)/p)$, which implies that 
$$\delta(d;a) \geq \delta(p) \geq pF^{h/p}(a(p)/p) = F^h(a(p))\geq F^h(a_0,\ldots,a_n).$$}

\item{The proof follows as in the previous case: by Lemma \ref{lemma:h-prop} and Corollary \ref{cor:h-prop}, $(d(g);a(g))$ is $h$-regular and $(d(g)/g; a(g)/g)$ is $h/g$-regular. Then, as before, $\delta(d;a) \geq \delta(g) \geq F^h(a(g))$.}
\end{enumerate}
\end{proof}

It follows that whenever a $h$-regular pair $(d;a)$ satisfies any of the conditions of Lemma \ref{lemma:reduction}, Conjecture \ref{conjecture:h-frob} holds for $(d;a)$ if it is true for any $h'$-regular pair with $h'<h$ and codimension at most $|d|$. If any $h$-regular pair always satisfied one of the conditions Lemma \ref{lemma:reduction}, it would mean that Conjectures \ref{conjecture:frob} and \ref{conjecture:h-frob} are equivalent. We are then led to investigate whether there exists a $h'$-regular pair $(d';a')$ which does not satisfy any of the conditions of Lemma \ref{lemma:reduction}, that is, if there exists a $h$'-regular pair $(d';a')$ satisfying the following property.
\begin{property*}[$*$]
For any integer $k$, let $d'(k)$ (resp. $a'(k)$) be the subset of degrees (resp. weights) of $(d';a')$ divisible by $k$. Then both of the following hold.
\begin{itemize}
\item{For every $k=p$ a prime dividing $h$, $\bar \delta'(p) > 0$ and $|d'(p)| \geq |a'(p)|$.}
\item{For every $k=\gcd(a_{i_1},\ldots,a_{i_l})>1$ such that $|d'(k)|<|a'(k)|$, $\delta(d';a')<\delta(d'(k);a'(k))$.}
\end{itemize}
\end{property*}

Although it is not clear whether a pair satisfying Property $(*)$ exists, in low codimensions we can show that there is no such pair.

\begin{proposition}
\label{prop:noasterisk}
Let $(d;a)$ be a $h$-regular pair, $|d|\in\{1,2,3\}$, satisfying the hypotheses of Conjecture \ref{conjecture:h-frob}. Then, $(d;a)$ does not satisfy Property $(*)$.
\end{proposition}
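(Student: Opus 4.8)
The plan is to assume that $(d;a)$ satisfies Property $(*)$ and derive a contradiction, working throughout with the \emph{minimal} regularity index $h$, so that in particular $h>1$ (the regular case $h=1$ makes both clauses of $(*)$ vacuous and is the base of the induction that $(*)$ is designed to feed into Lemma \ref{lemma:reduction}). With $h$ minimal, every prime $p\mid h$ must divide at least one weight: if some $p\mid h$ divided no weight, then every $a_I$ arising as a gcd of weights would be coprime to $p$, so any instance of condition (ii) in Definition \ref{def:pair}, namely $a_I\mid h$, would already give $a_I\mid h/p$; hence the pair would be $(h/p)$-regular, contradicting minimality. So from now on $|a(p)|\geq 1$ for every prime $p\mid h$. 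The first genuine reduction is to the coprime case: if $g_0\coloneqq\gcd(a_0,\dots,a_n)>1$, then applying $h$-regularity to $I=\bar n$ gives either $n+1$ distinct degrees divisible by $g_0$ (impossible, as there are only $c\leq n$ degrees) or $g_0\mid h$; the latter yields a prime $p\mid h$ dividing every weight, so $|a(p)|=n+1>c\geq|d(p)|$, already violating the first clause of $(*)$. Thus I may assume $\gcd(a_0,\dots,a_n)=1$.

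The key structural input is a counting consequence of regularity. For any $g>1$ that is a gcd of some weights with $g\nmid h$, applying $h$-regularity to $I_g$ (whose gcd $a_{I_g}$ is a multiple of $g$, hence $>1$ and, since $g\nmid h$, not a divisor of $h$) produces $|I_g|$ distinct degrees divisible by $a_{I_g}$, and these all lie in $J_g$; therefore $|d(g)|\geq|a(g)|$. Two consequences follow: the only $g$ that can trigger the second clause of $(*)$ are \emph{divisors of $h$}, and every prime $p$ dividing a weight satisfies $|d(p)|\geq|a(p)|$ (for $p\mid h$ this is the first clause, for $p\nmid h$ it is automatic). Applying the same argument to each singleton $\{i\}$ (using $a_i>1$ and $a_i\nmid h$) shows, in any codimension, that \emph{every weight divides some degree}.

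The main mechanism is to exploit the amplitude part of the first clause, $\bar\delta(p)=\sum_{j\notin J_p}d_j-\sum_{i\notin I_p}a_i>0$ (recall $\delta(d;a)>0$ by Proposition \ref{prop:delta>0}, so this is the general-type regime). Writing $t=|d(p)|$, the number of degrees not divisible by $p$ is $c-t$, whereas the number of weights not divisible by $p$ is $(n+1)-|a(p)|\geq(c+1)-t$, strictly larger; so the first clause forces fewer but larger "leftover" degrees. When $t=c$ (all degrees divisible by $p$) this gives $\bar\delta(p)\leq-\sum_{i\notin I_p}a_i<0$, using that not all weights are divisible by $p$ by coprimality — a contradiction. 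Hence no prime of $h$ divides all the degrees. For $c=1$ this already closes the argument, since $|d(p)|\geq|a(p)|\geq1$ forces $t=c=1$. For $c=2$ it forces $|d(p)|=|a(p)|=1$ for every $p\mid h$, and one then combines $\bar\delta(p)>0$ with the facts that each weight divides a degree and that weights sharing a common factor (with gcd not dividing $h$) are matched by \emph{both} degrees via regularity on two-element subsets, invoking the second clause with a composite $g\mid h$ in the residual configurations where the first clause alone does not suffice.

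The genuinely hard part is $c=3$: a prime of $h$ may now divide up to two degrees and two weights, several primes of $h$ may be distributed across the three degrees in many patterns, and closing the argument requires simultaneously tracking the sub-pair amplitudes $\delta(p)$ and $\delta(g)$, the number of weights forced to divide each degree, and the distinct-degree matchings imposed by Definition \ref{def:pair} (with the sub-pairs kept regular by Corollary \ref{cor:h-prop}). The target in each configuration is to produce either a prime with $\bar\delta(p)\leq0$ or a divisor $g\mid h$ with $|d(g)|<|a(g)|$ and $\delta(d;a)\geq\delta(d(g);a(g))$, contradicting the second clause. I expect essentially all of the effort to lie in this finite but intricate bookkeeping for $c=3$, where the interplay between the amplitude inequalities and the regularity matchings must be handled by hand across the possible divisibility patterns of the three degrees.
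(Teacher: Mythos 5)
Your setup is sound and in places more careful than the paper's own: fixing the \emph{minimal} regularity index $h>1$, reducing to $\gcd(a_0,\dots,a_n)=1$, the observation that $g\nmid h$ forces $|d(g)|\geq |a(g)|$ (so only divisors of $h$ can trigger the second clause of $(*)$), and the conclusion that no prime of $h$ can divide all the degrees are all correct, and together they do dispose of $c=1$. The problem is that the proposal stops being a proof exactly where the proposition has content. For $c=2$ you reduce to the configuration $|d(p)|=|a(p)|=1$ and then only gesture at ``residual configurations where the first clause alone does not suffice''; no argument is supplied. In fact that configuration is empty, but for a reason you did not extract: minimality of $h$ together with $a_i\nmid h$ forces $|a(p)|\geq 2$ for \emph{every} prime $p\mid h$, since the subset $I$ witnessing the failure of $h/p$-regularity must satisfy $a_I\mid h$ and $a_I\nmid h/p$, hence $p\mid a_I$, and a singleton $I$ would give $a_i\mid h$. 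With $|a(p)|\geq 2$ the first clause of $(*)$ gives $|d(p)|\geq 2=c$, contradicting your own ``no prime divides all degrees'' step. As written, your $c=2$ case is open.

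For $c=3$ you give only a description of what a proof would have to do (``finite but intricate bookkeeping \dots handled by hand'') with no actual case analysis; this is precisely the part of Proposition \ref{prop:noasterisk} that requires an idea, and the paper's resolution is short and specific rather than a long enumeration. Assuming $(*)$, one reduces by counting (much as in your $c\leq 2$ discussion) to a gcd $g=\gcd(a_0,a_1)>1$ with $|d(g)|=1$, $|a(g)|=2$ and $a(p)=a(g)$ for every prime $p\mid g$; regularity then splits $g=g_1g_2$ with $g\mid d_1$, $g_1\mid d_2$, $g_2\mid d_3$, and the two consequences of $(*)$, namely $\delta-\delta(g)<0$ (i.e.\ $d_2+d_3<\sum_{i\geq 2}a_i$) and $\bar\delta(p)>0$ for $p\mid g_1$ (i.e.\ $d_3>\sum_{i\geq 2}a_i$), are flatly incompatible. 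That pair of contradictory amplitude inequalities, obtained by playing the prime clause of $(*)$ against the gcd clause on the \emph{same} pair of weights, is the missing idea; without it your outline does not establish the $c=3$ case, and hence not the proposition.
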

\begin{proof}

When $|d|=1,2$ the statement is straightforward. In fact, if $|d|=1$  Lemma \ref{lemma:reduction}\ref{lemma:deltabarp<0} is automatically satisfied for any $p \mid h$; when $|d|=2$, either \ref{lemma:deltabarp<0} or \ref{lemma:deltabarp>0} is guaranteed to hold, because for any $p \mid h$ either $|d(p)|=1 < |a(p)|$ or $|d(p)|=2$ (hence $\bar \delta(p) \geq 0$). So we consider the case $|d|=3$.

Without loss of generality, let $g=\gcd(a_0,\ldots, a_k)>1$ be such that $|d(g)|<|a(g)|$. Note that we can suppose $|d(g)|=1$ and $|a(g)|=2$:
\begin{itemize}
\item{$|d(g)|=0$: this cannot happen, as no $a_i$ divides $h$;}
\item{$|d(g)|=2$: since $|d(g)|<|a(g)|$, then $3\leq |a(g)| \leq |a(p)|$ for any $p \mid g$. Then, either case \ref{lemma:deltabarp<0} or \ref{lemma:deltabarp>0} holds (if $|d(p)|=3$ or $|d(p)|=2$, respectively).}
\item{$|d(g)|=3$: then $|d(p)|=3$, and we have case \ref{lemma:deltabarp<0}.}
\item{$|d(g)|=1,|a(g)|=3$: $|a(p)|\geq 3$ for all $p \mid g$, hence either case \ref{lemma:deltabarp<0} or \ref{lemma:deltabarp>0} holds.}
\end{itemize}
Then suppose $|d(g)|=1, |a(g)|=2$; we can also assume that for any $p \mid g$, $a(g)=a(p)$, otherwise $|a(p)|\geq 3\geq |d(p)|$ and we are again in case \ref{lemma:deltabarp<0} or \ref{lemma:deltabarp>0}. Without loss of generality, under these assumptions we have $g \mid a_0,a_1$ and

\begin{equation*}
\begin{cases}
d_1= g \cdot k \\
d_2 = g_1 \cdot k_1 \\
d_3 = g_2 \cdot k_2 \\
\end{cases}
\end{equation*}
where $g= g_1 g_2$ for $g_1,g_2 >1$, $\gcd(g_1,g_2)=1$, and $k,k_1,k_2 \geq 1$.
If Property $(*)$ holds, we know that $\delta - \delta(g)<0$, which means that $d_2+d_3-\sum_{i=2}^n a_i <0$; on the other hand, since $\delta - \delta(p)>0$ for any $p \mid g_1$, we get that $d_3-\sum_{i=2}^n a_i > 0$, which is a contradiction.

Thus, Property $(*)$ cannot hold.
\end{proof}

\begin{corollary} 
\label{cor:noasterisk}
Conjectures \ref{conjecture:frob} and \ref{conjecture:h-frob} are equivalent for $h$-regular pairs of codimension at most 3.
\end{corollary}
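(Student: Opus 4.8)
The plan is to note that, with Proposition~\ref{prop:noasterisk} and Lemma~\ref{lemma:reduction} available, the equivalence is essentially formal, the only real content being an induction on the regularity index. One implication is immediate: Conjecture~\ref{conjecture:h-frob} specialises to Conjecture~\ref{conjecture:frob} by setting $h=1$, because a regular pair is precisely a $1$-regular pair, $F^1(a_0,\ldots,a_n)=F(a_0,\ldots,a_n)$, and the hypothesis $a_i\neq 1$ is the same as $a_i\nmid 1$. So I would focus on the reverse implication: assuming Conjecture~\ref{conjecture:frob}, prove Conjecture~\ref{conjecture:h-frob} for every $h$-regular pair of codimension at most $3$.

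For this I would use strong induction on $h$. The base case $h=1$ is exactly the assumed Conjecture~\ref{conjecture:frob}. For the inductive step, fix $h>1$ and assume the conclusion of Conjecture~\ref{conjecture:h-frob} for every $h'$-regular pair of codimension at most $3$ with $h'<h$; this is precisely the standing hypothesis of Lemma~\ref{lemma:reduction}. The observation I would make explicit is that conditions \ref{lemma:deltabarp<0}, \ref{lemma:deltabarp>0}, \ref{lemma:deltabarg<0} of that lemma are exactly the negation of Property~$(*)$. Indeed, failure of the first bullet of $(*)$ produces a prime $p\mid h$ with $\bar\delta(p)\leq 0$ (condition~\ref{lemma:deltabarp<0}) or, if $\bar\delta(p)>0$, with $|d(p)|<|a(p)|$ (condition~\ref{lemma:deltabarp>0}, using $\bar\delta(p)>0\Rightarrow\bar\delta(p)\geq 0$); and failure of the second bullet produces a $\gcd$ $g>1$ with $|d(g)|<|a(g)|$ and $\delta(d;a)\geq\delta(d(g);a(g))$, which is condition~\ref{lemma:deltabarg<0}.

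Given an $h$-regular pair $(d;a)$ with $|d|\leq 3$ satisfying the hypotheses of Conjecture~\ref{conjecture:h-frob}, Proposition~\ref{prop:noasterisk} tells us it does not satisfy Property~$(*)$, hence it satisfies at least one of \ref{lemma:deltabarp<0}--\ref{lemma:deltabarg<0}. Applying Lemma~\ref{lemma:reduction} together with the inductive hypothesis then yields $\delta(d;a)\geq F^h(a_0,\ldots,a_n)$, which closes the induction. Combined with the trivial direction above, this establishes the equivalence of the two conjectures in codimension at most $3$.

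I do not anticipate a genuine obstacle, as the substantive work is already contained in Proposition~\ref{prop:noasterisk} (the case analysis over $|d|\in\{1,2,3\}$) and in Lemma~\ref{lemma:reduction} (the three reduction arguments). The single point needing care is the bookkeeping of the induction: since the regularity index is not unique, I would frame the inductive statement as quantified over all pairs together with all admissible indices $h$, and check that the reduced pairs supplied by Lemma~\ref{lemma:reduction} are $h/p$- or $h/g$-regular with $p,g>1$, so that their index is strictly smaller than $h$ and the induction is well-founded.
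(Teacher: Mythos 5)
Your proof is correct and is exactly the argument the paper intends: the corollary is stated there without a separate proof, as an immediate consequence of Proposition~\ref{prop:noasterisk} and Lemma~\ref{lemma:reduction} via precisely the induction on the regularity index that you spell out. Your closing remark about well-foundedness is the right point to check, and it does hold: in condition~\ref{lemma:deltabarg<0} the hypothesis $|d(g)|<|a(g)|$ together with $h$-regularity forces $g\mid h$, so the reduced pair has index $h/g<h$.
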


\section{Effective non-vanishing}

We can now prove Conjecture \ref{conj:WCI} when $\text{codim}(X)\leq 3$.
\begin{theorem}
\label{thm:main}
    Let $X \subset \mathbb{P}$ be a well-formed quasi-smooth WCI which is not a linear cone, $\textup{codim}X \leq 3$ and $H$ an ample Cartier divisor such that $H-K_X$ is ample. Then, $|H| \neq \emptyset$.
\end{theorem}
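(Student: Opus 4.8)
The plan is to deduce this geometric statement from the purely numerical Theorem~\ref{thm:mainpairs} (equivalently Corollary~\ref{cor:maincor}), via the dictionary between weighted complete intersections and $h$-regular pairs set up in Sections~\ref{section:preliminaries} and~\ref{section:pairs}. First I would reduce to the case $\dim X \ge 3$: for $\dim X \le 2$ the Ambro--Kawamata conjecture is already known (\cite{kaw00} settles it for surfaces, and the lower-dimensional cases are classical), so nothing new is needed there. Assuming $\dim X \ge 3$ and writing $c = \mathrm{codim}\,X \le 3$, $n = \dim\mathbb{P}$, the bound $\dim X = n - c \ge 3$ gives $n \ge c+3$, hence $c \le n$. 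In this range all parts of Property~\ref{property:WCI} apply: $\mathrm{Cl}(X)\cong\mathbb{Z}$ is generated by $\mathcal{O}_X(1)$, one has $K_X=\mathcal{O}_X(\delta)$ with $\delta=\delta(d;a)$, and $H^0(X,\mathcal{O}_X(k))\cong A_k$. Moreover, since $X$ is well-formed, quasi-smooth, not a linear cone, and $\dim X>2$, Property~\ref{property:WCI} and Corollary~\ref{corollary:h-regular} show that its degrees and weights $(d;a)$ form an $h$-regular pair, where $\mathcal{O}_X(h)$ generates $\mathrm{Pic}(X)$.

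Next I would perform the translation into semigroup language. As $\mathrm{Pic}(X)$ is generated by $\mathcal{O}_X(h)$ and $H$ is Cartier, $H=\mathcal{O}_X(k)$ for some integer $k$ divisible by $h$; ampleness of $H$ forces $k>0$, and ampleness of $H-K_X=\mathcal{O}_X(k-\delta)$ forces $k>\delta$. By Property~\ref{property:WCI}(iv), together with the equivalence between Conjecture~\ref{conj:kawamata} for quasi-smooth WCIs and Conjecture~\ref{conj:WCI}, proving $|H|\ne\emptyset$ amounts to showing that $k$ lies in the numerical semigroup $\langle a_0,\dots,a_n\rangle$; that is, it suffices to verify Conjecture~\ref{conj:WCI} for $X$. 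Here well-formedness guarantees $\gcd(a_0,\dots,a_n)=1$, so the $\tfrac1h$-Frobenius number $F^h(a_0,\dots,a_n)$ is finite.

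Finally I would split according to whether some weight divides $h$. If $a_{i_0}\mid h$ for some $i_0$, then $a_{i_0}\mid k$ and $k>0$, so $k=(k/a_{i_0})\,a_{i_0}\in\langle a_0,\dots,a_n\rangle$ at once, with no further input. Otherwise $a_i\nmid h$ for all $i$, and $(d;a)$ satisfies every hypothesis of Theorem~\ref{thm:mainpairs} (namely $c\le 3$, $c\le n$, and $a_i\nmid h$), so $\delta\ge F^h(a_0,\dots,a_n)$. Since $F^h$ is by definition the largest multiple of $h$ outside $\langle a_0,\dots,a_n\rangle$, and $k$ is a multiple of $h$ with $k>\delta\ge F^h$, we conclude $k\in\langle a_0,\dots,a_n\rangle$, whence $H^0(X,H)\ne 0$.

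The genuine mathematical content is concentrated entirely in Theorem~\ref{thm:mainpairs}, whose proof is the delicate case-by-case Frobenius-number analysis; granting it, the argument above is essentially a reduction. The points that require care are the dimension bookkeeping that secures $c\le n$ (needed both for the Picard-group description and for the hypotheses of Theorem~\ref{thm:mainpairs}) and the recognition that the case $a_{i_0}\mid h$, which falls outside the scope of Theorem~\ref{thm:mainpairs}, is in fact immediate and must be disposed of separately.
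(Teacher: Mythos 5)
Your proposal is correct and follows essentially the same route as the paper: the paper deduces Theorem~\ref{thm:main} (stated again as Corollary~\ref{cor:kawamata}) directly from the numerical result Corollary~\ref{cor:maincor} via the equivalence of Conjecture~\ref{conj:kawamata} for quasi-smooth WCIs with Conjecture~\ref{conj:WCI}, exactly the dictionary you use. In fact you supply details the paper leaves implicit --- the reduction to $\dim X\ge 3$ (hence $c\le n$) and the separate, immediate treatment of the case where some weight divides $h$, which indeed falls outside the hypotheses of Theorem~\ref{thm:mainpairs} --- and both steps are handled correctly.
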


As in the Fano and Calabi-Yau case of \cite{tasin}, this is done by proving the conjecture in the more general setting of $h$-regular pairs. Thanks to Proposition \ref{prop:noasterisk}, we only need to consider the regular case, that is Conjecture \ref{conjecture:frob}. 
\\

\begin{theorem}[cf. {\cite[Proposition 6.2]{tasin}}]
\label{thm:codim1}
Let $(d;a)=(d_1;a_0,\ldots,a_n)$ be a $h$-regular pair, $n \geq 1$, and suppose $a_i \nmid h$ for all $i$. Then, $\delta(d;a) \geq F^h(a_0,\ldots,a_n).$
\end{theorem}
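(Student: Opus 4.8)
The plan is to reduce to the regular case $h=1$ and then dispatch it by an elementary divisibility argument. Since $|d|=1\le 3$, the hypotheses of Lemma \ref{lemma:reduction} are available via Proposition \ref{prop:noasterisk}, and Corollary \ref{cor:noasterisk} tells us that Conjectures \ref{conjecture:frob} and \ref{conjecture:h-frob} are equivalent for pairs of codimension at most $3$; concretely, I would induct on $h$, at each step using Lemma \ref{lemma:reduction}\ref{lemma:deltabarp<0} to pass from an $h$-regular pair to an $h/p$-regular one with the same inequality to prove. One checks that every pair produced this way from a codimension-$1$ pair again has codimension $\le 1$, so the induction stays within codimension $1$ and bottoms out at $h=1$. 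Thus it suffices to prove the statement for a \emph{regular} pair $(d_1;a_0,\ldots,a_n)$ with $a_i\ge 2$ for all $i$.

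Next I would extract the structure of a codimension-$1$ regular pair from Definition \ref{def:pair} with $h=1$. For a singleton $I=\{i\}$ (so $a_I=a_i>1$), alternative (ii) reads $a_i\mid 1$ and is impossible, so alternative (i) must hold; since $c=1$ this says exactly $a_i\mid d_1$. For $|I|\ge 2$, alternative (i) would require at least two distinct degrees, impossible when $c=1$, so (ii) forces $a_I=1$. Hence the weights $a_0,\ldots,a_n$ are pairwise coprime and each divides $d_1$.

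The conclusion is then immediate. Pairwise coprimality gives $\lcm(a_0,\ldots,a_n)=\prod_{i=0}^n a_i$, and since each $a_i\mid d_1$ we get $\prod_i a_i\mid d_1$, so $d_1\ge \prod_i a_i$. On the other hand, as $\gcd(a_0,a_1)=1$ we have $\langle a_0,a_1\rangle\subseteq\langle a_0,\ldots,a_n\rangle$, whence
$$F(a_0,\ldots,a_n)\le F(a_0,a_1)=a_0a_1-a_0-a_1\le \prod_{i=0}^n a_i-\sum_{i=0}^n a_i,$$
the last inequality being the routine estimate $a_0a_1\big(\prod_{i\ge 2}a_i-1\big)\ge\sum_{i\ge 2}a_i$, which follows from $a_i\ge 2$. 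Combining these,
$$\delta(d;a)=d_1-\sum_{i=0}^n a_i\ \ge\ \prod_{i=0}^n a_i-\sum_{i=0}^n a_i\ \ge\ F(a_0,\ldots,a_n)=F^1(a_0,\ldots,a_n),$$
which is the desired bound in the regular case, and hence (after the reduction) in general.

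The conceptual crux is the reduction to $h=1$, which I delegate to Proposition \ref{prop:noasterisk} and Lemma \ref{lemma:reduction}; the one point requiring care is that, for a genuinely non-regular codimension-$1$ pair, some prime $p\mid h$ divides $d_1$, so that $\bar\delta(p)\le 0$ and Lemma \ref{lemma:reduction}\ref{lemma:deltabarp<0} indeed applies. This holds because any common factor of two weights divides $h$ by regularity and divides $d_1$ since every weight does. Once the reduction is in place the regular case is elementary, the only estimate to verify being $F(a_0,\ldots,a_n)\le\prod_i a_i-\sum_i a_i$; note also that $\delta(d;a)>0$ here by Proposition \ref{prop:delta>0}, consistent with the general-type setting.
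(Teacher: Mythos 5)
Your proof is correct and follows essentially the same route as the paper: reduce to the regular case via Proposition \ref{prop:noasterisk} and Lemma \ref{lemma:reduction}, then observe that a regular codimension-one pair has pairwise coprime weights each dividing $d_1$, so $d_1\geq\prod_i a_i$ and $\delta(d;a)\geq F(a_i,a_j)\geq F(a_0,\ldots,a_n)$. You merely make explicit the elementary estimate $a_0a_1-a_0-a_1\leq\prod_i a_i-\sum_i a_i$ and the choice of prime $p\mid h$ dividing $d_1$, both of which the paper leaves implicit.
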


\begin{proof}
For pairs coming from quasi-smooth weighted complete intersections, the statement was already proved in \cite[Proposition 6.2]{tasin}. In general, we notice that it follows directly from Proposition \ref{prop:noasterisk} by reducing to the regular case: in fact, for a regular pair $(d;a)=(d_1;a_0,\ldots,a_n)$ of codimension 1, it is easy to see that Conjecture \ref{conjecture:frob} holds, as all weights must be pairwise coprime, hence $d_1 \geq \prod a_i$ and in particular $\delta(d;a) \geq F(a_i,a_j)$ for all $a_i,a_j \in a$.
\end{proof}

For pairs of codimension 1, we can actually obtain a stronger result, which gives an estimate of $F(a_0,\ldots,a_n)$.

\begin{theorem}
\label{thm:codim1-2}
    Let $(d;a)=(d_1;a_0,\ldots,a_n)$ be a $h$-regular pair, $n \geq 1$, and suppose $a_i \nmid h$ for all $i$. Then, $\delta(d;a) \geq F(a_0,\ldots,a_n).$
\end{theorem}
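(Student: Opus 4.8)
The plan is to reduce the statement to a purely combinatorial estimate on the least common multiple of the weights. First I would record the two structural consequences of $h$-regularity in codimension one. Applying the definition to each singleton $I=\{i\}$, the hypothesis $a_i\nmid h$ rules out option (ii), so option (i) forces $a_i\mid d_1$ for every $i$; hence $d_1\geq \lcm(a_0,\ldots,a_n)$ and $\delta(d;a)\geq \lcm(a_0,\ldots,a_n)-\sum_i a_i$. Applying it to a pair $I=\{i,j\}$ shows that a prime $q\nmid h$ can divide at most one weight, since otherwise $q\mid a_I\mid h$; so the only primes shared by two or more weights are those dividing $h$.

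Next I would dispose of the easy cases. If $g:=\gcd(a_0,\ldots,a_n)>1$ then, taking $I=\{0,\ldots,n\}$, $h$-regularity gives $g\mid h$; dividing all weights and $d_1$ by $g$ (possible since $g\mid a_0\mid d_1$) multiplies both $\delta(d;a)$ and, by the scaling convention for the Frobenius number, $F(a_0,\ldots,a_n)$ by $g$, and by Corollary \ref{cor:h-prop} produces an $h/g$-regular pair with coprime weights. Thus I may assume $\gcd(a_0,\ldots,a_n)=1$. If the pair is moreover regular the weights are pairwise coprime, $d_1\geq \prod_i a_i$, and the computation already carried out in the proof of Theorem \ref{thm:codim1} gives $\delta(d;a)\geq F(a_i,a_j)\geq F(a_0,\ldots,a_n)$; so the real content is the genuinely $h$-regular case with $h>1$.

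By the first paragraph it now suffices to prove the clean inequality
$$F(a_0,\ldots,a_n)+\sum_{i=0}^n a_i\ \leq\ \lcm(a_0,\ldots,a_n)$$
for coprime weights satisfying the structural constraint above. I would attack this by induction on the number of weights, peeling off a single weight $a_0$ and setting $g=\gcd(a_1,\ldots,a_n)$, which is coprime to $a_0$ because $\gcd(a_0,g)=\gcd(a_0,\ldots,a_n)=1$. Lemma \ref{lemma:recfrob2g} applied to $I_1=\{0\}$ and $I_2=\{1,\ldots,n\}$, together with the identity $F^{g}(a_1,\ldots,a_n)=g\,F(a_1/g,\ldots,a_n/g)$ and the inductive bound $F(a_1/g,\ldots,a_n/g)+\sum_{i=1}^n a_i/g\leq \lcm(a_1/g,\ldots,a_n/g)$, reduces the statement after rearrangement to
$$a_0\,g+\lcm(a_1,\ldots,a_n)\ \leq\ \lcm(a_0,\ldots,a_n).$$
Equivalently, writing $e=\gcd\big(a_0,\lcm(a_1,\ldots,a_n)\big)$, the inequality to be proved is $a_0 g\leq \lcm(a_1,\ldots,a_n)\,(a_0/e-1)$.

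The main obstacle is precisely this last inequality, which can fail for arbitrary coprime weight sets: the configuration $(6,10,15)$ is the prototypical counterexample, where every pair shares a prime yet the lcm is too small. Hence the argument must use $a_i\nmid h$ in an essential way, and indeed $(6,10,15)$ is not a legal weight set, since the shared primes $2,3,5$ would all divide $h$, forcing $6\mid h$. More generally the structural constraint forces each weight either to carry a prime not dividing $h$ (which then appears in no other weight) or to carry a prime power exceeding that in $h$; in both situations one expects to be able to remove a suitably chosen weight so that the lcm of the rest strictly drops, making $e<a_0$ and the displayed inequality hold. The crux of the proof is therefore to show that a valid weight set always contains such an \emph{lcm-irredundant} weight to peel off, and to make the resulting gain quantitative. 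The degenerate small cases — a single coprime pair, or a group all of whose weights coincide — must be checked by hand, since there the bound is attained with equality and the naive induction step is lossy.
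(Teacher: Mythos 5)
Your overall strategy coincides with the paper's: reduce to $\gcd(a_0,\ldots,a_n)=1$, induct on the number of weights by splitting off one weight from the rest, and combine the inductive hypothesis with a recursive Frobenius bound (you invoke Lemma \ref{lemma:recfrob2g}, the paper uses Lemma \ref{lemma:recfrob1}; after your rearrangement both lead to the same target). The problem is that you stop exactly at the decisive step: you correctly reduce everything to the inequality $a_0\,g+\lcm(a_1,\ldots,a_n)\leq \lcm(a_0,\ldots,a_n)$ with $g=\gcd(a_1,\ldots,a_n)$, note that it fails for arbitrary coprime tuples such as $(6,10,15)$, and then only gesture at why the hypotheses should rescue it (``one expects to be able to remove a suitably chosen weight\ldots''). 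Since that inequality is the entire content of the induction step, the proof as written has a genuine gap. A contributing cause is that the structural constraint you extract in your first paragraph --- a prime not dividing $h$ lies in at most one weight --- is too weak: it does not even rule out $a_0\mid\lcm(a_1,\ldots,a_n)$, i.e.\ $e=a_0$, which is precisely the degenerate case where your inequality collapses.

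The fix is the observation the paper isolates at the start of its proof. For $|I|=2$ option (i) of $h$-regularity is unavailable (there is only one degree), so $\gcd(a_i,a_j)\mid h$ for all $i\neq j$; combined with $a_i\nmid h$ this shows that every weight $a_i$ carries a prime power $p_i^{l_i}$ dividing $a_i$ but dividing no other weight --- the prime-power version you allude to vaguely at the end, and the one you actually need. Granting it, your target inequality is immediate by the same two-line argument the paper uses for $d_1\geq\lcm(a_0,\ldots,a_{n-1})+\gcd(a_0,\ldots,a_{n-1})\,a_n$: both $L'=\lcm(a_1,\ldots,a_n)$ and $a_0g$ divide $L=\lcm(a_0,\ldots,a_n)$ (the latter since $\gcd(a_0,g)=1$), and both are \emph{proper} divisors, because $v_{p_0}(L')<l_0$ and $v_{p_1}(a_0g)=\max\bigl(v_{p_1}(a_0),v_{p_1}(g)\bigr)<l_1$. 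Hence each is at most $L/2$ and their sum is at most $L\leq d_1$. With that step supplied, your argument closes and is essentially the paper's; without it, the theorem is not proved.
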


\begin{proof}
    We can assume that $\gcd(a_0,\ldots,a_n)=1$. We proceed by induction on $n$. The case $n=1$ follows from Proposition \ref{thm:codim1},
     so suppose that the statement holds for any pair of dimension at most $n-1$. Note that for every weight $a_i$ there exists a prime $p_i$ and a positive integer $l_i$ such that $p_i^{l_i} \mid a_i$ but $p_i^{l_i} \nmid a_j$ for all $j \neq i$, otherwise we would have that $a_i \mid h$. Because of this, we deduce that $[a_0,\ldots,a_{n-1}] \coloneqq \lcm(a_0,\ldots,a_{n-1}) \neq d_1$ and $[a_n,g]=ga_n \neq d_1$, where $g=\gcd(a_0,\ldots,a_{n-1})$. Then, consider the pairs $(d';a')=([a_0,\ldots,a_{n-1}]; a_0,\ldots,a_{n-1})$ and $(d'';a'')=([a_n,g];a_n,g)$. Since both satisfy the induction hypotheses, we have that $\delta(d';a')\geq F(a_0,\ldots,a_{n-1})$ and $\delta(d'';a'') \geq F(a_n,g)$. Then,
     $$\delta(d;a)\geq \delta(d';a')+\delta(d'';a'')+g \geq F(a_0,\ldots,a_n)$$
     by Lemma \ref{lemma:recfrob1}.
\end{proof}

\begin{theorem}
\label{thm:codim2}
Let $(d;a) = (d_1,d_2; a_0,\ldots,a_n)$, $n \geq 2$ be a $h$-regular pair such that $a_i \nmid h$ for every $i$. Then, $\delta(d;a) \geq F^h(a_0,\ldots,a_n)$.
\end{theorem}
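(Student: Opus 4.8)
The plan is to first collapse the $h$-regular statement to the regular one, and then to observe that once the two degrees are made coprime, the weights are forced to be pairwise coprime, which trivialises the problem. Since $|d|=2\leq 3$, Corollary \ref{cor:noasterisk} tells us that Conjecture \ref{conjecture:h-frob} for $(d;a)$ is equivalent to Conjecture \ref{conjecture:frob}: concretely one induses on $h$, using Proposition \ref{prop:noasterisk} to see that the pair always satisfies one of the conditions of Lemma \ref{lemma:reduction}, so the $h$-regular case descends to some $h'<h$ and eventually to $h=1$. Thus it suffices to prove $\delta(d;a)\geq F(a_0,\ldots,a_n)$ for a regular pair $(d_1,d_2;a_0,\ldots,a_n)$ with $c=2\leq n$ and all $a_i>1$. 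Next, by the reduction to degrees with no common factor (the proposition following Lemma \ref{lemma:primered}), I would further assume $\gcd(d_1,d_2)=1$.

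The structural heart of the argument is that coprimality of the degrees forces the weights to be pairwise coprime. Testing regularity on a singleton $\{i\}$ shows each $a_i>1$ divides $d_1$ or $d_2$, and it cannot divide both, since that would give $a_i\mid\gcd(d_1,d_2)=1$; hence every weight divides \emph{exactly one} degree. Testing regularity on a pair $\{i,j\}$ with $\gcd(a_i,a_j)=g>1$ would force $g\mid d_1$ and $g\mid d_2$, i.e.\ $g\mid\gcd(d_1,d_2)=1$, a contradiction. Writing $S_1=\{i:a_i\mid d_1\}$ and $S_2=\{i:a_i\mid d_2\}$, pairwise coprimality upgrades the divisibilities to $\prod_{i\in S_1}a_i\mid d_1$ and $\prod_{i\in S_2}a_i\mid d_2$, so $d_1\geq P_1:=\prod_{i\in S_1}a_i$ and $d_2\geq P_2:=\prod_{i\in S_2}a_i$.

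To conclude I would compare with the codimension-one estimate. There are $n+1\geq 3$ weights, so one block, say $S_1$, has $|S_1|\geq 2$; pairwise coprimality then gives $\gcd_{i\in S_1}a_i=1$, whence $\langle a_{S_1}\rangle\subseteq\langle a_0,\ldots,a_n\rangle$ yields $F(a_0,\ldots,a_n)\leq F(a_{S_1})$. The pair $(P_1;a_{S_1})$ is a regular codimension-one pair, so Theorem \ref{thm:codim1-2} gives $P_1-\sum_{i\in S_1}a_i\geq F(a_{S_1})$, while $P_2-\sum_{i\in S_2}a_i\geq 0$ because a product of integers $\geq 2$ dominates their sum (the inequality being trivial when $|S_2|\leq 1$). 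Combining, and using the partition $\sum_i a_i=\sum_{S_1}a_i+\sum_{S_2}a_i$,
$$\delta(d;a)=d_1+d_2-\sum_{i}a_i\geq\Big(P_1-\sum_{S_1}a_i\Big)+\Big(P_2-\sum_{S_2}a_i\Big)\geq F(a_{S_1})\geq F(a_0,\ldots,a_n).$$

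Each individual step is short, so in codimension two the work is concentrated in the reduction to $\gcd(d_1,d_2)=1$: it is precisely this move that dissolves the potentially intricate shared-factor structure of the weights — exemplified by tangled configurations such as $6,10,15$, where both degrees must absorb every common prime and the bound is already tight — into the tame pairwise-coprime situation. The only points demanding care are the degenerate cases $|S_2|\in\{0,1\}$ and the routine verification that $(P_1;a_{S_1})$ meets the hypotheses of Theorem \ref{thm:codim1-2}.
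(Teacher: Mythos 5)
Your proposal is correct and follows essentially the same route as the paper: reduce to a regular pair via Proposition \ref{prop:noasterisk} and Lemma \ref{lemma:reduction}, then to coprime degrees (hence pairwise coprime weights) via Lemma \ref{lemma:primered}, split the weights according to which degree they divide, and apply the codimension-one theorem to the block containing at least two weights while the other block contributes non-negatively. The only cosmetic difference is that you pass through the products $P_1,P_2$ and invoke Theorem \ref{thm:codim1-2} for $(P_1;a_{S_1})$, where the paper applies Theorem \ref{thm:codim1} directly to $(d_1;a_0,\ldots,a_k)$ and simply notes $\delta(d'';a'')>0$ for the second block.
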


\begin{proof}

For pairs coming from weighted complete intersections, this was proved in \cite[Theorem 1.2]{jy24}. In general, by Proposition \ref{prop:noasterisk}, we can suppose that $(d;a)$ is a regular pair. Then, by Lemma \ref{lemma:primered} we can reduce to the case of $(a_i,a_j)=1$ for every $0\leq i,j \leq n$, $i \neq j$. Up to a permutation of the weights, suppose $a_0,\ldots,a_k \mid d_1$, $a_{k+1},\ldots,a_n \mid d_2$, with $k>1$. Both the pairs $(d';a') = (d_1;a_0,\ldots,a_{k})$ and $(d'';a'')=(d_2;a_{k+1},\ldots,a_n)$ are regular of codimension 1, thus $\delta(d'';a'')>0$. Then, by Proposition \ref{thm:codim1}, $$\delta(d;a) \geq \delta(d';a') \geq F(a_0,\ldots,a_k) \geq F(a_0,\ldots,a_n).$$
\end{proof}

\begin{theorem}
\label{thm:codim3}
Let $(d;a) = (d_1,d_2,d_3; a_0,\ldots,a_n)$ be a $h$-regular pair such that $n\geq 3$ and $a_i \nmid h$ for all $i$. Then, $\delta(d;a) \geq F^h(a_0,\ldots,a_n)$.
\end{theorem}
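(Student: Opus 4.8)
The plan is to peel the statement down to the \emph{regular} case and then run a structural case analysis driven by the codimension-$1$ and codimension-$2$ theorems together with the recursive Frobenius bounds. First I would apply Corollary~\ref{cor:noasterisk} (built on Proposition~\ref{prop:noasterisk}): for codimension $3$ it makes Conjecture~\ref{conjecture:h-frob} equivalent to Conjecture~\ref{conjecture:frob}, so it suffices to treat a regular pair $(d_1,d_2,d_3;a_0,\ldots,a_n)$ with every $a_i>1$, with $\gcd(a_0,\ldots,a_n)=1$ and $n\geq 3$. Then, exactly as in the codimension-$2$ proof, I would use Lemma~\ref{lemma:primered} and the reduction it supports to assume in addition that $\gcd(d_1,d_2,d_3)=1$.

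With these reductions in force I would record the structural consequences of regularity. Applying regularity to singletons shows every weight divides some $d_s$; applying it to pairs shows that whenever $g=\gcd(a_i,a_j)>1$ two of the three degrees are divisible by $g$. Combined with $\gcd(d_1,d_2,d_3)=1$ this yields two facts that drive everything: no prime divides three of the weights (so any three weights are coprime and only \emph{pairs} of weights can share a common factor), and if $g=\gcd(a_i,a_j)>1$ divides exactly $d_s,d_t$ then each of $a_i,a_j$ already divides $d_s$ or $d_t$ (else $g$ would divide the third degree too). Thus the configuration is recorded by a graph on the weights whose edges are labelled by one of the three pairs $\{d_1,d_2\}$, $\{d_1,d_3\}$, $\{d_2,d_3\}$, and I would organise the proof by how many of these ``degree-pairs'' actually occur.

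If no edge occurs the weights are pairwise coprime, each divides a unique degree, and since $n+1\geq 4$ weights meet only $3$ degrees some degree $d_s$ is divided by at least two of them; the codimension-$1$ sub-pair they form has $\gcd=1$, so Theorem~\ref{thm:codim1-2} bounds its amplitude below by its Frobenius number, which dominates $F(a_0,\ldots,a_n)$ by monotonicity (Lemma~\ref{lemma:frobineq}), the other degrees contributing a nonnegative amount. If exactly one degree-pair $\{s,t\}$ occurs I would split the weights into those dividing $d_s$ or $d_t$ and those dividing only the third degree $d_u$, so that $\delta$ is the sum of the amplitudes of a codimension-$2$ sub-pair on $d_s,d_t$ (bounded by Theorem~\ref{thm:codim2}) and a codimension-$1$ sub-pair on $d_u$ (bounded by Theorem~\ref{thm:codim1-2}), again finishing by monotonicity. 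When two or three degree-pairs occur I would instead feed the labelled graph into Lemma~\ref{lemma:recfrob2g} (or Lemma~\ref{lemma:recfrob1g}), writing $F(a_0,\ldots,a_n)$ as a Frobenius number of the shared-factor values plus the group correction terms plus the $\tfrac1{g}$-Frobenius numbers of the groups, and charging each of the three degrees to one group via the lower-codimension theorems.

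The hard part is the bookkeeping at the boundary of these cases, namely configurations in which the natural sub-pair fails the hypothesis $c\leq n$: for instance an active degree-pair $\{s,t\}$ whose only weights are the two sharing the factor, where Theorem~\ref{thm:codim2} does not apply and $\delta(d_s,d_t;\cdot)$ must be estimated by hand. In such situations the decomposition above loses an additive term of size $g$ (precisely the correction $g$, resp.\ $gG$, appearing in Lemmas~\ref{lemma:recfrob1} and~\ref{lemma:recfrob1g}), and the argument has to be rerouted through a different degree---absorbing the loss into the surplus of an untouched degree, into the slack $\delta\geq c$ of Lemma~\ref{lemma:deltabound}, or into Frobenius monotonicity after discarding the offending pair. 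I expect the genuinely delicate case to be three active degree-pairs, where all three degrees are simultaneously entangled in shared factors and one must choose the recursive splitting so that each degree is charged exactly once; this is where the case-by-case analysis concentrates.
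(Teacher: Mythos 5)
Your framework coincides with the paper's: reduce to the regular case via Proposition \ref{prop:noasterisk}, use Lemma \ref{lemma:primered} to forbid a common prime factor of all three degrees (equivalently, of any three weights), and then combine the codimension-$1$ and $2$ theorems with the recursive bounds of Lemmas \ref{lemma:recfrob1} and \ref{lemma:recfrob1g} in a case analysis. Your organisation by the set of ``active degree-pairs'' differs from the paper's induction on $k=\min\{|A_j| : |A_j|>1\}$, and the cases you actually close (all weights pairwise coprime; a single active pair supported on at least three weights) are handled correctly and essentially as in the paper's cases $k=2$ with $g=1$ and $g>1$.

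The gap is that the configurations you flag as ``to be estimated by hand'' are not a boundary nuisance but the core of the theorem, and your proposed remedies (``absorb the loss into the surplus of an untouched degree'', ``the slack $\delta\geq c$'') do not suffice as stated. Two concrete instances. First, when $n=3$, $g=\gcd(a_0,a_1)>1$ and $ga_2=d_1$, there is no untouched degree carrying a surplus of size $g$: the paper has to split into three further subcases according to which of $d_2,d_3$ the numbers $g$, $a_0/g$, $a_1/g$, $a_3$ divide, and in one of them ($g=d_3$) the inequality is rescued only by noticing that $d_3-a_0\le 0$ and passing to the subpair $(d_1,d_2;a_1,a_2,a_3)$ — a reduction that does not appear in your scheme. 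Second, when $[a_0,a_1]=d_1$ with $k\geq 4$ weights dividing $d_1$, the needed inequality $d_1\ge a_0+a_1+d_1/(qg)$ is not a consequence of any of your cited lemmas; the paper derives it from the unit-fraction bound $\tfrac1{g_0}+\tfrac1{g_1}+\tfrac1{qg}\le 1$, whose failure is excluded by a separate arithmetic argument (it would force a weight to divide another weight, against the adopted convention). Similarly, the subcase $k=4$, $[a_0,a_1]\ne d_1$ requires showing $gg'\le d_1/6$ so that $d_1\ge[a_0,a_1]+[a_2,a_3]+gg'$ feeds into Lemma \ref{lemma:recfrob1g}, and the subcase where all three pairwise lcms of $a_0,a_1,a_2$ equal $d_1$ is resolved by the identity $a_0=\gcd(a_0,a_1)\gcd(a_0,a_2)$, which forces one of these weights to divide $d_2$ or $d_3$. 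None of these ad hoc arithmetic inputs is present or replaceable by the generic ``rerouting'' you describe, so the proposal is a correct plan whose decisive steps are missing.
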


\begin{proof}

We first set the assumptions on the pairs we consider, and define the notations that are used in the proof.
As in codimension 1 and 2, we can assume that $(d;a)$ is regular by Proposition \ref{prop:noasterisk}. We can suppose that $d_i \neq a_j$ for any $i,j$ (otherwise this reduces to the case of codimension 2) and by Lemma \ref{lemma:primered} we can restrict to the case $\gcd(a_{i_1},a_{i_2},a_{i_3})=1$ for all distinct $i_1,i_2,i_3$. 
For any degree $d_j\in d$, let  $A_j=\{a_i\in a \colon  a_i\mid d_j\}$ be the set of weights dividing $d_j$. Define the pairs $(d';a')=(d_2,d_3;a_l,\ldots,a_n)$ where $a_i,\ldots,a_{l-1} \in A_1$ and $a_l,\ldots,a_n \notin A_1$, and if $|A_j|=2$, let $(d'';a'')=(d_2,d_3;a_2,\ldots,a_n,g)$, where $g=\gcd(a_0,a_1)$. By our assumptions, both $(d';a')$ and $(d'';a'')$ are $d_1/m$-regular, where $m=\lcm\{a_i \in A_1\}$: in fact, since no three weights share a common factor, any common factor of two weights among $a_l,\ldots,a_n$ cannot divide $m$; for $(d'';a'')$, as $(d;a)$ is regular, $g$ also divides either $d_2$ or $d_3$. We write $\delta'=\delta(d';a')$ and $\delta''=\delta(d'';a'')$. Also, $\delta(d';a')>0$ by Proposition \ref{prop:delta>0} because $a_l,\ldots,a_n \nmid d_1$.

We will use the convention that if $a_i \mid a_{i'}$ for some $i,{i'}$, then $a_i$ and $a_{i'}$ belong to distinct sets $A_j$ and $A_{j'}$, that is, the set $A_j$ give a partition of the weights of $a$. More precisely, even though $a_i$ and $a_{i'}$ must belong to at least one common $A_j$, since there must be another $A_{j'}$ such that $a_i \in A_{j'}$, we will say that $a_i \in A_{j'}$ and $a_{i'} \in A_j$, but $a_i \notin A_j$ and $a_{i'} \notin A_{j'}$. Note that the regularity of $(d';a')$ and $(d'';a'')$ is unchanged by this convention.
\\

We prove the statement by induction on $k=\min_j \{|A_j| \colon |A_j|>1\}$, and consider different cases for every $k$. Note that such a $k$ exists, because the pair is regular and since $|a|>|c|$ there must be two weights dividing the same degree. Without loss of generality, we will always assume that $k=|A_1|$, and that the weights belonging to $A_1$ are $a_0,\ldots,a_{k-1}$.
We first prove the statement under the assumption that $d_1=[a_0,\ldots,a_{k-1}]$, where $[a_0,\ldots,a_{k-1}]=\lcm(a_0,\ldots,a_{k-1})$, then show how the proof generalises to the (easier) case $d_1 >[a_0,\ldots,a_{k-1}]$.

$\boldsymbol{\textbf{Case }k=2.}$
\begin{itemize}

    \item{$\boldsymbol{g=\gcd(a_0,a_1)=1}$: then,
    $$\delta(d;a) = (d_1-a_0-a_1)+\delta'=([a_0,a_1]-a_0-a_1) + \delta' > F(a_0,a_1),$$
    and we are done because $a_0$ and $a_1$ are coprime.}

    \item{$\boldsymbol{g>1}$: write $\delta(d;a)=(d_1-a_0-a_1)+\delta(d'';a'')+g$. Since $|a''|=n>|d''|=2$, by Proposition \ref{thm:codim2} 
    $$\delta(d'';a'')\geq F(a_2,\ldots,a_n,g),$$
    therefore 
            \begin{equation*}
            \begin{split}
            \delta(d;a) &\geq ([a_0,a_1]-a_0-a_1)+\delta(d'';a'')+g \\
                        &\geq F(a_0,a_1)+F(a_2,\ldots,a_n,g)+g \geq F(a_0,\ldots,a_n)
            \end{split}
            \end{equation*}
    by Lemma \ref{lemma:recfrob1}}.
\end{itemize}

$\textbf{Case }\boldsymbol{k=3.}$

\begin{itemize}

    \item{$\boldsymbol{[a_0,a_1]=[a_0,a_2]=[a_1,a_2]=[a_0,a_1,a_2]=d_1}$: in this case, since
    $$[a_0,a_1,a_2]=\frac{a_0a_1a_2}{\gcd(a_0,a_1)\gcd(a_0,a_2)\gcd(a_1,a_2)}$$
    and
    $$[a_i,a_j]=\frac{a_ia_j}{\gcd(a_i,a_j)},$$
    we get
    \begin{equation*}
        \begin{cases}
            &a_0=\gcd(a_0,a_1)\gcd(a_0,a_2)\\
            &a_1=\gcd(a_0,a_1)\gcd(a_1,a_2)\\
            &a_2=\gcd(a_0,a_2)\gcd(a_1,a_2)\\
        \end{cases}
     \end{equation*}
    Two among $\gcd(a_0,a_1),\gcd(a_0,a_2),\gcd(a_1,a_2)$ (which are $\neq 1$ by the convention on the weights) must divide one of $d_2$ or $d_3$, hence one of $a_0,a_1,a_2$ actually divides $d_2$ or $d_3$ as well, say $a_2$. Then, $(d'';a'')$ is regular and the proof follows as in the case $k=2$.}

    \item{$\boldsymbol{[a_0,a_1] \neq d_1:}$ we consider three subcases.

        \begin{itemize}
            \item{$\boldsymbol{g=\gcd(a_0,a_1)=1}$: then $d_1 \geq [a_0,a_1]+a_2$ because $a_2 \neq d_1$ by assumption, hence 
            \begin{equation*}
            \begin{split}
                \delta(d;a) &=(d_1-a_0-a_1-a_2)+\delta(d';a') \\ 
                            &\geq [a_0,a_1]-a_0-a_1 +\delta(d';a') > F(a_0,a_1).
            \end{split}
            \end{equation*}
                }

            \item{$\boldsymbol{g>1, ga_2 \neq d_1}$: in this case, $d_1 \geq [a_0,a_1]+ga_2$ and $g,a_2$ are coprime, hence
            \begin{equation*}
            \begin{split}
            \delta(d;a) &\geq ([a_0,a_1]-a_0-a_1)+(ga_2-a_2-g)+g +\delta(d';a') \\
                        &> F(a_0,a_1)+F(a_2,g)+g \geq F(a_0,a_1,a_2) \geq F(a_0,\ldots,a_n)
            \end{split}
            \end{equation*}
    
            by Lemma \ref{lemma:recfrob1}.}
\\

            \item{$\boldsymbol{g>1, ga_2=d_1}$: we first show that we can reduce to a pair with $n=3$. In fact, consider the pair $(d^*;a^*)=(d_2,d_3;a_3,\ldots,a_n,g)$. This pair is regular and if $|a^*|=n-1>2$, by Proposition \ref{thm:codim2} $\delta(d^*;a^*)\geq F(a_3,\ldots,a_n,g)$. Since $d_1 \geq [a_0,a_1]+a_2$, we get
            $$\delta(d;a)>F(a_0,a_1)+F(a_3,\ldots,a_n,g)+g \geq F(a_0,\ldots,a_n)$$ by Lemma \ref{lemma:recfrob1}. Then, suppose $n=3$ and write $g_i=a_i/g$ for $i=0,1$. We consider three more subcases.
            
            \begin{itemize}

                \item{$\boldsymbol{g,a_3 \mid d_2}$: since $d_1\geq [a_0,a_1]+a_2$,
                \begin{equation*}
                    \begin{split}
                    \delta(d;a) &> [a_0,a_1]-a_0-a_1+(ga_3-g-a_3)+g \\
                    &=F(a_0,a_1)+F(a_3,g)+g,
                    \end{split}
                \end{equation*} 
                $$$$
                and we get the result from Lemma \ref{lemma:recfrob1}.}

                \item{$\boldsymbol{a_3 \mid d_2, g\mid d_3, g_0\mid d_3}$: since $a_0=gg_0$, $a_0 \mid d_3$ and we can conclude by induction by noticing that
                $$\delta(d;a)=\delta(d_1;a_1,a_2)+\delta(d_2,d_3;a_0,a_3,g)+g$$
                and $(d_2,d_3;a_0,a_3,g)$ is regular, hence we can use Lemma   \ref{lemma:recfrob1}.}

                \item{$\boldsymbol{g_0,g_1,a_3 \mid d_2$, $g = d_3}$: the pair $(d^*;a^*)=(d_1,d_2;a_1,a_2,a_3)$ is still regular, and by induction
                $$\delta(d^*;a^*)\geq F(a_1,a_2,a_3).$$           
                Since $\delta(d;a)=\delta(d^*;a^*)+d_3-a_0$ and $d_3=g \mid a_0$, then $\delta(d;a)\geq F(a_1,a_2,a_3).$}
            \end{itemize}
        }
                
        \end{itemize}
    }
\end{itemize}

\textbf{Case } $\boldsymbol{k>3}.$
    \begin{itemize}

        \item{\textbf{All weights dividing $\boldsymbol{d_1}$ are pairwise coprime}: then, $(d_1;a_0,\ldots,a_{k-1})$ is regular and the statement follows directly from Proposition \ref{thm:codim1}.}

        \item{$\boldsymbol{\gcd(a_0,a_1)=g>1 \textbf{ and }[a_0,a_1] \neq d_1}$: then $d_1 \geq [a_0,a_1]+d_1/g$ and the pair $(d^*;a^*)=(d_1/g,d_2,d_3;a_2,\ldots,a_n,g)$ is again regular. If $k\geq 5$ (which implies $n\geq 4$), then $|a^*|\geq 4$ and we can use induction to say that 
        $$\delta(d^*;a^*)\geq F(a_2,\ldots,a_n,g),$$
        in which case
        $$\delta(d;a) \geq ([a_0,a_1]-a_0-a_1)+\delta(d^*;a^*)+g \geq F(a_0,\ldots,a_n)$$
        by Lemma \ref{lemma:recfrob1}.
        
        Otherwise $k=n+1=4$, and let $g'=\gcd(a_2,a_3)$; note that $[a_0,a_1]\leq d_1/g'$ (and $\neq d_1$ by hypothesis), $[a_2,a_3] \leq d_1/g\neq d_1$, and if $g'=1$ then the statement follows easily, because
        $$\delta(d;a) \geq ([a_2,a_3]-a_2-a_3)+([a_0,a_1]-a_0-a_1)+\delta(d';a') > F(a_2,a_3).$$ 
        Thus, we can suppose $g'>1$, which implies 
        $$[a_0,a_1]+[a_2,a_3]\leq \frac{d_1(g+g')}{gg'}\leq \frac{5}{6}d_1.$$
        If we can show that $gg' \leq \frac{1}{6}d_1$ we are done, because then
        $$d_1 \geq [a_0,a_1]+[a_2,a_3]+gg',$$ and
        $$\delta(d;a)\geq F(a_0,a_1)+F(a_2,a_3)+gg',$$
        hence $\delta(d;a)\geq F(a_0,a_1,a_2,a_3)$ by Lemma \ref{lemma:recfrob1g}. Since neither of $a_0$ and $a_1$ divides the other, there must be coprime numbers $q_0,q_1>1$ such that $a_0=gq_0,a_1=gq_1$ and $\gcd(a_2,g)=\gcd(a_3,g)=1$, thus $$gg'=[g,g']\leq \frac{d_1}{q_0q_1} \leq \frac{1}{6}d_1.$$
        Hence, $d_1 \geq [a_0,a_1]+[a_2,a_3]+gg'$ and we get the statement.}

        \item{$\boldsymbol{\gcd(a_0,a_1)=g>1 \textbf{ and }[a_0,a_1]=d_1}$:

         Write $a_0=gg_0$, $a_1=gg_1$ and $q=\gcd(g_0,g)\gcd(g_1,g)$ for $g_0,g_1>1$ such that $\gcd(g_0,g_1)=1$. Then
        $$a_0+a_1+\frac{d_1}{qg}=\frac{d_1}{g_1}+\frac{d_1}{g_0}+\frac{d_1}{qg}=d_1(\frac{1}{g_0}+\frac{1}{g_1}+\frac{1}{qg}).$$
        There are very few values of $g,g_0,g_1$ satisfying the previous assumptions and such that 
        $$\frac{1}{g_0}+\frac{1}{g_1}+\frac{1}{qg}>1,$$
        but they force at least one of $g_0$ and $g_1$ to be prime: for the inequality to hold, each of $g_0$, $g_1$ and $qg$ must be at most 5. This implies that, since any three weights are coprime, there must be one weight among $a_2,\ldots,a_{k-1}$ dividing $a_0$ or $a_1$, against our convention on the weights. Thus, we always have
        $$d_1 \geq a_0+a_1+\frac{d_1}{qg}.$$
        Since no two weights among $a_2,\ldots,a_{k-1}$ have a common factor (because $[a_0,a_1]=d_1$), $(d_1/qg;a_2,\ldots,a_{k-1})$ is regular, hence 
        $$\frac{d_1}{qg}-a_2-\ldots-a_{k-1} \geq F(a_2,\ldots,a_{k-1}).$$
        We can now use the fact that $d_1 \geq a_0+a_1+\frac{d_1}{qg}$ to obtain the statement. 
    
    }

        \end{itemize}

    This concludes the proof for $d_1=[a_0,\ldots,a_{k-1}]$. For $d_1>[a_0,\ldots,a_{k-1}]$, the same proofs still work verbatim except when the regularity of $(d''    ;a'')$ is used. But if $d_1 > [a_0,\ldots,a_{k-1}]$, then $d_1 \geq [a_0,\ldots,a_{k-1}]+mg$, where $g=\gcd(a_0,a_1)$ and $m=d_1/[a_0,\ldots,a_{k-1}]$. Then, a similar proof holds by Lemma \ref{lemma:recfrob1g}. For the reader's convenience, we give an example by showing how the case $k=2$ generalises.

    Suppose $g=\gcd(a_0,a_1)>1$ and $d_1>[a_0,a_1]$. Then $mg<d_1$ ($mg=d_1$ is excluded by our convention on the weights, as it corresponds to $a_0=a_1=g$); hence $d_1 \geq [a_0,a_1]+mg$ and
        $$\delta(d;a)> ([a_0,a_1]-a_0-a_1) + F^{m}(a_2,\ldots,a_n,g)+mg.$$
        The result now follows from Lemma \ref{lemma:recfrob1g}.

\end{proof}

Together with Lemma \ref{lemma:reduction} and Proposition \ref{prop:noasterisk}, we obtain the following.

\begin{corollary}
\label{cor:maincor}
For any $h$-regular pair $(d;a)=(d_1,\ldots,d_c;a_0,\ldots,a_n)$ such that $c \leq 3$, $c \leq n$ and $a_i \nmid h$ for all $0 \leq i \leq n$, $\delta(d;a) \geq F^h(a_0,\ldots,a_n)$. In particular, Conjecture \ref{conjecture:frob} holds for $c \leq 3$.
\end{corollary}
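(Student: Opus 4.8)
The plan is to recognise that this corollary is nothing more than the assertion of Conjecture \ref{conjecture:h-frob} restricted to pairs with at most three degrees, and that this assertion has already been established one codimension at a time. I would therefore argue by splitting on the value of $c=|d|$, reducing everything to the three theorems proved just above.

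First I would dispose of the degenerate possibility $c=0$. Under the hypothesis $a_i\nmid h$ no weight can equal $1$, since $1\mid h$ always; hence every singleton $I=\{i\}$ satisfies $a_I=a_i>1$. For such an $I$, $h$-regularity would force either a degree divisible by $a_i$ (impossible, as $\bar c=\emptyset$) or $a_i\mid h$ (excluded by hypothesis). So there are no admissible pairs with $c=0$, and the statement holds vacuously in that range.

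For $c\in\{1,2,3\}$ I would simply invoke the matching theorem. For $c=1$ the hypotheses $c\leq n$ (i.e.\ $n\geq 1$) and $a_i\nmid h$ are exactly those of Theorem \ref{thm:codim1}, giving $\delta(d;a)\geq F^h(a_0,\ldots,a_n)$; for $c=2$ the same alignment of hypotheses with Theorem \ref{thm:codim2} applies, and for $c=3$ with Theorem \ref{thm:codim3}. In each case the condition $c\leq n$ supplies precisely the lower bound $n\geq c$ on the number of weights demanded by the respective theorem. Since each of these theorems already internalises the passage from the $h$-regular to the regular case (through Proposition \ref{prop:noasterisk} together with Lemma \ref{lemma:reduction}), no additional reduction argument is needed at this stage.

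Finally, for the ``in particular'' clause I would observe that Conjecture \ref{conjecture:frob} is the special case $h=1$: a regular pair is by definition a $1$-regular pair, and the requirement $a_i\neq 1$ for all $i$ is equivalent to $a_i\nmid 1$, so every such pair falls under the corollary with $h=1$. At the level of the corollary itself there is essentially no obstacle; the entire substance of the argument resides in Theorems \ref{thm:codim1}--\ref{thm:codim3}, and the genuinely delicate part is the codimension-three case of Theorem \ref{thm:codim3}, where the recursive Frobenius bound of Lemma \ref{lemma:recfrob1g} must be combined with careful bookkeeping of which weights divide which degrees. The only thing that has to be verified here is that the hypotheses line up exactly with those three theorems, which they do.
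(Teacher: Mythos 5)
Your proof is correct and takes essentially the same approach as the paper, which deduces the corollary directly from Theorems \ref{thm:codim1}, \ref{thm:codim2} and \ref{thm:codim3} together with Proposition \ref{prop:noasterisk} and Lemma \ref{lemma:reduction} (the latter two already being absorbed into those theorems, as you note). Your explicit treatment of the vacuous case $c=0$ and of the specialisation $h=1$ for the ``in particular'' clause merely spells out details the paper leaves implicit.
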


\begin{corollary}[=Theorem \ref{thm:main}]
\label{cor:kawamata}
    Let $X\subset \mathbb{P}$ be a well-formed quasi-smooth WCI which is not a linear cone, such that $\textup{codim}X \leq 3$. Let $H$ be an ample Cartier divisor on $X$ such that $H-K_X$ is ample, then $|H| \neq \emptyset$.
\end{corollary}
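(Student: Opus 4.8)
The plan is to reduce the geometric statement to the numerical inequality of Corollary \ref{cor:maincor}, using the dictionary provided by Property \ref{property:WCI}.

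First I would dispose of the low-dimensional cases. A quasi-smooth WCI $X$ carries only cyclic quotient singularities arising from the $\mathbb{C}^*$-action, so $(X,0)$ is klt and Conjecture \ref{conj:kawamata} applies. When $\dim X \le 2$ the statement is already known: for $\dim X = 2$ it is the surface case of \cite{kaw00}, while for $\dim X \le 1$ it is elementary (Riemann--Roch forces $h^0(X,H)>0$ once $H-K_X$ is ample). Hence we may assume $\dim X = n - c \ge 3$; in particular $c \le n-3 \le n$, so the hypothesis $c \le n$ of Corollary \ref{cor:maincor} is automatic, while $c \le 3$ holds by assumption.

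Next I would set up the translation. Since $\dim X > 2$, Property \ref{property:WCI}(ii) gives $\mathrm{Cl}(X) \cong \mathbb{Z}$, generated by $\mathcal{O}_X(1)$, and $\mathrm{Pic}(X)$ is the subgroup generated by $\mathcal{O}_X(h)$ for the minimal $h>0$ making $\mathcal{O}_X(h)$ invertible. As $\mathcal{O}_X(1)$ is ample, the class $\mathcal{O}_X(m)$ is ample exactly when $m>0$. Being ample and Cartier, $H = \mathcal{O}_X(k)$ with $k>0$ and $h \mid k$. By Property \ref{property:WCI}(iii), $K_X = \mathcal{O}_X(\delta)$ with $\delta = \delta(d;a)$, so ampleness of $H-K_X = \mathcal{O}_X(k-\delta)$ yields $k>\delta$. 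Finally, Property \ref{property:WCI}(iv) identifies $H^0(X,H) \cong A_k$, which is nonzero precisely when $k$ lies in the numerical semigroup $\langle a_0,\ldots,a_n\rangle$; this is the equivalence with Conjecture \ref{conj:WCI} recorded in the text. Moreover, by the remark following Definition \ref{def:pair}, $(d;a)$ is an $h$-regular pair. It then suffices to prove $k \in \langle a_0,\ldots,a_n\rangle$.

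I would split this into two cases. If some weight satisfies $a_i \mid h$, then $a_i \mid k$ (as $h \mid k$), so $k = (k/a_i)\,a_i$ already lies in the semigroup. Otherwise $a_i \nmid h$ for every $i$, and $(d;a)$ meets all the hypotheses of Corollary \ref{cor:maincor} ($c \le 3$, $c \le n$, $a_i \nmid h$); hence $\delta(d;a) \ge F^h(a_0,\ldots,a_n)$. Since $k$ is a multiple of $h$ with $k > \delta \ge F^h(a_0,\ldots,a_n)$, and $F^h$ is by definition the largest multiple of $h$ outside the semigroup, we conclude $k \in \langle a_0,\ldots,a_n\rangle$. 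In either case $A_k \neq 0$, so $H^0(X,H) \neq 0$. The whole difficulty of the theorem is concentrated upstream in the numerical Corollary \ref{cor:maincor} (equivalently Theorem \ref{thm:codim3}); once that inequality is in hand, the argument here is a routine unwinding of the weighted-complete-intersection dictionary, the only care points being the separate treatment of weights with $a_i \mid h$ and the appeal to the surface case of \cite{kaw00} in low dimension.
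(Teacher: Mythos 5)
Your proof is correct and follows essentially the same route as the paper, which leaves this deduction implicit: translate via Property \ref{property:WCI} to Conjecture \ref{conj:WCI} and invoke Corollary \ref{cor:maincor}. Your explicit handling of the cases $\dim X\le 2$ and $a_i\mid h$ (which fall outside the hypotheses of Corollary \ref{cor:maincor}) fills in details the paper glosses over, and both are handled correctly.
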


From the point of view of numerical semigroups, we can interpret the previous results as a bound on Frobenius numbers.
\begin{corollary}

    Let $a_0,\ldots,a_n$ be coprime positive integers, then
    $$F(a_0,\ldots,a_n)\leq \delta(d;a),$$
    where $(d;a)=(d_1,\ldots,d_c;a_0,\ldots,a_n)$ is any regular pair such that $c \leq 3$ and $n \geq c$.
\end{corollary}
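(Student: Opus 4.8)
The plan is to read off this statement as the specialization of Corollary~\ref{cor:maincor} to the regular case $h=1$. First I would record the dictionary between the two formulations: for $h=1$ a pair is $h$-regular precisely when it is regular, the $\frac{1}{h}$-Frobenius number $F^1(a_0,\ldots,a_n)$ coincides with the ordinary Frobenius number $F(a_0,\ldots,a_n)$, and the hypothesis ``$a_i\nmid h$ for all $i$'' of Corollary~\ref{cor:maincor} becomes simply ``$a_i\neq 1$ for all $i$''. With this translation in place, the bulk of the statement is immediate: for any regular pair $(d;a)$ with $c\le 3$, $c\le n$ and every $a_i\neq 1$, Corollary~\ref{cor:maincor} yields $\delta(d;a)\ge F(a_0,\ldots,a_n)$ directly.

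The only point needing separate attention is the degenerate situation in which some weight equals $1$. Here I would invoke the convention recorded after Definition~\ref{def:frob}, namely that $F(a_0,\ldots,a_n)=-1$ as soon as some $a_i=1$, so that the desired inequality reduces to the lower bound $\delta(d;a)\ge -1$. To attack this I would pass to the subpair $(d;a')$ obtained by discarding all weights equal to $1$: this subpair is still regular, since a weight equal to $1$ lies in no subset $I$ with $\gcd_{i\in I}(a_i)>1$ and is therefore irrelevant to the regularity conditions. Writing $m+1$ for the number of unit weights, Lemma~\ref{lemma:deltabound} applied to $(d;a')$ gives $\delta(d;a')\ge c$, and since $\delta(d;a)=\delta(d;a')-(m+1)$ we obtain $\delta(d;a)\ge c-(m+1)$.

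The step I expect to be the main obstacle --- and the place where the hypotheses must be used with care --- is precisely this last estimate: concluding $\delta(d;a)\ge -1$ would require bounding the number of unit weights by $c$, which is \emph{not} forced by regularity together with $c\le n$ alone. Consequently I would read the corollary as a statement about genuine numerical semigroups, i.e.\ with generators $a_i>1$ as in Definition~\ref{def:frob}; under that standard convention the unit-weight case does not arise, and the corollary becomes exactly the $h=1$ instance of Corollary~\ref{cor:maincor}, requiring no further argument beyond the translation of notation above.
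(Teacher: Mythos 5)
Your proposal is correct and matches the paper, which states this corollary without any proof as the immediate $h=1$ specialization of Corollary \ref{cor:maincor} (regular $=$ $1$-regular, $F^1=F$, and ``$a_i\nmid 1$'' $=$ ``$a_i\neq 1$''). Your caution about unit weights is well founded --- the vacuously regular pair $(1;1,1,1)$ has $\delta=-2<-1=F(1,1,1)$ under the paper's convention, so the statement must indeed be read with all $a_i>1$ as in Definition \ref{def:frob}, exactly as you conclude.
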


\section{About the codimension of minimal pairs}
\label{section:remarks}

Part of the proof of Theorem \ref{thm:codim3} relies on the results of Section \ref{section:reduction} to reduce the number of types of pairs that need to be considered. With the same goal in mind, we spend a couple words on a different type of reduction that could be beneficial to understanding Conjecture \ref{conjecture:h-frob} in higher codimensions.
\\

The statement of Conjecture \ref{conjecture:h-frob} gives a lower bound on $\delta(d;a)$ which is independent of the degrees of the pair: then, in order to prove (or disprove) Conjecture \ref{conjecture:h-frob}, it is sufficient to consider $h$-regular pairs with fixed weights $a_0,\ldots,a_n$ and only check the statement on those pairs that are minimal in some sense. To make things clearer, we restrict ourselves to the case of $h=1$: notice that if Proposition \ref{prop:noasterisk} holds in any codimension, then the case $h>1$ can be ignored for the purposes of Conjecture \ref{conjecture:h-frob} by Proposition \ref{lemma:reduction}.

For any tuple of weights $a=(a_0,\ldots,a_n)$, let 
$$\Delta_a=\{\delta(d;a) \in \mathbb{Z} \mid (d;a)\text{ is a regular pair and }|d|<|a|\}$$ 
be the set of regular pairs with given weights $a_0,\ldots,a_n$ and $c\leq n$. We say that a pair $(d;a)$ such that $\delta(d;a)=\min \Delta_a$ is \emph{minimal} with respect to the weights $a_0,\ldots,a_n$. Note that if $a_i \neq 1$ for all $i$, then a minimal pair satisfies $\delta(d;a)>0$ by Lemma \ref{lemma:deltabound}. In general, it does not seem obvious which properties distinguish a minimal pair from a non-minimal one. A first simple observation is that for a regular pair, all the factors appearing in its degrees are necessary to its regularity. 
\begin{definition}
    A regular pair $(d;a)$ is \emph{reducible} if there exists a degree $d_i$ and a prime $p \mid d_i$ such that the pair $$(d';a')=(d_1,\ldots,d_i/p,\ldots,d_c;a_0,\ldots,a_n)$$
    is again regular. If a pair is not reducible, we say it is \emph{irreducible}.
\end{definition}
A minimal pair is necessarily irreducible: in fact, if a regular pair $(d;a)\in \Delta_a$ is reducible, then the pair$(d';a)$, obtained by replacing a reducible degree $d_i$ with $d_i/p$, satisfies $\delta(d';a) < \delta(d;a)$ and again $\delta(d';a')\in \Delta_a$. 
\\

It is instead less clear whether there is a relation between the minimality of a pair and its codimension. Since, on average, pairs in $\Delta_a$ which have more degrees also have smaller degrees, a naive guess is that a minimal pair in $\Delta_a$ should also have maximal codimension. While we are not able to fully confirm such statement, we point out two facts in support of this idea.

\begin{itemize}
    \item {Suppose that for a set of weights $a=(a_0,\ldots,a_n)$, a minimal pair $(d;a)$ has codimension $c=|d|<n$. Then, there is a pair $(d';a)$ of maximal codimension $c=n$ which is almost minimal: in fact, consider the pair $$(d';a)=(d_1,\ldots,d_c,1,\ldots,1;a_0,\ldots,a_n),$$
    where $d'$ has $n-c$ degrees equal to 1. Then, $\delta(d';a)=\delta(d;a)+n-c.$ Hence, even if $(d';a)$ is not a minimal pair, it is close to being one.}
    
\item{Suppose again that $(d;a)$ is a regular pair with $c<n$. If there is a prime $p$ such that $(d(p);a(p))$ is reducible, then there is a regular pair $(d';a)$ such that $\delta(d';a) < \delta(d;a)$ and $|d'|=c+1$. In fact, suppose that $d_1 \in d(p)$ and that in $d(p)$ it is reducible by some prime $q \neq p$ (that is, if we replace $d_1$ with $d_1/q$ the pair is still regular). Then, the pair $(d';a)=(d_1/p,d_1/q, d_2,\ldots,d_c;a_0,\ldots,a_n)$ is regular and $\delta(d';a) < \delta(d;a)$. To see that $(d';a)$ is still regular, let $g=\gcd(a_{i_1},\ldots,a_{i_k})>1$ for some weights $a_{i_1},\ldots,a_{i_k}$. We only need to check the cases of $p$ or $q$ dividing $g$. First, suppose $p \mid g$. Then, $a_{i_1},\ldots,a_{i_k}$ are weights of the pair $(d(p);a(p))$ as well; since $g$ divides at least $k$ degrees of $d(p)$ by assumption, $g$ divides $k$ degrees among $d_1/q, d_2,\ldots,d_c$. If $q \mid g$ and $p \nmid g$, $g$ still divides $d_1/p$, so it divides at least $k$ degrees among $d_1/g,d_2,\ldots,d_c$. Therefore, $(d';a)$ is also regular. This means that for $c<n$, if there exists some prime $p$ such that the pair $(d(p);a(p))$ is reducible, then $\delta(d;a)$ is not minimal.}
\end{itemize}

Based on the previous observations, we conclude with the following questions.
\begin{question}
\hfill
\begin{itemize}
    
\item{
    For any regular pair $(d;a)$ with $|d|<|a|-1$, is there a prime $p$ such that the pair $(d(p);a(p))$ is reducible?}
\item{If the answer to the previous question is negative, for a given set of weights $a=(a_0,\ldots,a_n)$ is there a minimal pair $(d;a)$ such that $|d|=|a|-1$? }
\end{itemize}
\end{question}

And based on the results of Section \ref{section:reduction}, we ask the following.

\begin{question}
    Are Conjectures \ref{conjecture:frob} and \ref{conjecture:h-frob} equivalent?
\end{question}

\providecommand{\bysame}{\leavevmode\hbox to3em{\hrulefill}\thinspace}
\providecommand{\MR}{\relax\ifhmode\unskip\space\fi MR }
\providecommand{\MRhref}[2]{
  \href{http://www.ams.org/mathscinet-getitem?mr=#1}{#2}
}
\providecommand{\href}[2]{#2}

\end{document}